\numberwithin{equation}{section}
\newtheorem{theorem}{Theorem}[section]
\newtheorem{proposition}[theorem]{Proposition}
\newtheorem{corollary}[theorem]{Corollary}
\newtheorem{definition}[theorem]{Definition}
\newtheorem{remark}[theorem]{Remark}
\newtheorem{thm}[theorem]{Theorem}
\newtheorem{lem}[theorem]{Lemma}
\newtheorem{prop}[theorem]{Proposition}
\newtheorem{cor}[theorem]{Corollary}
\newtheorem{rem}[theorem]{Remark}
\newcommand{\mc}[1]{{\mathcal #1}}
\newcommand{\mf}[1]{{\mathfrak #1}}
\newcommand{\mb}[1]{{\boldsymbol #1}}
\newcommand{\bb}[1]{{\mathbb #1}}
\newcommand{\<}{\langle}
\renewcommand{\>}{\rangle}
\providecommand{\leftsquigarrow}{%
  \mathrel{\mathpalette\reflect@squig\relax}%
}
\newcommand{\reflect@squig}[2]{%
  \reflectbox{$\m@th#1\rightsquigarrow$}%
}
\title[Driven gradient exclusion processes] {Dynamical large
deviations for boundary driven gradient symmetric exclusion processes
in mild contact with reservoirs}
\author[A. Bouley]{Ang\`ele Bouley} 
\address{Ang\`ele Bouley
  \hfill\break\indent CNRS UMR 6085, Universit\'e de
  Rouen, \hfill\break\indent Avenue de l'Universit\'e, BP.12,
  Technop\^ole du Madril\-let, \hfill\break\indent
F76801 Saint-\'Etienne-du-Rouvray, France.} 
\email{angele.bouley@univ-rouen.fr}
\author[C. Landim]{Claudio Landim} 
\address{Claudio Landim
  \hfill\break\indent IMPA \hfill\break\indent Estrada Dona Castorina
  110, \hfill\break\indent
J. Botanico, 22460 Rio de Janeiro, Brazil\hfill\break\indent
  {\normalfont and} \hfill\break\indent CNRS UMR 6085, Universit\'e de
  Rouen, \hfill\break\indent Avenue de l'Universit\'e, BP.12,
  Technop\^ole du Madril\-let, \hfill\break\indent
F76801 Saint-\'Etienne-du-Rouvray, France.} 
\email{landim@impa.br}
\begin{document}

\begin{abstract}
We consider a one-dimensional gradient symmetric exclusion process in
mild contact with boundary reservoirs. The hydrodynamic limit of the
empirical measure is given by a non-linear second-order parabolic
equation with non-linear Robin boundary conditions.  We prove the
dynamical large deviations principle.
\end{abstract}

\maketitle

\section{Introduction}

A major challenge addressed in recent years consists in the physical
description of non-equilibrium states (cf. \cite{gm, mft} and references
therein).

The analysis of interacting particle systems stationary states, as the
number of degrees of freedom diverges, has proved to be an important
step in this program. The thermodynamical functional of the empirical
measure, the observable of interest in lattice gases, corresponds to
the large deviation rate functional for the empirical measure when the
particles are distributed according to the stationary state.

In equilibrium, when there is no current of particles, the invariant
measure is given by the Gibbs distribution specified by the
Hamiltonian, and the thermodynamical functional is called the free
energy.  In non-equilibrium, in the presence of currents, the
invariant measure is usually not known explicitly, and a basic problem
consists in characterizing the rate functional, known as the
non-equilibrium free energy.

In certain special cases \cite{dls, dhs}, combinatorial arguments
based on special feature of the exclusion processes provide a formula
for the non-equilibrium free energy. For general dynamics, a dynamical
approach has been proposed \cite{bdgjl2}.  One fist prove a dynamical
large deviations principle for the empirical measure in a fixed time
interval $[0,T]$. The non-equilibrium free energy of a density profile
$\gamma$ is given by the minimal cost among all trajectories which
start from the stationary density profile and reaches $\gamma$ in
finite time. This procedures provides a dynamical variational formula
for the non-equilibrium free energy.

In this article we continue the investigation started in \cite{bl,
fgln, bel} of the dynamical large deviations for interacting particle
systems whose hydrodynamic equation is described by a second-order
parabolic equation with Robin boundary conditions. We consider a
gradient exclusion process leading to a non-linear hydrodynamic
equation.  This non-linearity turns the large deviations rate
functional non-convex, a property much used in the proof of the large
deviations principle lower-bound in the linear case \cite{fgln}.

In possession of the dynamical large deviations, one can proceed as in
\cite{bg, f, bdgjl3, bel} to prove the large deviations for the
empirical measure under the invariant measure, deriving a dynamical
variational formula for the non-equilibrium free energy. This is left
for a future work. The next step, a challenging open problem, consists
in obtaining a simple formula for the non-equilibrium free energy,
derived in \cite{dls, bdgjl2} for symmetric exclusion processis in
strong contact with reservoirs and in \cite{dhs, bel} for these
dynamics in mild contact with reservoirs.

This article presents two novelties which can be useful in other
contexts, as for reaction-diffusion models without the concavity
assumption on the creation and destruction rates \cite{JLV93, BL12,
FLT19}.

Denote by $I_{[0,T]}(\cdot)$ the large deviations rate functional.
One of the main difficulties in the proof of the lower bound consists
in showing that any trajectory $\pi (t,dx) = u(t,x)\, dx$ with finite
rate function can be approximated by a sequence of smooth paths
$\pi_n (t,dx) = u_n(t,x)\, dx$ such that $\pi_n \to \pi$,
$I_{[0,T]}(\pi_n) \to I_{[0,T]}(\pi)$.

The proof of this density is divided in several steps. When the rate
functional is not convex, the standard argument to settle one of them
\cite{qrv, blm} relies on a Riesz representation lemma of linear
functionals in negative Sobolev spaces (cf. \cite[Lemma 4.8]{blm}). We
present here a much simpler and direct proof of this step.

The second technical novelty concerns the regularity of paths with
finite rate functionals.  Fix a path $\pi (t,dx) = u(t,x)\, dx$ such
that $I_{[0,T]}(\pi) < \infty$.  It is known, at least since
\cite{blm}, that such a path is weakly continuous in time. More
precisely, that for each space-dependent $C^1$ function $g$, the map
$t\mapsto \<\pi_t,g\>$ is uniformly continuous. Proposition \ref{th:
IPP} below asserts that this map is absolutely continuous and its
derivative belongs to $L^1$. This property permits to integrate by
parts $\int_0^T \< \partial_t H_t , u_t\>\, dt$, when
$H(t,x) = h(t)\, g(x)$ and plays a crucial role in the decomposition
of the rate functional presented in Section \ref{sec5} below.

\section{Notation and results}
\label{sec2}

\subsection*{The model}

We consider a one-dimensional boundary driven gradient symmetric
exclusion processes in weak contact with boundary reservoirs.  Fix
$N\ge 1$, and let $\color{blue} \Lambda_N=\{-N+1 ,\dots, N-1\}$.  The
state-space is represented by
$\color{blue} \mf S_N =\{0,1\}^{\Lambda_N}$ and the configurations by
the Greek letters $\eta$, $\xi$ so that $\color{blue} \eta (x)$,
$x\in \Lambda_N$, represents the number of particles at site $x$ for
the configuration $\eta$. Here and below all notation introduced in
the text and not in displayed equations is indicated in blue.

The theorems stated in this article apply to all gradient symmetric
exclusion dynamics in mild contact with reservoirs. To avoid heavy
notation, we consider a specific example.  Fix {\color{blue}
$a> -1/2$, $0<\mf a$, $\mf b<1$}. The variables $\mf a$, $\mf b$
represent the density of the left, right reservoirs,
respectively. Assume, without loss of generality, that
$\mf a\le \mf b$. Let $r_{x,x+1} \colon \mf S_N \to \bb R_+$,
$-N+2\le x\le N-3$, be the rates at which the variables $\eta(x)$,
$\eta(x+1)$ are exchanged, and given by
\begin{align*}
r_{x,x+1}(\eta)=1+a \,[\, \eta(x-1)+\eta(x+2) \,] \;.
\end{align*}
At the boundary, replace the occupation variables which do not belong
to $\Lambda_N$ by the density of the reservoirs:
\begin{gather*}
r_{-N+1,-N+2}(\eta)=1+a\, [\, \mf a + \eta(-N+3)\,] \;, \\
r_{N-2,N-1}(\eta)=1+a\, [\, \eta(N-3)+{\mf b}\,]\;.
\end{gather*}
The generator $L_N$ of the Markov process is given by
\begin{align*}
L_N=L_{N,0}+L_{N,b}
\end{align*}
where $L_{N,0}$, $L_{N,b}$ correspond to the bulk, boundary dynamics,
respectively. The action of the generator $L_{N,0}$ on functions
$f\colon \mf S_N \to \mathbb{R}$ is given by
\begin{align*}
(L_{N,0}f)(\eta)=N^2\sum_{x }r_{x,x+1}(\eta)
\, [\, f(\eta^{x,x+1})-f(\eta)\,] \;,
\end{align*}
where the sum is carried over all $x\in \bb Z$ such that
$\{x , x+1\} \subset \Lambda_N$. Moreover, $\eta^{x,x+1}$ stands for
the configuration obtained from $\eta$ by exchanging the occupation
variables $\eta(x)$ and $\eta(x+1)$:
\begin{align*}
\eta^{x,x+1}(z)=
\begin{cases}
\eta (z) &\mbox{ if } z \neq x,x+1\\
\eta (x+1)&\mbox{ if } z=x \\
\eta (x)  &\mbox{ if } z=x+1\;.
\end{cases}
\end{align*}

The generator of the boundary dynamics acts on functions
$f\colon \mf S_N \to \mathbb{R}$ as
\begin{align*}
(L_{N,b}f)(\eta)= N \,\Big\{ \,
r_L (\eta)\, [\, f(\eta^{-N+1})-f(\eta)\, ] \,+\,
r_R (\eta)\, [\, f(\eta^{N-1})-f(\eta)\, ] \, \Big\}\;.
\end{align*}
In this formula, $\eta^x$ represents the configuration obtained from
$\eta$ by flipping the occupation variable at site $x$:
\begin{align*}
\eta^x(z)=
\begin{cases}
\eta (z) &\mbox{ if } z \neq x\;,\\
1-\eta (x)&\mbox{ if } z=x \;,\\
\end{cases}
\end{align*}
and the rates $r_L$, $r_R$ are given by
\begin{gather*}
r_L(\eta)=\eta(-N+1)\, (\,1-{\mf a}\,) \,+\,
[\,1-\eta(-N+1)\, ]\, {\mf a}\;, \\
r_R(\eta)=\eta(N-1)\, (\,1-{\mf b}\,) \,+\,
[\,1-\eta(N-1)\,]\, {\mf b} \;.
\end{gather*}

Note that the bulk dynamics has been accelerated by $N^2$, while the
boundary one by $N$.  Denote by $(\eta^N_t)_{t \geq 0}$ the Markov
process associated to the generator $L_N$. For a smooth function
$\rho\colon \Omega \to ]0,1[$, let $\nu_{\rho(\cdot)}^N$ be the
Bernoulli product measure on $\mf S_N$ with marginals given by:
\begin{align*}
\nu_{\rho(\cdot )}^N(\eta(x)=1)=\rho ( x/N )   \;, \quad
x\in \Lambda_N\;.
\end{align*}
When the function $\rho$ is constant equal to $\mf c\in [0,1]$, we
denote $\nu_{\rho(\cdot )}^N$ by $\color{blue} \nu_{\mf c}^N$.  An
elementary computation shows that $\nu_{\mf a}^N$ is a reversible
stationary state for the Markov chain $\eta^N_t$ when $\mf a = \mf b$.

For a metric space $\mathcal{E}$ and $T>0$, denote by
$\color{blue} D([0,T],\mathcal{E})$, the space of right-continuous
functions with left limits $\mb x\colon [0,T] \to \mc E$ endowed with
the Skorohod topology and its Borel $\sigma$-algebra.  For a
probability measure $\mu$ on $\mf S_N$, let
$\color{blue} \mathbb{P}^N_{\mu}$ be the measure on
$D([0,T], \mf S_N)$ induced by the continuous-time Markov process
associated to the generator $L_N$ starting from $\mu$. Expectation
with respect to $\mathbb{P}^N_{\mu}$ is denoted by
$\color{blue} \mathbb{E}^N_{\mu}$.

\subsection*{Gradient condition}

For $x\in \bb Z$ such that $\{x,x+1\} \subset \Lambda_N$, denote by
$j_{x,x+1}$ the instantaneous current over the bond $(x,x+1)$. This is
the rate at which a particle jumps from $x$ to $x+1$ minus the rate at
which it jumps from $x+1$ to $x$:
\begin{equation*}
j_{x,x+1} (\eta) = r_{x,x+1} (\eta)\, [\, \eta(x) - \eta(x+1)\,] \;.
\end{equation*}
By definition of the rates $r_{x,x+1}$, the current is given by
\begin{equation}
\label{14}
j_{x,x+1} (\eta)  =  [\, \eta(x) - \eta(x+1)\,] 
\,+\, a\, (\, \tau_{x-1} f_1 - \tau_{x+1} f_1\,)
\,+\, a\, (\, \tau_{x} f_2 - \tau_{x-1} f_2\,) \;,
\end{equation}
where $\{\tau_x: x\in \bb Z\}$ stands for the group of translations,
$(\tau_x \eta)(y) = \eta(x+y)$, $y\in\bb Z$, and
$(\tau_x f)(\eta) = f(\tau_x \eta)$,
\begin{equation*}
f_1 (\eta) \,=\, \eta(0) \, \eta(1)  \;, \quad
f_2 (\eta) \,=\, \eta(0) \, \eta(2)  \;.
\end{equation*}
Note that the instantaneous current can be written as a sum of
functions minus their translations. This property is called the
gradient condition, cf. \cite[Chapter 7]{kl}. We carefully defined the
jump rates $r_{x,x+1}(\eta)$ in order to fullfil this condition.

Denote by $\color{blue} \Omega$ the open set $]-1,1[$.  Let
$\color{blue} \mathcal{M}$ be the set of non-negative measures on
$\Omega$ with total mass bounded by 2 endowed with the weak topology.
For a continuous function $F\colon [-1,1] \to \mathbb{R}$ and a
measure $\pi \in \mathcal{M}$, $\<\pi, F\>$ represents the integral of
$F$ with respect to $\pi$:
\begin{align*}
{\color{blue} \<\pi,F\>}
=\int_{\Omega} F(x)\, \pi(dx)\;.
\end{align*}

Denote by $\pi^N=\pi^N(\eta)$, $\eta\in \mf S_N$, the empirical
measure obtained by assigning mass $1/N$ to each particle of a
configuration $\eta$:
\begin{equation}
\label{13}
{\color{blue} {\pi^N}}
\,=\, \frac{1}{N}\sum_{x \in \Lambda_N} \eta(x) \,
\delta_{x/N} \;\; \text{and let}\;\;
{\color{blue} \pi^N_t } = \pi^N(\eta^N_t) 
\,:=\, \frac{1}{N}\sum_{x \in \Lambda_N} \eta^N_t(x) \,
\delta_{x/N} \;,
\end{equation}
where $\delta_y$ represents the Dirac mass at $y$.

For an open subset $\mc A$ of $\mathbb{R}$, $\color{blue} C^m(\mc A)$,
$1 \leq m \leq + \infty$, stands for the space of real functions
defined on $\mc A$ that are $m$ times continuously differentiable. Let
$\color{blue} C^m_0(\mc A)$, $\color{blue} C_K^m(\mc A)$,
$1 \leq m \leq + \infty$, be the subset of functions in $C^m(\mc A)$
which vanish at the boundary of $\mc A$, with compact support in
$\mc A$, respectively.

Fix a function $G$ in $C^3(\Omega)$, which can be extended
continuously, as well as its derivatives to $\overline{\Omega}$.  By
the gradient condition \eqref{14}, which permits two summation by
parts and, in consequence, a Taylor expansion of $G$ up to the second
order, providing in this way a factor $1/N^2$,
\begin{align*}
L_N \< \pi^N , G\> \;=\;
\frac{1}{N}\sum_{x=-N+2}^{N-2} (\Delta G)(x/N)\, \big\{\,
\eta(x) + 2\, a\, \tau_x f_1 \,-\, a\, \tau_{x} f_2\,\big\}
\;+\; B_N\; +\; R_N \;,
\end{align*}
where $B_N=B^L_N + B^R_N$ is the boundary term
\begin{align*}
B^L_N \; & =\; G(-1) \,[\,\mf a - \eta(-N+1) \,] \;+\;
(\nabla G)(-1) \, \eta(-N+2) \\
& +\, a\, (\nabla G)(-1) \, \big\{\, 
\tau_{-N+1} \, f_1 \,+\,  \tau_{-N+2}  \, f_1
\,-\,  \tau_{-N+1} \, f_2\,\big\} \\
& -\, (\nabla G)(-1) \, \big[\, \eta(-N+2) -\eta(-N+1)\, \big]\,
\big\{\, 1 + a\, \mf a +  a \eta(-N+3) \,\big\}\;,
\end{align*}
\begin{align*}
B^R_N \; & =\; G(1) \,[\,\mf b - \eta(N-1) \,] \;-\;
(\nabla G)(1) \, \eta(N-2) \\
& -\, a (\nabla G)(1) \, \big\{\, 
\tau_{N-2} \, f_1 \,+\,  \tau_{N-3}  \, f_1
\,-\,  \tau_{N-3} \, f_2\,\big\} \\
& -\, (\nabla G)(1) \, \big[\, \eta(N-1) -\eta(N-2)\, \big]\,
\big\{\, 1 + a\, \mf b + a \eta(N-3) \,\big\}\;,
\end{align*}
and $R_N$ is a remainder whose absolute value is bounded by $C_0(G)/N$
for some finite constant $C_0$ which depends on $G$ but not on
$N$. All constants below are allowed to depend on the parameters $a$,
$\mf a$, $\mf b$ with any mention to that in the notation.  In the
previous formula, $\color{blue} \Delta G$, $\color{blue} \nabla G$
stand for the Laplacian, gradient of $G$, respectively.

\subsection*{Heuristics}

Consider the martingale $M_N(t)$ defined by
\begin{align*}
M_N(t) \;=\; \<\pi^N_t, G\> \,-\, \<\pi^N_0, G\> \,-\,
\int_0^t L_N \<\pi^N_s, G\> \, ds\;.
\end{align*}
where $G\colon [-1,1]\to \bb R$ is a smooth function.  A computation
of its quadratic variation, similar to the one performed in
\cite[Chapter 4]{kl} yields that it vanishes in $L^2$ as $N\to\infty$.

For a function $f\colon\{0,1\}^{\bb Z} \to \bb R$ which depends only
on a finite number of coordinates, let $\widehat f\colon [0,1]\to \bb R$
be given by
\begin{align*}
\widehat f (\mf c) \,=\, E_{\nu^N_{\mf c}}[\,f\,]\;.
\end{align*}
In this formula, $N$ provided that $N$ has been chosen sufficiently
large to contain the support of $f$.

If one is allowed to replace $(\tau_x f) (\eta^N_s)$ by $\widehat
f (u(s, x/N))$ for some measurable function $u\colon [0,T]\times
[-1,1]\to [0,1]$, which is essentially the content of the so-called
two blocks estimate, it follows from the convergence of the martingale
$M_N(t)$ to $0$ and from the computation of the previous subsection that
\begin{align*}
& \int_{-1}^1 G(x)\, u(t,x)\, dx \;-\;
\int_{-1}^1 G(x)\, u(0,x)\, dx  \,-\,
\int_0^t \int_{-1}^1 (\Delta G) (x)\,
P_a (u(s,x)) \, dx \, ds \\
&\quad =\; \int_0^t \Big\{\, G(-1) \,  [\, \mf a - u(s,-1)\,] \,+\, 
(\nabla G)(-1) \, P_a(u (s,-1)) \,\Big\}\, ds \\
&\quad -\; \int_0^t \Big\{\, G(1) \,  [\, \mf b - u(s,1)\,] \,+\, 
(\nabla G)(1) \, P_a(u (s,1)) \,\Big\}\, ds \, + R_N\;, 
\end{align*}
where $P_a(z) = z + a^2 z$. The last term in $B^L_N$, $B^R_N$ does not
contribute because it has mean-zero with respect to all measures
$\nu^N_{\mf c}$.

Integrating twice by parts yields that $u(t,x)$ is the solution of the
partial differential equation
\begin{equation}
\label{10}
\left\{
\begin{aligned}
& \partial_t u \;=\; \nabla \big( D(u) \nabla u\,) \;, \quad
(t,x) \in (0,\infty) \times \Omega\;, \\
& D(u) \nabla u =  u - \mf a  \;, \quad 
(t,x) \in (0,\infty) \times \{-1\} \;, \\
&D(u) \nabla u =  \mf b - u  \;, \quad 
(t,x) \in (0,\infty) \times \{1\}\;,
\end{aligned}
\right.
\end{equation}
where
\begin{equation*}
{\color{blue} D(\rho) = P'_a(\rho) = 1 + 2a\rho}\;.
\end{equation*}

\subsection*{Hydrodynamic limit}

Let $\Pi_N \colon D([0,T], \mf S_N) \to D([0,T], \mathcal{M})$ be the
map which associates to a trajectory $(\eta^N_t : 0\le t\le T)$ the
empirical measure trajectory $(\pi^N_t : 0\le t\le T)$.  For a
probability measure $\mu$ in $\mf S_N$, $\mathbb{Q}_{\mu}^N$ stands
for the measure on $D([0,T], \mathcal{M})$ given by
$\color{blue} \mathbb{Q}_{\mu}^N=\mathbb{P}_{\mu}^N\, (\Pi^N)^{-1}$.

A sequence of measures $(\mu^N)_{N \in \mathbb{N}}$ on $\mf S_N$ is said
to be associated to a density profile $\rho_0\colon \Omega \to [0,1]$
if
\begin{equation}
\label{12}
\underset{N \to \infty}{ \lim} \mu_N \Big [ \,
\Big | \, \<\pi_N \,,\, G \> \,-\,
\int_{\Omega}G(x)\, \rho_0(x)\, dx \,\Big | >\delta \,\Big ] 
\,=\, 0 
\end{equation}
for all continuous functions $G\colon [-1, 1] \to \mathbb{R}$ and
$\delta > 0$.  The first main result of this article establishes the
hydrodynamic limit of the empirical measure.

Denote by $\color{blue} C^{m,n}([0,T]\times [-1,1])$,
$1 \leq n,m \leq + \infty$, the space of functions
$G\colon [0,T]\times [-1,1] \to \mathbb{R}$ with $m$ continuous
derivatives in time and $n$ continuous derivatives in space.

Let $\color{blue} \mc H^1 = \mc H^1(\Omega)$ be the Sobolev space of
measurable functions $G\colon \Omega \to \mathbb R$ with generalized
derivatives $\nabla G$ in $L^{2}(\Omega)$. The space $\mc H^1$ endowed
with the scalar product $\langle\cdot,\cdot\rangle_{1}$, defined by
\begin{equation}
\label{n08}
\langle G \,,\, H \rangle_{1}\;:=\;
\langle G \,,\, H\rangle \;+\;
\langle \nabla  G \,,\, \nabla  H\rangle\;,
\end{equation}
is a Hilbert space. The corresponding norm is denoted by
$\|\cdot\|_{\mc H^1}$:
\begin{equation*}
\|G\|_{\mc H^1}^{2} \;:=\;
\int_{-1}^1 |G(x)|^2 \;dx \;+\;
\int_{-1}^1 |\nabla  G(x)|^2 \;dx\;.
\end{equation*}
Recall from \cite{fgln} that any function $H$ in $\mc H^1$ has a
continuous version.

\begin{definition}
\label{d01}
Fix $T>0$ and a density profile $\rho_0\colon [-1,1] \to [0,1]$. A
mesurable function $u\colon [0,T] \times [-1,1] \to [0,1]$ is said to
be a weak solution of the differential equation \eqref{10} with
initial condition $u(0,\cdot) = \rho_0(\cdot)$ if
\begin{enumerate}
\item[(a)] $u$ belongs to $L^2([0,T],\mathcal{H}^1)$:
\begin{equation*}
\int_0^T \left ( \int_{-1}^1 | \nabla u(s,x) |^2dx\right )ds < \infty
\end{equation*}

\item[(b)] For every function $H$ in $C^{1,2}([0,T] \times [-1,1]$:
\begin{align*}
\<u_T,H_T\> \,-\, \<u_0,H_0\>
&=\int_0^T\<u_s, \partial_s H_s\> \, ds
\,-\, \int_0^T \<D(u_s) \, \nabla u_s, \nabla H_s\>\, ds \\
& + \int_0^T [\, {\mf a}-\rho_s(-1)\,]\, H_s(-1)\, ds 
+\int_0^T [\, {\mf b}-\rho_s(1)\,] \, H_s(1) \, ds \;.
\end{align*}
\end{enumerate}
\end{definition}

Corollary \ref{t03} asserts that there is at most one weak solution of
\eqref{10}. Existence for any initial condition $\rho_0\colon \Omega
\to [0,1]$ follows from the tightness part of the next theorem. 

\begin{thm}
\label{t01}
Fix $T>0$ and a profile $\rho_0\colon \Omega \to [0,1]$. Let
$(\mu^N)_{N \in \mathbb{N}}$ be a sequence of measure on $\mf S_N$
associated to $\rho_0$. Then, the sequence of probability measures
$(\mathbb{Q}_{\mu^N}^N)_{N \in \mathbb{N}}$ converges weakly to the
probability measure $\mathbb{Q}$ concentrated on the trajectory
$\pi(t,dx)=u(t,x)\, dx$, where $u$ is the unique weak solution of the
hydrodynamic equation \eqref{10} with initial condition
$\rho_0(\cdot)$.
\end{thm}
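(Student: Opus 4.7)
The plan is to follow the classical three-step strategy for hydrodynamic limits of gradient systems: (i) prove tightness of $(\mathbb{Q}_{\mu^N}^N)$ in $D([0,T],\mathcal{M})$, (ii) show that any subsequential limit point is concentrated on trajectories $\pi(t,dx)=u(t,x)\,dx$ whose density $u$ is a weak solution of \eqref{10} in the sense of Definition \ref{d01}, and (iii) invoke the uniqueness stated in Corollary \ref{t03} to conclude that the full sequence converges to the Dirac mass on this unique solution.

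For tightness, since $\mathcal{M}$ is compact in the weak topology, it is enough by Aldous--Rebolledo to control $t \mapsto \<\pi^N_t, G\>$ for smooth test functions $G$. I would use the Dynkin martingale
\begin{equation*}
M^G_N(t) \;=\; \<\pi^N_t, G\> \,-\, \<\pi^N_0, G\> \,-\, \int_0^t L_N \<\pi^N_s, G\>\, ds,
\end{equation*}
whose quadratic variation is of order $1/N$ by the standard computation (the bulk $N^2$-acceleration compensates the two summations by parts from the gradient condition, and the boundary $N$-acceleration produces an $O(1/N)$ contribution). The explicit expression for $L_N\<\pi^N, G\>$ derived in the excerpt is uniformly bounded in $N$ and $\eta$, which supplies Aldous' condition for the drift term.

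To identify limit points, I would fix $H \in C^{1,2}([0,T]\times[-1,1])$, write the Dynkin martingale for the time-dependent test function $H_t$, and pass to the limit $N \to \infty$. The bulk integrand reduces, after one summation by parts, to averages of the cylinder functions $\tau_x f_1$ and $\tau_x f_2$ against $(\Delta H_s)(x/N)$; these are replaced by $\widehat{f_i}(u(s,x/N)) = u(s,x/N)^2$ via the standard one-block and two-block estimates of \cite[Chapter 5]{kl}, controlled by the entropy bound $H(\mu^N \mid \nu^N_{\mf a}) = O(N)$ which holds because $\mf a \in (0,1)$ and the state space is bounded. Collecting terms and identifying $1 + 2au = D(u)$ produces the bulk term $-\<D(u_s)\nabla u_s, \nabla H_s\>$ of Definition \ref{d01} after one further integration by parts. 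The boundary decomposition $B_N = B_N^L + B_N^R$ in the excerpt yields in the limit exactly the Robin flux $H_s(-1)[\mf a - u(s,-1)]$ and its right counterpart at $\mf b$, together with boundary contributions $(\nabla H)(\pm 1)\,P_a(u(s,\pm 1))$ that are absorbed in the double-integration-by-parts identification of $\<D(u_s)\nabla u_s, \nabla H_s\>$.

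The main technical obstacle I foresee is twofold. First, one needs an energy estimate guaranteeing $u \in L^2([0,T], \mc H^1)$ so that the boundary traces of $u(s,\cdot)$ at $x=\pm 1$ are well defined and so that Definition \ref{d01}(a) holds; I would derive it by bounding the Dirichlet form of $\sqrt{d\mu^N_t/d\nu^N_{\mf c}}$ against a reference product measure $\nu^N_{\mf c}$ with $\mf c \in (0,1)$, which provides the usual $L^2$ control on the discrete gradients $\eta^N_s(x+1)-\eta^N_s(x)$. Second, the replacement lemma must be adapted near the boundary, where translations $\tau_x$ are not available for sites close to $\pm(N-1)$; the mild-contact acceleration by $N$ at the boundary (as opposed to $N^2$ in the bulk) is what makes the adjacent-to-boundary occupation variables hydrodynamically frozen at the bulk trace $u(s,\pm 1)$ rather than at the reservoir densities $\mf a, \mf b$, and this has to be quantified through an entropy-production bound on a block of size $\epsilon N$ attached to each endpoint. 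Once tightness, the energy estimate, and the bulk and boundary replacements are established, every limit point is supported on weak solutions of \eqref{10}, and Corollary \ref{t03} upgrades the subsequential weak convergence to the full convergence claimed in the theorem.
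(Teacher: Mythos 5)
Your proposal follows essentially the same route the paper takes: the paper simply declares the proof "similar to the classical one presented in Section 5 of \cite{kl}" (tightness via Aldous and the Dynkin martingale, one- and two-block replacement, then identification of limit points), points to the uniqueness result in Corollary \ref{t03}, and isolates the energy estimate (Lemma \ref{l03}) as the only step requiring real adaptation, which you also flag. The extra details you supply (the computation $\widehat{f_1} = \widehat{f_2} = u^2$ yielding $P_a(u) = u + a u^2$ hence $D(u) = 1+2au$, the observation that mild contact leaves the boundary trace free rather than pinned at $\mf a, \mf b$, and the need to verify the boundary replacement on blocks of size $\epsilon N$) are all consistent with, and correctly fill in, what the paper leaves implicit.
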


The proof of this theorem is given in Section \ref{sec3}.

\subsection*{The rate functional}

Fix $T>0$. Let $\mathcal{M}^0$ be the subset of $\mathcal{M}$ of all
absolutely continuous measures with respect to the Lebesgue measure
with positive density bounded by 1:
\begin{align*}
{\color{blue} \mathcal{M}^0} \,:=\,  
\{ \pi \in \mathcal{M} 
\mbox{ : } \pi(dx)= \gamma (x)dx \mbox{ and } 
0\leq \gamma (x) \leq 1 \mbox{ a.e. }\}  \;.  
\end{align*}

\begin{definition}
Denote by $\mathcal{Q}\colon D([0,T],\mathcal{M}^0) \to \bb R_+$ the
energy given by
\begin{align*}
\mathcal{Q}(\pi) \,=\, \underset{G}{\sup}\, 
\Big \{ \, 2\int_0^T \<u_t,\nabla G_t\> \, dt 
- \int_0^T \int_{-1}^1 \chi (u(t,x))\, G(t,x)^2\, dx\, dt \, \Big \}\;, 
\end{align*}
where $\pi(t,dx) = u(t,x)\, dx$, the supremum is carried over all
functions $G \in C^{1,2}([0,T] \times [-1,1])$ and
$\color{blue} \chi(a) = a(1-a)$ is the static compressibility of the
exclusion process.
\end{definition}

Let $\color{blue} D_{\mc E} ([0,T], \mc M^{0})$ be the trajectories in
$D ([0,T], \mc M^{0})$ with finite energy.  Fix $0\le r< s \le T$, and
a trajectory $\pi \in D_{\mc E}([0,T],\mathcal{M}^0)$,
$\pi(t,du)=u(t,x)dx$.  Denote by
$L_{[r,s]} \colon C^{1,1}([0,T]\times [-1,1]) \to \mathbb{R}$ the
linear functional given by
\begin{equation}
\label{07}
{\color{blue} L_{[r,s]}(H)} \,=\, \<\pi_s, H_s\> \,-\, \<\pi_r, H_r\>
\,-\, \int_r^s\<\pi_t, \partial_t H_t\> \, dt \;.
\end{equation}
When $r=0$, $s=T$, we represent $ L_{[s,t]}$ by $L_0$:
$\color{blue} L_{0}(H) = L_{[0,T]}(H)$.

For each $H \in C^{1,1}([r,s] \times [-1,1])$, let
$\hat{J}_H=\hat{J}_{H,[r,s]} \colon D_{\mc E}([0,T],\mathcal{M}^0) \to
\bb R$ be the functional given by:
\begin{align*}
\hat{J}_H(\pi) \, & =\, L_{[r,s]}(H)
\,+\, \int_r^s \int_{-1}^1 D(u(t,x))\, \nabla u(t,x)
\, \nabla H(t,x) \,dx\, dt \\
& - \, \int_r^s \int_{-1}^1 \sigma(u(t,x))\,
(\nabla H(t,x))^2 \,dx \, dt \,-\
\int_r^s B(u_t,H_t)\, dt \;, 
\end{align*}
where $\color{blue} \sigma (a)= \chi(a)\, D(a)$ is the mobility, and 
\begin{equation}
\label{11}
\begin{aligned}
& {\color{blue} B(u_t,H_t)}
\, :=\, u_t(1)\, (1-{\mf b})\,
\left ( e^{-H(t,1)}-1\right )\, +\,
{\mf b}\, [1-u_t(1)]\,  \left ( e^{H(t,1)}-1\right )\\
& \;\; +u_t(-1)\,  (1-{\mf a})\, \left ( e^{-H(t,-1)}-1\right )
\, +\, {\mf a}\, [1-u_t(-1)]\,  \left ( e^{H(t,-1)}-1\right ) \;.
\end{aligned}
\end{equation}
We extend the definition of the functional $\hat J_H$ to the space
$D([0,T],\mathcal{M})$ by setting
\begin{equation*}
{\color{blue} J_H(\pi) } \;=\; 
\left \{ 
\begin{array}{ll}
\hat{J}_H(\pi) & \mbox{if } \pi 
\in D_{\mc E} ([0,T],\mathcal{M}^0) \;, \\
+ \infty & \mbox{otherwise }\;.  
\end{array}
\right .
\end{equation*}

Fix a profile $\gamma\colon \Omega \to [0,1]$ and $0\le r<s\le
T$. Denote by $I_{[r,s]} \colon  D([0,T],\mathcal{M}) \to [0,+\infty]$
the functional given by
\begin{align*}
{\color{blue} I_{[r,s]}(\pi)} \,:=\, 
\underset{H \in C^{1,1}([r,s] \times [-1,1])}{ \sup }  \,
J_H(\pi) \;,
\end{align*}
and let  
\begin{equation*}
{\color{blue} I_{[r,s]}(\pi|\gamma)} \;=\; 
\left \{ 
\begin{array}{ll}
I_{[r,s]}(\pi)  & \mbox{if } \pi_0 (dx)  \,=\, \gamma(x)\, dx \;, \\
+ \infty & \mbox{otherwise }\;.  
\end{array}
\right .
\end{equation*}

\begin{theorem}
\label{mt2}
Fix $T>0$ and a measurable function $\gamma \colon [-1,1] \to [0,1]$.
The functional
$I_{[0,T]}(\cdot|\gamma) \colon D([0,T],\mathcal{M})\to[0,\infty]$ is
lower semicontinuous and has compact level sets.
\end{theorem}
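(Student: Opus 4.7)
The plan is to deduce both claims from two a priori estimates extracted from $I_{[0,T]}$: an energy bound ensuring $I_{[0,T]}(\pi)\le C\Rightarrow \mc Q(\pi)\le\Phi(C)$, and a H\"older-$\tfrac12$ modulus of continuity for $t\mapsto\<\pi_t,g\>$. Since Skorohod convergence in $D([0,T],\mc M)$ forces convergence at the endpoints $t=0,T$, the constraint $\{\pi:\pi_0(dx)=\gamma(x)\,dx\}$ is closed and it suffices to prove the two properties for $I_{[0,T]}$ alone.

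\textbf{Lower semicontinuity.} The energy bound is obtained by testing $I_{[0,T]}\ge J_H$ against $H(t,x)=\lambda G(x)$, integrating by parts via $\int D(u)\nabla u\,\nabla G=-\int P_a(u)\Delta G+[P_a(u)\nabla G]_{-1}^{1}$ and completing the square in $\lambda$; since $D\ge D_{\min}>0$ and $\sigma=\chi D$, the resulting supremum over $G$ controls $\mc Q$. Given now $\pi^n\to\pi$ in $D([0,T],\mc M)$ with $\liminf_n I_{[0,T]}(\pi^n)<\infty$, this yields a uniform $L^2([0,T],\mc H^1)$ bound on the densities $u^n$ along a subsequence, and testing $I_{[0,T]}\ge J_H$ against time-localized $H$ provides time regularity of $u^n$ in a negative Sobolev norm. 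Aubin--Lions then gives strong $L^2([0,T]\times\Omega)$ convergence of $u^n$ and strong $L^2([0,T])$ convergence of the boundary traces $u^n(\cdot,\pm 1)$ along a sub-subsequence. Using the representation
\begin{align*}
\hat J_H(\pi)\,=\,L_{[0,T]}(H)\,&-\,\int_0^T\<P_a(u_t),\Delta H_t\>\,dt\,+\,\int_0^T\bigl[P_a(u_t)\nabla H_t\bigr]_{-1}^{1}\, dt \\
&-\,\int_0^T\<\sigma(u_t),(\nabla H_t)^2\>\,dt\,-\,\int_0^T B(u_t,H_t)\,dt\;,
\end{align*}
obtained by the same integration by parts, together with the Lipschitz continuity of $P_a$, $\sigma$ and the exponentials in $B$ on $[0,1]$, each term of $\hat J_H(\pi^n)$ converges to the corresponding term of $\hat J_H(\pi)$. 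Hence $J_H(\pi)\le\liminf_n J_H(\pi^n)\le\liminf_n I_{[0,T]}(\pi^n)$, and taking the supremum over $H$ delivers lower semicontinuity of $I_{[0,T]}$.

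\textbf{Compactness of level sets.} The set $K_C:=\{I_{[0,T]}(\cdot|\gamma)\le C\}$ is closed by lower semicontinuity. Since $\mc M$ is compact, Arzel\`a--Ascoli reduces relative compactness in $D([0,T],\mc M)$ to a uniform time-modulus of continuity for $t\mapsto\<\pi_t,g\>$ over $K_C$, for $g$ in a countable dense subset of $C^\infty([-1,1])$. Plugging $H(t,x)=\lambda\varphi_{s,\delta}(t)g(x)$ with $\varphi_{s,\delta}$ a smooth approximation of $\mathbf 1_{[s,s+\delta]}$ into $C\ge J_H(\pi)$, controlling $\int_s^{s+\delta}\<D(u)\nabla u,\nabla g\>\,dt\le C_0\sqrt\delta$ by Cauchy--Schwarz and the energy bound, and estimating the remaining terms by constants times $\lambda\delta$ and $\lambda^2\delta$, one arrives at
\begin{equation*}
\lambda\,\bigl|\<\pi_{s+\delta},g\>\,-\,\<\pi_s,g\>\bigr|\;\le\; C\,+\,C_1\,\lambda\sqrt\delta\,+\,C_2\,\lambda^2\delta\;.
\end{equation*}
Optimizing in $\lambda\sim\delta^{-1/2}$ yields $|\<\pi_{s+\delta},g\>-\<\pi_s,g\>|\le C(g,C)\sqrt\delta$, hence the relative compactness of $K_C$.

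\textbf{Main difficulty.} The delicate point is passing to the limit in the nonlinear ingredients $P_a(u)$, $\sigma(u)$ and the exponential expressions in $B(u,H)$: Skorohod convergence provides only weak-$*$ convergence of the densities $u^n$, so strong $L^2$ convergence of both $u^n$ in the bulk and the boundary traces $u^n(\cdot,\pm 1)$ must be extracted by a careful Aubin--Lions argument combining the energy bound with the time regularity coming from the variational formula. This is also where the loss of convexity in the rate functional, due to the non-linearity of the hydrodynamic equation, makes the analysis more subtle than the linear case treated in \cite{fgln}.
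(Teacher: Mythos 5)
Your overall strategy coincides with the paper's: the energy bound is Lemma \ref{l05}, the uniform weak time-equicontinuity over level sets is Lemma \ref{l04}, compactness then follows from Ascoli exactly as in Theorem \ref{thm: semi cont de I}, and the lower semicontinuity is obtained, as in Theorem 2.4 of \cite{fgln}, by upgrading the Skorohod convergence to strong $L^2$ convergence of the densities (and of their traces) and passing to the limit term by term in $\hat J_H$; the reduction of $I_{[0,T]}(\cdot|\gamma)$ to $I_{[0,T]}$ via closedness of $\{\pi:\pi_0(dx)=\gamma(x)dx\}$ is also how the paper's remark handles the conditioned functional. Two small points in the lsc step deserve a sentence each: you must record that the limit path lies in $D_{\mc E}([0,T],\mc M^0)$ (so that $J_H(\pi)=\hat J_H(\pi)$), which follows from $0\le u^n\le 1$ and the lower semicontinuity of $\mc Q$; and the strong $L^2([0,T])$ convergence of the traces is not a direct output of Aubin--Lions, it requires the one-dimensional interpolation bound $|v(\pm1)|^2\le C\,(\|v\|_{L^2}^2+\|v\|_{L^2}\|\nabla v\|_{L^2})$ combined with the uniform energy bound. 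Both are standard repairs.

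The step that fails as written is the modulus-of-continuity estimate. With $H(t,x)=\lambda\varphi_{s,\delta}(t)g(x)$, the boundary contribution $\int_s^{s+\delta}B(u_t,H_t)\,dt$ is \emph{not} of order $\lambda\delta+\lambda^2\delta$: by \eqref{11} it contains $e^{\pm\lambda\varphi(t)g(\pm1)}$, so for $g(\pm1)\neq 0$ it is only bounded by $C\,\delta\,e^{\lambda\|g\|_\infty}$. Hence the optimization $\lambda\sim\delta^{-1/2}$ is not admissible, and the claimed H\"older-$\tfrac12$ modulus is in fact false in general for this boundary-driven functional: a bounded boundary cost on a window of length $\delta$ permits a mass exchange through the reservoirs of order $1/\log(1/\delta)$, which is much larger than $\sqrt\delta$. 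The conclusion you actually need (a uniform, not necessarily H\"older, modulus over $\{I_{[0,T]}\le C\}$) does survive, but by the argument of Lemma \ref{l04} (as in Lemma 4.1 of \cite{flm} and Lemma 3.1 of \cite{fgln}): divide the inequality by $\lambda$, first fix $\lambda$ large enough that $C/\lambda\le \epsilon/2$, and only then choose $\delta$ small enough that $C_1\sqrt\delta+C_2\lambda\delta+C_3\lambda^{-1}\delta e^{c\lambda}\le \epsilon/2$. Your Cauchy--Schwarz bound $C_1\lambda\sqrt\delta$ on the current term, resting on the energy estimate, is correct and is exactly what makes this scheme work; only the treatment of the exponential boundary term and the ensuing optimization must be corrected.
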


This result is proved in Section \ref{sec4}, where we present some
properties of the rate functional $I_{[0,T]}(\cdot)$.

\subsection*{Dynamical large deviations}

For $0<\beta<1$, denote by $\color{blue} C^{n+\beta}([-1,1])$, $n$ a
non-negative integer, the functions in $C^{n}([-1,1])$ whose $n$-th
derivative is H\"older continuous with parameter $\beta$.  The main
theorem of this article reads as follows.

\begin{thm}
\label{t02}
Fix $T>0$, a density profile $\gamma \in C^{2+\beta}([-1,1])$ for some
$0<\beta<1$ satisfying the boundary conditions
\begin{equation}
\label{fv-01}
D(\gamma (-1))  \, \nabla \gamma  (-1) \,=\, \gamma(-1) - \mf a\;, \quad
D(\gamma(1)) \, \nabla \gamma (1) \,=\, \mf b - \gamma(1) \;,
\end{equation}
and a sequence of configurations $\{\eta^N\}_{N\in \bb N}$.  Assume
that $\delta_{\eta^N}$ is associated to $\gamma$ in the sense of
\eqref{12}.  Then, the sequence of probability measures
$\{\bb Q_{\eta^N}\}_{N\geq 1}$ satisfies a large deviation principle
with speed $N$ and good rate function $I_{[0,T]}(\cdot|\gamma)$.  Namely,
for each closed set $C \subset D([0,T],\mathcal{M})$,
\begin{align*}
\limsup_{N \to \infty}
\frac{1}{N}\log \mathbb{Q}_{\eta^N}(C) \leq
-\inf_{\pi \in C}  I_{[0,T]}(\pi |\gamma)
\end{align*}
and for each open set $O \subset D([0,T],\mathcal{M})$,
\begin{align*}
\liminf_{N\to\infty}  
\frac{1}{N} \log \mathbb{Q}_{\eta^N}(O) 
\geq - \, \inf_{\pi \in O}  I_{[0,T]}(\pi |\gamma)\;.
\end{align*}
\end{thm}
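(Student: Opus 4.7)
The plan is to follow the Donsker--Varadhan scheme for dynamical large deviations of interacting particle systems, adapted to boundary-driven gradient models in the spirit of \cite{bdgjl2, fgln, bel}. The proof splits into three parts: exponential tightness together with the upper bound, a lower bound along a class $\mathcal{T}$ of smooth nondegenerate trajectories, and a density/recovery argument showing that $\mathcal{T}$ is $I$-dense in $\{I_{[0,T]}(\cdot|\gamma)<\infty\}$.

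\textbf{Upper bound.} For every $H\in C^{1,2}([0,T]\times[-1,1])$, the exponential of $N\langle\pi^N_t,H_t\rangle$ minus its Dynkin compensator is a mean-one positive martingale $M^N_H(t)$. The computation of the Heuristics subsection, combined with superexponential versions of the two blocks and Replacement Lemmas (proved in standard fashion through Dirichlet form bounds against the reference measure $\nu^N_{\mf a}$, with the Poissonian boundary flips controlled by Aldous-type arguments), shows that $(1/N)\log M^N_H(T) = J_H(\pi^N) + o(1)$ with superexponential probability. Chebyshev then yields, for every closed $C\subset D([0,T],\mathcal{M})$ and every such $H$,
\[
\limsup_N \tfrac{1}{N}\log\mathbb{Q}_{\eta^N}(C)\le -\inf_{\pi\in C}J_H(\pi).
\]
Exponential tightness of $\{\mathbb{Q}_{\eta^N}\}$ reduces the problem to compact sets, and a minimax argument (using the joint continuity of $(\pi,H)\mapsto J_H(\pi)$ together with the good-rate property in Theorem \ref{mt2}) exchanges the supremum over $H$ with the infimum over $\pi$ to deliver $-\inf_{\pi\in C}I_{[0,T]}(\pi|\gamma)$.

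\textbf{Lower bound on a smooth class.} Let $\mathcal{T}$ consist of trajectories $\pi(t,dx)=u(t,x)\,dx$ with $u\in C^{1,2}$, $u(0,\cdot)=\gamma$, $\varepsilon\le u\le 1-\varepsilon$ for some $\varepsilon>0$, and such that $u$ solves the tilted equation $\partial_t u = \nabla(D(u)\nabla u) - 2\nabla(\sigma(u)\nabla H)$ in the bulk with the matching tilted Robin conditions at $\pm 1$ coming from $B(u,H)$, for some $H\in C^{1,2}$. I perform a Girsanov change of measure by $M^N_H(T)$; under the new law $\mathbb{P}^{N,H}_{\eta^N}$ the particle system remains Markov, with perturbed bulk rates $r_{x,x+1}(\eta)\exp\{(\nabla H)(x/N)/N + O(N^{-2})\}$ and analogously perturbed boundary rates. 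A hydrodynamic argument for this perturbed chain, parallel to the proof of Theorem \ref{t01}, yields $\pi^N\to\pi$ in probability under $\mathbb{P}^{N,H}_{\eta^N}$; since the Radon--Nikodym derivative equals $\exp\{-N J_H(\pi^N)+o(N)\}$, the classical entropy lower bound produces $\liminf_N (1/N)\log\mathbb{Q}_{\eta^N}(O)\ge -J_H(\pi)=-I_{[0,T]}(\pi|\gamma)$ for every neighborhood $O$ of $\pi$.

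\textbf{Density, and the main obstacle.} The crux is to prove that $\mathcal{T}$ is $I$-dense: for every $\pi$ with $I_{[0,T]}(\pi|\gamma)<\infty$ there must exist $\pi_n\in\mathcal{T}$ with $\pi_n\to\pi$ in $D([0,T],\mathcal{M})$ and $I_{[0,T]}(\pi_n|\gamma)\to I_{[0,T]}(\pi|\gamma)$. Because $D(u)=1+2au$ is nonlinear the rate functional is not convex, and one cannot simply mollify $\pi$ and invoke Jensen as in the linear case treated in \cite{fgln}. I would chain four approximations, each preserving $\pi_0=\gamma$ and the upper semicontinuity of $I$ along the limit: first interpolate with a fixed smooth reference profile to push $u$ into $[\varepsilon,1-\varepsilon]$ and bound $\sigma(u)$ away from $0$; then mollify in time by convolution with a heat kernel, whose stability under $J_H$ relies on the absolute continuity of $t\mapsto\langle\pi_t,g\rangle$ and the resulting integration by parts of $\int_0^T\langle\partial_t H_t,u_t\rangle\,dt$ supplied by Proposition \ref{th: IPP}; then mollify in space while respecting the boundary data; finally recover $H$ by solving the inverse, elliptic-in-space problem $\nabla(\sigma(u)\nabla H) = \tfrac{1}{2}[\nabla(D(u)\nabla u) - \partial_t u]$ with the nonlinear Robin data dictated by $B(u,H)$. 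This last step is the principal difficulty, since it is where the non-convexity of $I$ and the nonlinear boundary term interact most delicately; it is also where the direct argument announced in the introduction replaces the Riesz representation in negative Sobolev spaces used in \cite[Lemma 4.8]{blm}. Once $I$-density is established, the lower bound extends from $\mathcal{T}$ to the entire level set $\{I_{[0,T]}(\cdot|\gamma)<\infty\}$, and combining with the upper bound finishes the proof of Theorem \ref{t02}.
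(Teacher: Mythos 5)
Your overall architecture coincides with the paper's: upper bound by exponential martingales, superexponential replacement estimates and a minimax argument (the paper defers this to \cite{fgln, flm}); lower bound by tilting the dynamics, proving the hydrodynamic limit of the weakly asymmetric chain (Proposition \ref{pf01}), computing the relative entropy (Lemma \ref{lf01}) and invoking the argument of Lemma 10.5.4 in \cite{kl}; and reduction to the $I_{[0,T]}(\cdot|\gamma)$-density of a class of smooth nondegenerate trajectories. The gap is in the density step, which is where essentially all of the paper's work lies. First, your class $\mathcal T$ requires $u(0,\cdot)=\gamma$ together with $\varepsilon\le u\le 1-\varepsilon$ on the whole of $[0,T]$. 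Under the hypotheses of the theorem $\gamma$ may touch $0$ or $1$, so $\mathcal T$ may be empty, and in any case a path with finite rate function cannot in general be pushed away from the degenerate values near $t=0$ while keeping $\pi_0(dx)=\gamma(x)dx$ fixed. The paper's class $\Pi_\gamma$ (Definition \ref{d04}) imposes instead that the path follow the hydrodynamic equation on an initial interval $[0,\mathfrak t]$ and is nondegenerate only afterwards; correspondingly, the tilt $H$ must vanish on an initial interval in Proposition \ref{pf01} and in \eqref{c03}. Second, the interpolant cannot be ``a fixed smooth reference profile'': to preserve the initial condition, to add no cost, and to become nondegenerate instantaneously, one must interpolate with the solution $\rho$ of the hydrodynamic equation started from $\gamma$, whose strict bounds $\epsilon\le\rho_t\le 1-\epsilon$ for $t\ge\delta$ are exactly Proposition \ref{p07} --- the only place where the hypotheses $\gamma\in C^{2+\beta}$ and \eqref{fv-01} enter. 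Your argument never uses these hypotheses, which signals the missing ingredient.

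Moreover, even with the correct interpolant $u^\epsilon=(1-\epsilon)u+\epsilon\rho$, proving $I_{[0,T]}(\pi^\epsilon|\gamma)\to I_{[0,T]}(\pi|\gamma)$ is precisely where the non-convexity bites, and your proposal gives no argument for it: one must bound $\hat J_H(\pi^\epsilon)$ uniformly over $H$, using that $\rho$ is a weak solution (so the $\epsilon$-linear-in-$H$ contributions are absorbed by the negative boundary term $B_0$ and the quadratic term, up to errors controlled by the finite energies of $u$ and $\rho$); this is the content of Lemma \ref{lc02} and the ``simpler direct proof'' announced in the introduction. You instead place the principal difficulty in recovering $H$ from a smooth nondegenerate trajectory, but for paths in $\Pi_\gamma$ that step is elementary (Lemma \ref{lem: regularite H}, Proposition \ref{l09b}), since the elliptic problem \eqref{5-01b} is uniformly nondegenerate; and the replacement of the Riesz-representation argument of \cite[Lemma 4.8]{blm} occurs in the interpolation step, not there. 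Finally, the time mollification requires first flattening the path on a small interval after the hydrodynamic stretch (the paper's $\mathcal F_2$ step) so the convolution does not spoil the matching at $t=\delta$, and its justification rests on the decomposition of the rate functional (Propositions \ref{p02} and \ref{p03}, via Proposition \ref{th: IPP}), none of which appears in your outline. As written, the density part of your proof therefore does not go through.
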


We prove this theorem in Section \ref{sec7}.

\begin{remark}
\label{rm3}
The hypotheses that $\gamma$ belongs to $C^{2+\beta}([-1,1])$ for some
$0<\beta<1$ and satisfies the boundary conditions \eqref{fv-01} are
only used in the proof of Proposition \ref{p07} where we derive a
maximum principle for the solutions of the hydrodynamic equation
\eqref{10}.

In other words, let $u$ be the solution of the hydrodynamic equation
\eqref{10} with initial condition $\gamma$.  If one is able to prove,
under weaker assumptions on $\gamma$, that for each $\delta >0$, there
exists $\epsilon >0$ such that $\epsilon \le u (t,x) \le 1-\epsilon$
for every $(t,x) \in [\delta,T] \times [-1,1]$, then Theorem \ref{t02}
holds under these weaker hypotheses.
\end{remark}

\subsection*{$I_{[0,T]}$-density}

The main step in the proof of the large deviations lower bound consists
in showing that any trajectory $\pi$ in $D([0,T],\mc {M})$ with finite
rate function can be approximated by a sequence of regular
trajectories $\pi^n$ in such a way that $I_{[0,T]}( \pi^{n}|\gamma)$
converges to $I_{[0,T]}( \pi|\gamma)$. The precise statement requires
some notation.

\begin{definition}
\label{d05}
A subset $A $ of $ D([0,T], \mathcal{M})$ is said
$I_{[0,T]}(.|\gamma)$-dense if for any $\pi \in D([0,T],\mathcal{M})$
such that $I_{[0,T]}(\pi|\gamma)< \infty$, there exists a sequence
$(\pi^n)_{n \in \mathbb{N}}$ in $A$ such that $\pi^n \to \pi$ in
$D([0,T],\mathcal{M})$, and
$I_{[0,T]}(\pi^n|\gamma) \to I_{[0,T]}(\pi|\gamma)$.
\end{definition}

\begin{definition}
\label{d04}
Given $\gamma\colon [-1,1]\to [0,1]$, let $\Pi_\gamma$ be the
collection of all paths $\pi(t,dx) = u(t,x) dx$ in
$D([0,T], \mc M^0)$ such that
\begin{itemize}
\item[(a)] There exists $\mf t >0$, such that $u$ follows the
hydrodynamic equation \eqref{10} with initial condition
$\gamma(\cdot)$ in the time interval $[0, \mf t]$. In particular,
$u(0,\cdot) = \gamma (\cdot)$.

\item[(b)] There exists $\epsilon>0$ such that
$\epsilon \le u(t,x) \le 1-\epsilon$ for all $(t,x)$ in
$[\mf t , T]\times[-1,1]$, and $u$ is smooth on
$(\mf t,T]\times [-1,1]$.
\end{itemize}
\end{definition}

Mind that in this definition we do not assume that $\gamma$ fulfils
the conditions of Theorem \ref{t02}.  Proposition \ref{p05} states
that for density profiles $\gamma: [-1,1] \to [0,1]$ satisfying the
hypotheses of Theorem \ref{t02}, the set $\Pi_\gamma$ is
$I_{[0,T]}(\cdot|\gamma)$-dense. Moreover, under the hypotheses of
Theorem \ref{t02}, the first requirement in condition (b) can be
extended to the set $(0,T]\times [-1,1]$. Indeed, by Proposition
\ref{p07}, for every $0<\delta \le T$, there exists $\epsilon>0$ such
that $\epsilon \le u(t,x) \le 1-\epsilon$ for all $(t,x)$ in
$[\delta , T]\times[0,1]$.

At the end of Section \ref{sec6}, we provide an explicit formula for
the rate function of trajectories in $\Pi_\gamma$. This result does
not require the density profile $\gamma$ to satisfy the hypotheses of
Theorem \ref{t02}. For $0\le \varrho \le 1$, let $\mf p_{\varrho}$,
$\mf c_{\varrho} \colon [0,1] \times T \to \bb R$ be given by
\begin{gather}
\label{5-02}
\mf p_{\varrho} (a,M) \;=\; [1-a]\, \varrho \,
e^M \;-\; a\, [1-\varrho] \, e^{-M}    \;,
\\
\mf c_{\varrho} (a,M) \;=\;  [1-a]\, \varrho \,
[1 - e^M + M e^M ] \;+\; a\, [1-\varrho] \,
[\, 1 - e^{-M}  - M  e^{-M}]  \;. \nonumber
\end{gather}

\begin{proposition}
\label{l09b}
Fix a density profile $\gamma: [0,1]\to [-1,1]$ and a trajectory $\pi$
in $\Pi_\gamma$. Then, for each $t>0$, the elliptic equation (for $H$)
\begin{equation}
\label{5-01b}
\left\{
\begin{aligned}
& \partial_t u \;=\; \nabla (D(u) \nabla u)  \,-\,
2\, \nabla  \{ \sigma(u) \ \nabla  H \}\; , \\
& [D(u) \nabla  u]  (t,1) \,-\, 2\, \sigma(u(t,1)) \, \nabla  H(t,1) \,=\,
\mf p_{\mf b} \big(\, u(t,1)\,,\, H(t,1)\, \big) \;, \\
& [D(u) \nabla  u] (t,-1) \,-\, 2\, \sigma(u(t,-1)) \, \nabla  H(t,-1) \,=\,
-\, \mf p_{\mf a} \big(\, u(t,-1)\,,\, H(t,-1)\, \big) \;,
\end{aligned}
\right.
\end{equation}
has a unique solution, denoted by $H_t$. The function $H$ belongs to
$C^{1,1}(]\mf t,T]\times [-1,1])$, and the rate functional $I_{[0,T]} (u)$
takes the form
\begin{equation}
\label{5-03}
\begin{aligned}
I_{[0,T]} (u) \; & =\; \int_0^T dt \int_{0}^1 \sigma(u_t)\,
( \nabla  H_t)^2\, dx \; +\; \int_0^T
\mf c_{\mf b} \big(\, u_t(1)\,,\, H_t(1)\, \big)\; dt \\
\; & +\; \int_0^T
\mf c_{\mf a} \big(\, u_t(-1)\,,\, H_t(-1)\, \big)\; dt \;.
\end{aligned}
\end{equation}
\end{proposition}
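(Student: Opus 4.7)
My plan is to identify $H$ as the unique maximizer of the strictly concave functional $H \mapsto J_H(\pi)$, verify that the corresponding first-order optimality condition is precisely the elliptic problem \eqref{5-01b}, and then substitute the optimal $H$ back to evaluate $I_{[0,T]}(\pi) = J_H(\pi)$.

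\textbf{Construction and uniqueness of $H$.} On $[0,\mf t]$, since $u$ solves \eqref{10}, one checks directly that $H \equiv 0$ satisfies \eqref{5-01b}: the identities $\mf p_{\mf b}(a,0) = \mf b - a$ and $-\mf p_{\mf a}(a,0) = a - \mf a$ reproduce the hydrodynamic boundary conditions. On $(\mf t, T]$, condition (b) of Definition \ref{d04} supplies $\sigma(u_t) \ge \sigma(\epsilon) > 0$, so the elliptic operator $G \mapsto -2\nabla(\sigma(u_t)\nabla G)$ is uniformly elliptic with smooth coefficients; moreover, since $\partial_M \mf p_\varrho(a,M) > 0$, the boundary operators are strictly monotone in $G$. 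A standard application of the direct method of the calculus of variations to a strictly convex coercive functional on $\mc H^1(\Omega)$ whose Euler--Lagrange equation is \eqref{5-01b} then yields a unique minimizer $H_t$. For the regularity $H \in C^{1,1}(]\mf t, T]\times [-1,1])$, I would combine Schauder theory at fixed $t$ with the implicit function theorem in a H\"older class in $t$, exploiting the smoothness of the coefficients $\sigma(u_t)$, $D(u_t)\nabla u_t$, $\partial_t u_t$ inherited from $u$, and the invertibility of the linearized elliptic operator guaranteed by the strict monotonicity.

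\textbf{Evaluation of $J_H(\pi)$.} Test \eqref{5-01b} against $H_t$ and integrate by parts in $x$:
\begin{equation*}
\int_{-1}^1 \partial_t u_t\, H_t\, dx
\;=\; \bigl[\, H_t\, \{D(u_t)\nabla u_t - 2\sigma(u_t)\nabla H_t\}\,\bigr]_{-1}^{1}
\;-\; \int_{-1}^1 D(u_t)\nabla u_t\, \nabla H_t\, dx
\;+\; 2\int_{-1}^1 \sigma(u_t)(\nabla H_t)^2\, dx\;.
\end{equation*}
Using the boundary conditions of \eqref{5-01b}, the boundary contribution equals $H_t(1)\,\mf p_{\mf b}(u_t(1), H_t(1)) + H_t(-1)\,\mf p_{\mf a}(u_t(-1), H_t(-1))$. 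Integrating over $t \in [0,T]$ and using absolute continuity of $t \mapsto \<u_t, H_t\>$ (the integration-by-parts property for trajectories with finite rate functional announced in the introduction) to rewrite $L_{[0,T]}(H) = \int_0^T \<\partial_t u_t, H_t\>\, dt$, substitute this identity into the definition of $J_H(\pi)$. The contributions involving $D(u)\nabla u \cdot \nabla H$ cancel, leaving
\begin{equation*}
J_H(\pi) \;=\; \int_0^T\!\!\int_{-1}^1 \sigma(u_t)(\nabla H_t)^2\, dx\, dt
\;+\; \int_0^T \bigl[\, H_t(1)\,\mf p_{\mf b}(u_t(1),H_t(1)) - B^R\,\bigr]\, dt
\;+\; \int_0^T \bigl[\, H_t(-1)\,\mf p_{\mf a}(u_t(-1),H_t(-1)) - B^L\,\bigr]\, dt\;.
\end{equation*}
A direct computation from \eqref{5-02} and \eqref{11} yields the pointwise algebraic identity $M\,\mf p_\varrho(a, M) - B_\varrho(a, M) = \mf c_\varrho(a, M)$ (where $B_\varrho$ denotes the contribution to $B$ at the boundary with reservoir density $\varrho$), which produces \eqref{5-03}. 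Strict concavity of $H \mapsto J_H(\pi)$ — the term $-\int \sigma(u)(\nabla H)^2$ is strictly concave and $B$ is convex in $H$ via its $e^{\pm H}$ dependence — ensures that this critical point is the unique global maximum, so $I_{[0,T]}(u) = J_H(u)$ coincides with the right-hand side of \eqref{5-03}.

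\textbf{Main obstacle.} The principal technical difficulty is the matching of $H$ at $t = \mf t$: on $[0, \mf t]$ we take $H \equiv 0$, while on $(\mf t, T]$ the function $H$ is smooth but generally nonzero immediately after $\mf t$, so the combined $H$ need not belong to $C^{1,1}([0,T]\times [-1,1])$. To rigorously conclude that $H$ attains the supremum over $C^{1,1}$ test functions on $[0,T]$, one must approximate by functions $H^n \in C^{1,1}([0,T]\times[-1,1])$ which vanish in a shrinking neighbourhood of $[0,\mf t]$ and converge to $H$ on $(\mf t, T]$ with $J_{H^n}(\pi) \to J_H(\pi)$. The regularity $H \in C^{1,1}(]\mf t, T]\times[-1,1])$ asserted by the proposition is precisely what makes such an approximation feasible.
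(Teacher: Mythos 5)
Your proposal is correct in substance, and its core --- concavity of $H\mapsto J_H(\pi)$, identification of \eqref{5-01b} as the first-order (Euler--Lagrange) condition, substitution of the critical point and the algebraic identity $M\,\mf p_\varrho(a,M)-B_\varrho(a,M)=\mf c_\varrho(a,M)$ producing $\mf c_\varrho$ --- is exactly the computation the paper delegates to the proof of Proposition 2.6 in \cite{fgln}. The two surrounding steps are handled differently. For the initial segment the paper never patches the maximizer across $t=\mf t$: it invokes Corollary \ref{cor: separtion I} (additivity of the cost over time intervals) together with Corollary \ref{cor: I hydro nul} (zero cost of the hydrodynamic segment) to get $I_{[0,T]}(\pi|\gamma)=I_{[\mf t,T]}(\pi)$, and then solves the variational problem only on $[\mf t,T]$, where by Lemma \ref{lem: regularite H} the maximizer is smooth up to the left endpoint and hence an admissible test function; this removes entirely the ``main obstacle'' you single out. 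Your cutoff approximation can be made to work, but be aware it needs more than the stated regularity $H\in C^{1,1}(]\mf t,T]\times[-1,1])$: to see that the cutoffs lose nothing you must control $\int_{-1}^1\sigma(u_t)(\nabla H_t)^2dx$ and the boundary terms as $t\downarrow \mf t$, and on $[0,\mf t]$, where $u$ is only a weak solution, rewriting $L_{[0,T]}(H)=\int_0^T\<\partial_t u_t,H_t\>\,dt$ requires extending Proposition \ref{th: IPP}, which is stated for product test functions $h(t)g(x)$; both points evaporate once one restricts to $[\mf t,T]$ as the paper does, and you could simplify your argument accordingly. For existence, uniqueness and regularity of $H_t$, the paper (Lemma \ref{lem: regularite H}) exploits the one-dimensional setting: the bulk equation is integrated in $x$ to give an explicit expression for $H_t$, whose joint smoothness is read off directly, and uniqueness follows from an energy argument using the strict monotonicity of $\mf p_\varrho$ in $M$; your route via the direct method, Schauder theory and the implicit function theorem is heavier but sound, and has the advantage of not being tied to dimension one.
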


Mind that $H_t=0$ for $0<t\le \mf t$ since $u$ follows the
hydrodynamic equation on the interval $[0,\mf t]$. By the construction
presented in Section \ref{sec6}, the time derivative of 
$H$ at $t=\mf t$ might be discontinuous.

\subsection*{Comments}

Since the first papers on the subject \cite{dv, kov}, robust methods
and tools have been developed to derive dynamical large deviations
principles for the empirical measures of diffusive interacting
particle systems \cite{kl, qrv, blm, flm, fgln}.  With these tools,
the proof of the large deviations principle boils down, essentially,
to the $I_{[0,T]}(\cdot|\gamma)$-density of a set of smooth
trajectories. 

The proof of the $I_{[0,T]}(\cdot|\gamma)$-density of the set
$\Pi_\gamma$, introduced in Definition \ref{d04}, presents two
novelties which might be helpful for other dynamics, as
reaction-diffusion models without the concavity assumption on the
creation and destruction rates \cite{JLV93, BL12, FLT19}.

Here, as in \cite{qrv, blm, flm}, the rate functional
$I_{[0,T]}(\cdot|\gamma)$ is not convex.  The lack of convexity
complicates the second step in the proof of the
$I_{[0,T]}(\cdot|\gamma)$-density, handled in Lemma \ref{lc02}. There,
we approximate a trajectory $\pi(t,dx) = u(t,x) dx$ by
$\pi^\epsilon(t,dx) = u^\epsilon(t,x) dx$, where
$u^\epsilon = (1-\epsilon) u + \epsilon \rho$ and $\rho$ is the
solution of the hydrodynamic equation. If $I_{[0,T]}(\cdot|\gamma)$
were convex, to proof that $I_{[0,T]}(\pi^\epsilon|\gamma)$ converges
to $I_{[0,T]}(\pi|\gamma)$ would be straightforward. One direction
would be obtained by the lower semi-continuity of the functional and
the other from the convexity.

The proof of this step presented in \cite{qrv, blm} (cf. \cite[Lemma
5.5]{blm}) for non-convex rate functionals is based on the
representation of a linear functionals given in \cite[Lemma 4.8]{blm},
and relies on Lemmata 5.2 and 5.3 in \cite{blm}. We present here a
much simpler and direct proof which might be useful in other contexts.

The second difficulty in the proof of the
$I_{[0,T]}(\cdot|\gamma)$-density arises, as in \cite{fgln}, from the
presence of a boundary term in the rate
functional. Propositions~\ref{p02}--\ref{p04} state that the cost
$I_{[0,T]}(\pi)$ of a trajectory $\pi\in D([0,T], \mc M)$ can be
expressed as the sum $I^{(1)}_{[0,T]}(\pi) + I^{(2)}_{[0,T]}(\pi)$,
where the first term accounts for the cost due to evolution in the
interior of the interval $[-1,1]$, while the second one for the
evolution at the boundary.

This decomposition is needed in Lemma \ref{l06} for trajectories
bounded away from $0$ and $1$ and presented in Propositions~\ref{p02},
\ref{p03} under this assumption. In \cite{fgln} (cf. equation (4.2))
the trajectory is assumed to be continuous and smooth in time. The
lack of regularity of the trajectory is responsible for the presence
of cross terms and a convoluted expression for the rate
functionals. Assuming regularity, Propositions~\ref{p04} presents a
similar formula to the one obtained \cite{fgln}.

The decomposition of the rate functional requires the following
property of trajectories.  Fix a path $\pi (t,dx) = u(t,x)\, dx$ in
$D([0,T], \mc M)$ with finite rate function, $I_{[0,T]} (u)<\infty$.
It is known, at least since \cite{blm}, that such a path is weakly
continuous in time. More precisely, that for each $g\in C^1([-1,1])$,
the map $t\mapsto \<\pi_t,g\>$ is uniformly continuous (cf. Lemma
\ref{l04}). Proposition \ref{th: IPP} asserts that this map is,
actually, absolutely continuous and its derivative belongs to
$L^1$. This property permits an integration by parts on the right-hand
side of \eqref{07} when $H(t,x) = h(t)\, g(x)$ for some
$h\in C^1([0,T])$, $g\in C^1([-1,1])$ and plays a crucial role in the
decomposition of the rate functional.

\begin{rem}
As in \cite{flm, fgln} and in contrast to \cite{kov, blm}, the large
deviations principle is formulated here for the empirical measure and
not for the empirical density. More precisely, the empirical measure
$\pi(t)$, defined in \eqref{13}, is a sum of Dirac measures, while in
\cite{kov, blm} $\pi(t)$ is defined as a function taking values in the
set $\{0 , 1\}$. Defining $\pi(t)$ as a singular measure requires to
prove that $I_{[0,T]} (\pi) = +\infty$ if $\pi_t$ is not absolutely
continuous with respect to the Lebesgue measure. The proof presented
in Subsection 6.3 of \cite{flm} applies to the present context.
\end{rem}

The paper is organized as follows. In Section \ref{sec3}, we prove the
hydrodynamic limit. In Section \ref{sec4}, we present the main
properties of the rate functional $I_{[0,T]}$ and, in Section
\ref{sec5}, its decomposition. In Section \ref{sec6}, we prove the
$I_{[0,T]}$-density of the set $\Pi_\gamma$, and, in Section
\ref{sec7} the large deviations principle. In Section \ref{sec8}, we
discuss uniqueness of weak solutions of the equation \eqref{10}.

\section{Hydrodynamic limit}
\label{sec3}

In Section \ref{sec8}, we prove uniqueness of weak solutions of the
hydrodynamic equation. The proof of Theorem \ref{t01} is similar to
the classical one presented in Section 5 in \cite{kl}. The slight
difference lies in the energy estimate.

\begin{lem}
\label{l03}
Fix a profile $\rho_0\colon [-1,1] \to [0,1]$, and a sequence of
probability measure $(\mu^N)_{N \in \mathbb{N}}$ associated to
$\rho_0$. Consider a dense family of functions $\{H_l\}_{l \geq 1}$ in
$C^{0,1}([0,1]\times [-1,1])$. There exist a constant $K_0$ such that
\begin{align*}
\underset{N \to + \infty }{\mbox{ lim sup }} \mathbb{E}_{\mu^N}
\Big [ \, \underset{1 \leq j \leq k}{\max} \Big\{
\int_0^T W_N(H_j(s), \eta_s) \, ds\Big \}
\,\Big ] \, \leq\,  K_0
\end{align*}
for every $k \geq 1$, where
\begin{align*}
W_N(H, \eta)  
=\sum_{x}
H\left ( \frac{x}{N} \right ) (\eta(x)- \eta(x+1))- \frac{1}{2 A N}\sum_{x}
H\left ( \frac{x}{N} \right )^2 (\eta(x)- \eta(x+1))^2 \;,
\end{align*}
the sum is performed over all $-N+1 \le x \le N-2$,
and $A = \min \{ 1,1+2a\}$.
\end{lem}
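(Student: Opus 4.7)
The plan is to reduce the estimate to a bound on an exponential moment, then control that moment by Feynman--Kac together with a careful use of the symmetry of the reference product measure $\nu^N_{1/2}$ under bond exchanges. The whole strategy is standard in the vein of Kipnis--Landim energy estimates; the point is that the correction $-(2AN)^{-1}\sum H(x/N)^2[\eta(x)-\eta(x+1)]^2$ in $W_N$ is tailored exactly to absorb what Young's inequality produces.

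\emph{Step 1 (reduction to exponential moments).} Using $\max_{1\le j\le k} a_j \le (1/N)\log \sum_j e^{Na_j}$, set $X_j:=\int_0^T W_N(H_j(s),\eta_s)\,ds$ and estimate
\[
\mathbb{E}_{\mu^N}\Big[\max_{j}X_j\Big]\;\le\;\frac{1}{N}\mathbb{E}_{\mu^N}\Big[\log \sum_{j=1}^k e^{N X_j}\Big].
\]
By the Donsker--Varadhan entropy inequality applied with reference measure $\nu^N_{1/2}$,
\[
\mathbb{E}_{\mu^N}\Big[\log \sum_j e^{N X_j}\Big]\;\le\; H(\mu^N|\nu^N_{1/2}) \;+\; \log \mathbb{E}_{\nu^N_{1/2}}\Big[\sum_j e^{N X_j}\Big].
\]
Since $\mu^N$ is a probability measure on $\{0,1\}^{\Lambda_N}$, $H(\mu^N|\nu^N_{1/2})\le |\Lambda_N|\log 2\le 2N\log 2$. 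Hence it suffices to show that $\frac{1}{N}\log \mathbb{E}_{\nu^N_{1/2}}[e^{N X_j}]$ is bounded, uniformly in $j$ and $N$, by a constant; the factor $\log k$ vanishes in the limit.

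\emph{Step 2 (Feynman--Kac).} Apply the standard Feynman--Kac bound (cf. \cite[Lemma A1.7.2]{kl}) with time-dependent potential $V_s=N W_N(H_j(s),\cdot)$:
\[
\log\mathbb{E}_{\nu^N_{1/2}}\big[e^{N X_j}\big]\;\le\;\int_0^T \lambda_N(s)\, ds, \qquad
\lambda_N(s)\;=\;\sup_{\|f\|_{L^2(\nu^N_{1/2})}=1}\Big\{\langle V_s, f^2\rangle_{\nu^N_{1/2}} \;-\; \mathcal{D}_N^s(f)\Big\},
\]
where $\mathcal{D}_N^s(f)=\langle(-L_N^s)f,f\rangle_{\nu^N_{1/2}}$ is the Dirichlet form of the symmetric part of $L_N$. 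The boundary piece $L_{N,b}$ is not reversible with respect to $\nu^N_{1/2}$, but $(L_{N,b})^s$ is itself the generator of a Markov chain, so its Dirichlet form contribution is nonnegative and may be dropped; the bulk piece $L_{N,0}$ is reversible (its rates $r_{x,x+1}(\eta)$ depend only on sites different from $x, x+1$, hence are invariant under the exchange), so we have the lower bound
\[
\mathcal{D}_N^s(f)\;\ge\;\mathcal{D}_{N,0}(f)\;=\;\frac{N^2}{2}\sum_{x}\int r_{x,x+1}(\eta)\,[f(\eta^{x,x+1})-f(\eta)]^2\, d\nu^N_{1/2}\;\ge\;\frac{A\,N^2}{2}\sum_{x}\int [f(\eta^{x,x+1})-f(\eta)]^2\,d\nu^N_{1/2}.
\]
The goal becomes showing $\lambda_N(s)\le 0$.

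\emph{Step 3 (symmetrization and Young's inequality).} Fix $H=H_j(s)$ and write $a_x:=\eta(x)-\eta(x+1)$, $H_x:=H(x/N)$. Since $\nu^N_{1/2}$ is invariant under $\eta\mapsto\eta^{x,x+1}$ and $a_x$ changes sign,
\[
\int H_x\, a_x\, f(\eta)^2\, d\nu^N_{1/2} \;=\; \tfrac{1}{2}\int H_x\, a_x\, \big[f(\eta)^2-f(\eta^{x,x+1})^2\big]\,d\nu^N_{1/2}\;=\; \tfrac{1}{2}\int H_x\, a_x\,[f-f^{x,x+1}]\,[f+f^{x,x+1}]\,d\nu^N_{1/2}.
\]
Applying $uv\le \tfrac{\epsilon}{2}u^2+\tfrac{1}{2\epsilon}v^2$ and then $(f+f^{x,x+1})^2\le 2(f^2+(f^{x,x+1})^2)$ combined with the exchange symmetry gives
\[
\int H_x\,a_x\, f^2\,d\nu^N_{1/2}\;\le\; \frac{\epsilon}{4}\int[f-f^{x,x+1}]^2\,d\nu^N_{1/2}\;+\;\frac{1}{\epsilon}\int H_x^2\, a_x^2\, f^2\,d\nu^N_{1/2}.
\]
Multiplying by $N$, summing over $x$, and choosing $\epsilon=2AN$ makes the first term equal to $\mathcal{D}_{N,0}(f)$, so
\[
N\int \sum_x H_x\, a_x\, f^2\, d\nu^N_{1/2} \;\le\; \mathcal{D}_{N,0}(f)\;+\;\frac{1}{2A}\int \sum_x H_x^2\, a_x^2\, f^2\,d\nu^N_{1/2}.
\]
Subtracting $\frac{1}{2A}\int\sum_x H_x^2 a_x^2 f^2\,d\nu^N_{1/2}$ (the contribution of the correction term in $W_N$, since $V_s=NW_N$) from both sides gives $N\langle W_N(H,\cdot), f^2\rangle_{\nu^N_{1/2}} \le \mathcal{D}_{N,0}(f)\le\mathcal{D}_N^s(f)$, whence $\lambda_N(s)\le 0$. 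Combining with Steps 1--2 yields $\limsup_N \mathbb{E}_{\mu^N}[\max_j X_j]\le 2\log 2=:K_0$.

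The principal obstacle is purely bookkeeping in Step 3: one must track constants so that the Young parameter $\epsilon=2AN$ leaves exactly the coefficient $1/(2A)$, which is precisely what the definition of $W_N$ has absorbed via its correction term; the appearance of $A=\min\{1,1+2a\}$ is the only place in the argument where the gradient rates $r_{x,x+1}$ enter nontrivially.
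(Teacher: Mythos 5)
Your strategy -- entropy inequality with respect to a reference product measure, Feynman--Kac, and the exchange symmetrization plus Young's inequality with the choice $\epsilon=2AN$, using that $A=\min\{1,1+2a\}$ is a \emph{lower} bound for the exchange rates (including the two extreme bonds, whose rates involve $\mathfrak a,\mathfrak b$ but are still independent of $\eta(x),\eta(x+1)$) -- is exactly the argument the paper has in mind when it refers to Lemma 5.7.3 of Kipnis--Landim, and Steps 1 and 3 are correct as written.

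There is, however, a genuine flaw in Step 2, in the sentence asserting that the contribution of $L_{N,b}$ "is nonnegative and may be dropped'' because $(L_{N,b})^s$ is a Markov generator. The measure $\nu^N_{1/2}$ is not invariant for $L_{N,b}$ (unless $\mathfrak a=\mathfrak b=1/2$), and the symmetrization of a generator with respect to a non-invariant measure is in general \emph{not} a Markov generator (it does not annihilate constants), nor is its quadratic form nonpositive. Concretely, since $\nu^N_{1/2}$ is invariant under the flips,
\begin{equation*}
\langle f, L_{N,b} f\rangle_{\nu^N_{1/2}}
\;=\;\frac{N}{2}\int \big[\,(1-2\eta(-N+1))(1-2\mathfrak a)+(1-2\eta(N-1))(1-2\mathfrak b)\,\big]\, f^2\, d\nu^N_{1/2}
\;-\;\frac{1}{2}\,D_{N,b}(f)\;,
\end{equation*}
where $D_{N,b}$ is the nonnegative boundary Dirichlet-type form; the first term is generically nonzero and can be strictly positive, of order $N\Vert f\Vert^2_{L^2(\nu^N_{1/2})}$ (a one-site computation with $\mathfrak a\neq 1/2$ already shows $\langle f,L_{N,b}f\rangle_{\nu_{1/2}}>0$ for suitable $f$). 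Hence your conclusion $\lambda_N(s)\le 0$, and therefore the value $K_0=2\log 2$, is not justified. The repair is easy and does not change the architecture: bound the first term by $C_0 N\Vert f\Vert^2$ with $C_0=\tfrac12(|1-2\mathfrak a|+|1-2\mathfrak b|)$, so that $\lambda_N(s)\le C_0N$ and $K_0=2\log 2+C_0T$, which suffices since the lemma only asks for some finite constant; alternatively, follow the standard route of the quoted literature and take as reference a product measure $\nu^N_{\varrho(\cdot)}$ with a smooth profile satisfying $\varrho(-1)=\mathfrak a$, $\varrho(1)=\mathfrak b$, which makes the boundary terms harmless at the price of an $O(N)$ correction in the bulk (the bulk reversibility you use then holds only up to terms produced by the non-constant profile, again absorbed into $K_0$).
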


Since $r_{x,x+1} \leq A$ and the Bernoulli product measure is
reversible for the process with generator $L_{N,0}$ (\cite{{flm}}),
the proof is similar to that of Lemma 5.7.3 in \cite{kl}. As stated in
\cite[Theorem 5.7.1]{kl}, Lemma \ref{l03} ensures that all limit
points of the sequence $\bb Q^N_{\mu^N}$ are concentrated on paths
with finite energy.

\section{The rate function}
\label{sec4}

In this section, we present some properties of the rate function
$I_{[0,T]}(.)$ and the energy $\mc Q(\cdot)$. These properties are
crucial for proving Theorem \ref{t02}.

\subsection*{The energy $\mathcal{Q}$}

The proof of the next result can be found in
Section 4 of \cite{blm}.

\begin{proposition}
\label{p06}
The functional $\mathcal{Q}$ is convex and lower semicontinuous. Let
$\pi \in D([0,T], \mathcal{M})$ such that $\pi(t,dx)=u(t,x)dx$. Then $u$
has finite energy if, and only if,
$u \in L^2([0,T],\mathcal{H}^1(\Omega))$ and
$\displaystyle \int_0^T\int_{-1}^1 \frac{\vert\nabla u(t,x)\vert^2}{
\chi(u(t,x))}dxdt<+ \infty$. In this case
$\displaystyle \mathcal{Q}( \pi)=\int_0^T\int_{-1}^1 \frac{\vert\nabla
u(t,x)\vert^2}{\chi(u(t,x))}dxdt$.
\end{proposition}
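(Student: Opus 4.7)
\textbf{Proof plan for Proposition \ref{p06}.}
The plan is to read off convexity and lower semicontinuity directly from the variational definition, then identify $\mc Q(\pi)$ with the weighted Dirichlet integral $\int_0^T\!\!\int_{-1}^1 |\nabla u|^2/\chi(u)\,dx\,dt$ by combining a Riesz-representation argument (giving the upper bound for the integral) with a completing-the-square estimate (giving the upper bound for $\mc Q$).

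For each $G\in C^{1,2}([0,T]\times [-1,1])$ I would first examine
\[
F_G(\pi) \;=\; 2\int_0^T \<u_t,\nabla G_t\>\, dt \;-\; \int_0^T \int_{-1}^1 \chi(u)\, G^2\, dx\, dt,
\]
with $\pi(t,dx)=u(t,x)\,dx$. The first term is linear in $\pi$, and the second, being $\int (u^2-u)G^2$ with $a\mapsto a^2-a$ convex, is convex in $u$; hence $F_G$ is convex in $\pi$, and $\mc Q=\sup_G F_G$ is convex. For lower semicontinuity in the Skorohod topology on $D([0,T],\mc M)$, I would note that for smooth $G$ the linear term passes to the limit at each continuity time of the Skorohod limit, while $\int \chi(u)G^2$ is upper semicontinuous under weak convergence of the uniformly bounded densities (by concavity of $\chi$), so each $F_G$ is l.s.c. at a.e.\ time; an application of Fatou gives l.s.c.\ of the time-integral, and $\mc Q$ inherits l.s.c.\ as a supremum of l.s.c.\ functionals.

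For the identification under $\mc Q(\pi)<\infty$, my plan is the classical quadratic trick: replacing $G$ by $\lambda G$ in the definition and maximizing the resulting quadratic over $\lambda\in\bb R$ yields
\[
\Big|\int_0^T \<u_t,\nabla G_t\>\, dt\Big|^{2} \;\le\; \mc Q(\pi)\, \int_0^T\int_{-1}^1 \chi(u)\,G^2\, dx\, dt
\]
for every smooth $G$. Hence the linear map $L(G) = \int_0^T \<u_t,\nabla G_t\>\, dt$, restricted to $G$ with compact support in the interior (so no boundary terms appear on integration by parts), extends continuously to $L^2(\chi(u)\,dx\,dt)$ with norm bounded by $\mc Q(\pi)^{1/2}$. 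By Riesz representation I would produce $\phi\in L^2(\chi(u)\,dx\,dt)$ with $\int \phi^2 \chi(u)\,dx\,dt \le \mc Q(\pi)$ and $L(G) = \int \phi\, G\, \chi(u)\,dx\,dt$; integration by parts on this identity then gives $\phi\,\chi(u) = -\nabla u$ distributionally, so $\nabla u \in L^2$ (since $\chi(u)\le 1/4$), $u \in L^2([0,T],\mc H^1)$, and
\[
\int_0^T\int_{-1}^1 \frac{|\nabla u|^2}{\chi(u)}\, dx\, dt \;=\; \int \phi^2 \chi(u)\, dx\, dt \;\le\; \mc Q(\pi).
\]

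The reverse bound comes from completing the square. Assuming $u\in L^2([0,T],\mc H^1)$ with finite Dirichlet integral, I would integrate $F_G(\pi)$ by parts and apply the pointwise inequality $2|\nabla u\cdot G| \le |\nabla u|^2/\chi(u) + \chi(u)G^2$ to obtain
\[
F_G(\pi) \;=\; -2 \int \nabla u\cdot G\, dx\, dt - \int \chi(u)\,G^2\, dx\, dt \;\le\; \int \frac{|\nabla u|^2}{\chi(u)}\, dx\, dt,
\]
and saturate this bound by approximating the formal maximizer $G^\star = -\nabla u/\chi(u)$ by smooth test functions. Combined with the previous inequality this yields the stated identity. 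The chief technical obstacle I foresee is the careful treatment of boundary contributions from the integrations by parts when $G(t,\pm1)\neq 0$: the strategy is to run the Riesz argument first on $G$ supported away from $\{-1,1\}$ to produce the weak gradient, then invoke the $\mc H^1$-trace theorem and a density approximation to show that the supremum over all $G \in C^{1,2}([0,T]\times [-1,1])$ is still attained by the Dirichlet integral, along the lines of Section~4 of \cite{blm}.
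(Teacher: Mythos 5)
Your core argument---convexity and lower semicontinuity read off from the variational formula, the quadratic trick giving $\big|\int_0^T\langle u_t,\nabla G_t\rangle\,dt\big|^2\le \mathcal Q(\pi)\int_0^T\!\int_{-1}^1\chi(u)G^2\,dx\,dt$, Riesz representation in $L^2(\chi(u)\,dx\,dt)$ to produce the weak gradient together with the bound $\int|\nabla u|^2/\chi(u)\le\mathcal Q(\pi)$, and completing the square for the reverse inequality---is precisely the proof the paper points to (Section 4 of \cite{blm}), so up to your last step there is nothing to object to.

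The final step, however, is a genuine gap: you cannot show that the supremum over all $G\in C^{1,2}([0,T]\times[-1,1])$ with nonvanishing boundary values is still given by the Dirichlet integral, because that statement is false. Take $u\equiv\tfrac12$, so $\nabla u=0$ and the Dirichlet integral vanishes, and $G(t,x)=M\,g_\epsilon(x)$ with $g_\epsilon$ smooth, $0\le g_\epsilon\le 1$, $g_\epsilon(1)=1$, $g_\epsilon=0$ on $[-1,1-\epsilon]$. Then
\begin{equation*}
2\int_0^T\langle u_t,\nabla G_t\rangle\,dt \;=\; M\,T\;,\qquad
\int_0^T\!\!\int_{-1}^1\chi(u)\,G^2\,dx\,dt\;\le\;\tfrac14\,M^2\,T\,\epsilon\;,
\end{equation*}
so choosing $\epsilon=M^{-2}$ and letting $M\to\infty$ the supremum is $+\infty$. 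The same boundary-spike computation (integrating by parts and using $\nabla u\in L^2$) shows that the supremum over boundary-nonvanishing test functions is infinite for every $u\in L^2([0,T],\mathcal H^1)$ whose trace at $x=\pm1$ is not almost everywhere zero. Consequently the identity $\mathcal Q(\pi)=\int_0^T\!\int_{-1}^1|\nabla u|^2/\chi(u)\,dx\,dt$ can only hold when the supremum defining $\mathcal Q$ is restricted to test functions vanishing at (e.g.\ compactly supported in) the spatial boundary; this is the convention of \cite{blm}, and it is the reading the displayed definition of $\mathcal Q$ must be given for Proposition \ref{p06} to be true. The remedy for your plan is therefore not a trace-theorem/density argument to "recover" boundary values, but simply to delete that step and carry out both the Riesz argument and the completing-the-square bound with $G$ compactly supported in $(-1,1)$.
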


\begin{corollary}
\label{cor: hydro ener fini}
The weak solution of the hydrodynamic equation has finite energy.
\end{corollary}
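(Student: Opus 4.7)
The plan is to extract finite energy for the weak solution directly from the microscopic energy estimate of Lemma \ref{l03}, combined with the hydrodynamic convergence $\pi^N\to\pi$ provided by Theorem \ref{t01}. Pick a countable dense family $\{H_j\}_{j\geq 1}\subset C^{0,1}([0,T]\times[-1,1])$. For each fixed test function $H\in C^{0,1}$, I would identify the $N\to\infty$ limit of $\int_0^T W_N(H,\eta^N_s)\,ds$ as a functional of $u$. A discrete summation by parts on the linear term yields $\int_0^T\langle u_s,\nabla H_s\rangle\,ds$ (modulo boundary contributions bounded uniformly by $T\|H\|_\infty$), while the quadratic term is handled via the identity $(\eta(x)-\eta(x+1))^2 = \eta(x)+\eta(x+1)-2\eta(x)\eta(x+1)$ and the standard one-block and two-block replacement estimates of \cite[Chapter 5]{kl}, whose local average converges to $2\chi(u)$. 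Thus $\int_0^T W_N(H,\eta^N_s)\,ds$ converges in probability to
\[
\Phi(u,H)\;:=\;\int_0^T\langle u_s,\nabla H_s\rangle\,ds\;-\;\frac{1}{A}\int_0^T\!\!\int_{-1}^1\chi(u_s)\,H^2\,dx\,ds,
\]
up to a bounded boundary term that plays no role in what follows.

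Combining this convergence with Lemma \ref{l03}, and using the crude upper bound $W_N(H,\eta)\le C(H)\,N$ to ensure the uniform integrability needed to pass the expectation of the maximum through the limit, I obtain for every $k\geq 1$
\[
\max_{1\leq j\leq k}\Phi(u,H_j)\;\leq\;\widetilde K_0
\]
for some finite $\widetilde K_0$. Density of $\{H_j\}$ in $C^{0,1}$ extends the bound to every $H\in C^{0,1}$. Now replacing $H$ by $cH$ and optimizing over $c\in\mathbb{R}$ converts this into the homogeneous-of-degree-two inequality
\[
2\int_0^T\langle u_s,\nabla H_s\rangle\,ds\;-\;\int_0^T\!\!\int_{-1}^1\chi(u_s)\,H^2\,dx\,ds\;\leq\;\frac{4\widetilde K_0}{A},
\]
valid for every $H\in C^{1,2}([0,T]\times[-1,1])\subset C^{0,1}$. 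Taking the supremum over $H$ gives $\mathcal{Q}(\pi)\leq 4\widetilde K_0/A<\infty$, and Proposition \ref{p06} concludes that $u$ has finite energy.

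The main technical obstacle is the identification of the macroscopic limit of the quadratic term $(1/N)\sum_x H(x/N)^2(\eta(x)-\eta(x+1))^2$. Because this is not a function of the empirical density alone, one must invoke the replacement lemma of \cite[Chapter 5]{kl} to substitute the local empirical product by its product-measure expectation $2\chi(u)$. A subordinate difficulty is keeping track of the $O(1)$ boundary contributions from the discrete summation by parts; these are uniformly bounded and therefore harmless for proving finiteness of $\mathcal{Q}(\pi)$. The passage from the expectation bound in Lemma \ref{l03} to a deterministic bound on $\Phi(u,H_j)$ uses convergence in probability together with the fact that the limit $\pi$ is deterministic by Theorem \ref{t01}.
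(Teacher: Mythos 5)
Your route is genuinely different from the paper's. The paper proves this corollary analytically: using the boundedness of the weak solution and the continuity (and strict positivity) of $D$, it adapts the PDE argument of Lemma B.5 of \cite{fgln} (via Lemma B.2 there), without returning to the particle system. You instead transfer the microscopic energy estimate of Lemma \ref{l03} to the limit through Theorem \ref{t01}, in the spirit of \cite[Theorem 5.7.1]{kl} and of Section 4 of \cite{blm}. That strategy can be made to work, and it has the merit of applying to any initial profile for which the hydrodynamic limit holds, but two steps are wrong as written, and the first one is precisely the delicate point of the boundary-driven setting.

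First, the boundary terms from the discrete summation by parts are not harmless. They are linear in $H$, of order $T\|H\|_\infty$, so after replacing $H$ by $cH$ and optimizing in $c$ they survive as an additive error of order $\|H\|_\infty$ in the bound for $2\int_0^T \<u_t,\nabla H_t\>\,dt-\int_0^T\int_{-1}^1\chi(u_t)H_t^2\,dx\,dt$; a bound of the form $C_1+C_2\|H\|_\infty$ has infinite supremum over $H$, so it does not yield $\mathcal{Q}(\pi)<\infty$. (This is not a technicality: already for $u\equiv 1/2$, test functions with tall narrow spikes at $x=\pm 1$ make $2\int\<u,\nabla H\>-\int\chi(u)H^2$ arbitrarily large, so no argument tolerating an $O(\|H\|_\infty)$ error can conclude.) The energy has to be evaluated, as in \cite{blm} and Proposition \ref{p06}, with test functions vanishing at (say, compactly supported in) the spatial boundary; choosing the dense family inside that class makes the microscopic boundary terms identically zero for large $N$, and then your scaling argument closes. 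Second, the uniform-integrability justification is off: to carry the expectation bound of Lemma \ref{l03} over to the deterministic limit you must control the \emph{negative} part of $\int_0^T W_N(H,\eta_s)\,ds$, and an $O(N)$ upper bound gives no uniform integrability in any case; the correct remark is that, after summation by parts, this time integral is bounded by a constant $C(H)$ uniformly in $N$ and in the trajectory, so the transfer is immediate. Finally, note that identifying the quadratic term with $(1/A)\int_0^T\int\chi(u)H^2$ is where the real work lies: it requires the one- and two-block replacement estimates for the boundary-driven dynamics in mild contact (as in \cite{flm}), applied inside the expectation of the maximum and followed by the limit in the mollification parameter; \cite[Chapter 5]{kl}, written for the periodic case, does not cover this by itself, although the needed estimates are part of the Section \ref{sec3} machinery. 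With these repairs your probabilistic proof is a legitimate alternative to the paper's analytic one.
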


\begin{proof}
In view of Lemma B.2, the continuity of $D(\cdot)$ and the boundedness
of the weak solution of the hydrodynamic equation, the proof of
Corollary \ref{cor: hydro ener fini} is similar to the one of Lemma
B.5 in \cite{fgln}.
\end{proof}

\subsection*{The rate functional $I_{[0,T]}$}

In this section, we present certain properties of the rate function
$I_{[0,T]}$: we establish that it is lower semicontinuity and that its
level sets are compact. The proof of the next result is similar to the
one of Lemma 4.1 in \cite{flm}, Lemma 3.1 in \cite{fgln}.

\begin{lem}
\label{l04}
Fix $T>0$. For each $M>0$, $g$ in $C^{1}([0,1])$ and $\epsilon>0$,
there exists $\delta>0$ such that
\begin{equation*}
\sup_{|t-s|\le \delta} \; \sup_{\pi : I_{[0,T]}  (\pi) \le M}\,
\big| \, \< \pi_t, g\> - \< \pi_s , g\> \, \big| \;\le\; \epsilon\;.
\end{equation*}
In particular, $\pi$ belongs to $C([0,T],\mc M^0)$.
\end{lem}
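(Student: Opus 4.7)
My plan follows the variational approach of \cite[Lemma 4.1]{flm} and \cite[Lemma 3.1]{fgln}. Let $\pi \in D([0,T],\mathcal{M})$ with $I_{[0,T]}(\pi) \le M$. Since $J_H(\pi) = +\infty$ when $\pi \notin D_{\mc E}([0,T],\mathcal{M}^0)$, we have $\pi(t,dx)=u(t,x)\, dx$ with $0 \le u \le 1$ and $u \in L^2([0,T], \mathcal{H}^1)$. As a standard input derivable from Proposition \ref{p06} by testing $\hat{J}_H$ against a mollified $F'(u) = \log\frac{u}{1-u}$ exactly as in \cite{flm, fgln, blm}, I will use the quantitative energy estimate $\int_0^T\!\int_{-1}^1 |\nabla u|^2\, dx\, dt \le c_1(M)$.

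Fix $g \in C^1([-1,1])$, $0 \le s < t \le T$, and parameters $\lambda > 0$, $\eta > 0$. Let $h_\eta \in C^\infty([0,T])$ with $0 \le h_\eta \le 1$, $h_\eta \equiv 1$ on $[s,t]$, support in $[s-\eta, t+\eta]$ and $\int|h_\eta'|=2$. Apply $J_H(\pi) \le M$ to the test function $H(\tau, x) = \lambda h_\eta(\tau) g(x)$. The linear term equals $L_{[0,T]}(H) = -\lambda \int_0^T h_\eta'(\tau)\,\langle \pi_\tau, g\rangle\, d\tau$; Cauchy-Schwarz on the energy term yields $\bigl|\int\!\int D(u)\nabla u \cdot \nabla H\bigr| \le c_2\, \lambda\, \|\nabla g\|_\infty \sqrt{(t-s+2\eta)\, c_1(M)}$; the quadratic term $\int\!\int \sigma(u)(\nabla H)^2$ appears with a favourable sign and is dropped; and, using $|B(u_\tau, H_\tau)| \le c_3\, e^{\lambda \|g\|_\infty}$ uniformly in $u_\tau \in [0,1]$, one obtains $\bigl|\int_0^T B\, d\tau\bigr| \le c_3\, (t-s+2\eta)\, e^{\lambda \|g\|_\infty}$. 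Rearranging, letting $\eta \downarrow 0$, and exploiting the càdlàg regularity of $\tau \mapsto \langle \pi_\tau, g\rangle$ to evaluate $-\lambda \int h_\eta' \langle \pi_\tau, g\rangle\, d\tau \to \lambda\bigl[\langle \pi_t, g\rangle - \langle \pi_{s^-}, g\rangle\bigr]$, I arrive at
\begin{equation*}
\lambda\, \bigl[\langle \pi_t, g\rangle - \langle \pi_{s^-}, g\rangle\bigr] \,\le\, M \,+\, c_2\, \lambda\, \|\nabla g\|_\infty \sqrt{(t-s)\, c_1(M)} \,+\, c_3\, (t-s)\, e^{\lambda \|g\|_\infty}.
\end{equation*}

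Replacing $g$ by $-g$ furnishes the matching lower bound, and dividing by $\lambda$ gives
\begin{equation*}
\bigl|\langle \pi_t, g\rangle - \langle \pi_{s^-}, g\rangle\bigr| \,\le\, \frac{M}{\lambda} \,+\, c_2\, \|\nabla g\|_\infty \sqrt{(t-s)\, c_1(M)} \,+\, \frac{c_3}{\lambda}\, e^{\lambda \|g\|_\infty}\,(t-s).
\end{equation*}
Choosing $\lambda = (2\|g\|_\infty)^{-1}\log(1/(t-s))$ (or simply $\lambda = \log(1/(t-s))/2$ if $g$ vanishes at $\pm 1$) balances the first and third terms: it produces $e^{\lambda \|g\|_\infty}(t-s) = (t-s)^{1/2}$, so every term on the right tends to zero as $t-s \downarrow 0$ at a rate depending only on $M$ and $\|g\|_{C^1}$. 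This is the uniform modulus of continuity claimed by the lemma. Finally, taking $t \downarrow s$ forces $\langle \pi_s, g\rangle = \langle \pi_{s^-}, g\rangle$ for every $g\in C^1([-1,1])$, so $\pi$ has no jumps; combined with $\pi_\tau(\Omega) \le 2$ and the density of $C^1([-1,1])$ in $C([-1,1])$, one concludes $\pi \in C([0,T], \mathcal{M}^0)$.

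The main obstacle is the exponential dependence of the boundary term $B$ on $H(\tau,\pm 1)$: because $|B|$ grows like $e^{\lambda\|g\|_\infty}$ rather than polynomially in $\lambda$, one cannot send $\lambda \to \infty$ freely to annihilate the $M/\lambda$ term. The calibration $\lambda \sim \log(1/(t-s))$ is forced upon us, which is why the resulting modulus of continuity is only logarithmic. Once this choice is made, the remaining computations are straightforward.
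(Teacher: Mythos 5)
Your approach is essentially the one the paper defers to (Lemma 4.1 of \cite{flm}, Lemma 3.1 of \cite{fgln}): test the variational inequality $J_H(\pi)\le M$ against $H=\lambda h_\eta g$ with a bump function $h_\eta$, estimate each term using the uniform energy bound, then calibrate $\lambda$ against $|t-s|$ to tame the exponential growth of the boundary term $B$. The structure, the Cauchy--Schwarz step, the treatment of $L_{[0,T]}(H)$ via the c\`adl\`ag regularity, and the logarithmic balancing $\lambda\sim\log(1/|t-s|)$ are exactly right.

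There is, however, a sign error in the treatment of the quadratic term. In $\hat J_H$, the term $\int\!\!\int\sigma(u)(\nabla H)^2$ enters with a \emph{minus} sign, so rearranging $\hat J_H(\pi)\le M$ to isolate the linear term yields
\begin{equation*}
L_{[0,T]}(H) \;\le\; M \;-\; \int_0^T\!\!\int_{-1}^1 D(u)\nabla u\,\nabla H
\;+\; \int_0^T\!\!\int_{-1}^1 \sigma(u)(\nabla H)^2
\;+\; \int_0^T B(u_\tau,H_\tau)\,d\tau ,
\end{equation*}
in which $\int\!\!\int\sigma(u)(\nabla H)^2\ge 0$ appears with a \emph{plus} sign. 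It is therefore not ``favourable'' and cannot be dropped; it must be bounded. The estimate $\int\!\!\int\sigma(u)(\nabla H)^2 \le C\,\lambda^2\,\|\nabla g\|_\infty^2\,(t-s+2\eta)$ is immediate since $\sigma$ is bounded on $[0,1]$, and after division by $\lambda$ it contributes $C\,\lambda\,\|\nabla g\|_\infty^2\,(t-s)$, which still vanishes as $t-s\to 0$ with your choice $\lambda\sim\log(1/(t-s))$. So the final conclusion is unaffected, but the proof as written has a gap: you must keep this term and check that it vanishes. As a minor attribution point, the uniform bound $\int_0^T\!\!\int_{-1}^1|\nabla u|^2\,dx\,dt\le c_1(M)$ you invoke is the content of Lemma~\ref{l05}, not of Proposition~\ref{p06} (which only identifies $\mathcal Q$), though the method you describe for obtaining it is the correct one.
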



As $D$ is continuous and $u$ is bounded, the proofs of the next two
results are similar to the ones of Proposition 3.5 and Corollary 3.6
of \cite{fgln}.  

\begin{lem}
\label{l05}
There exists a constant $C_0>0$ such that:
\begin{align*}
\int_0^T \int_{-1}^1 \frac{|\nabla u(t,x)|^2}{\chi(u(t,x))}
\leq C_0(I_{[0,T]}(u)+1)    
\end{align*}
for all $u \in D([0,T], \mathcal{M}^0)$.
\end{lem}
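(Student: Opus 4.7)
The proof follows the strategy of Proposition 3.5 in \cite{fgln}, as the authors explicitly indicate. The idea is to test the variational characterization of $I_{[0,T]}(u)$ against a carefully chosen family of smooth functions and, after completing the square, to extract a lower bound on $I_{[0,T]}(u)$ in terms of the energy $\mc Q(\pi) = \int_0^T\!\!\int_{-1}^1 |\nabla u|^2/\chi(u)\,dx\,dt$ provided by Proposition \ref{p06}.

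First, I would restrict attention to test functions $H \in C^{1,2}([0,T] \times [-1,1])$ vanishing at the spatial boundary, i.e., $H(t,\pm 1) = 0$ for every $t \in [0,T]$. For such $H$ the boundary term $B(u_t,H_t)$ defined in \eqref{11} vanishes identically, so the inequality $I_{[0,T]}(u) \ge J_H(u)$ reduces to
\[
L_{[0,T]}(H) \;+\; \int_0^T\!\!\int_{-1}^1 D(u)\,\nabla u\,\nabla H\,dx\,dt \;-\; \int_0^T\!\!\int_{-1}^1 \sigma(u)\,(\nabla H)^2\,dx\,dt \;\le\; I_{[0,T]}(u).
\]
Replacing $H$ by $\lambda H$ with $\lambda \in \bb R$ and optimizing the resulting concave parabola in $\lambda$ yields, using the identity $\sigma = \chi D$, the Legendre-type bound
\[
\frac{\bigl(L_{[0,T]}(H) + \int_0^T\!\!\int_{-1}^1 D(u)\,\nabla u\,\nabla H\,dx\,dt\bigr)^2}{4 \int_0^T\!\!\int_{-1}^1 \chi(u)\,D(u)\,(\nabla H)^2\,dx\,dt} \;\le\; I_{[0,T]}(u).
\]

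Next I would specialize to $H(t,x) = \int_{-1}^x [\,G(t,y) - \tfrac{1}{2}\int_{-1}^1 G(t,z)\,dz\,]\,dy$ with $G \in C^{1,2}$; then $\nabla H$ differs from $G$ only by a spatially constant shift, $H(t,\pm 1) = 0$, and $\|H\|_{C^{1,1}}$ is controlled by $L^1_x$ norms of $G$ and $\partial_t G$. In particular $L_{[0,T]}(H)$ is bounded uniformly since $\pi_0,\pi_T$ have total mass at most $2$. I would then approximate the formally optimal choice $G^\ast(t,x) = \nabla u(t,x)/\chi(u(t,x))$ by a sequence of smooth mean-zero functions $G_n$, obtained by mollifying $\nabla u/\chi(u)$ restricted to the level set $\{1/n < u < 1 - 1/n\}$. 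Along this sequence both the numerator and the denominator of the Legendre bound converge to $\int_0^T\!\!\int_{-1}^1 D(u)\,|\nabla u|^2/\chi(u)\,dx\,dt$, and passing to the limit yields
\[
\tfrac{A}{4} \int_0^T\!\!\int_{-1}^1 \frac{|\nabla u|^2}{\chi(u)}\,dx\,dt \;\le\; I_{[0,T]}(u) \;+\; C,
\]
where $A = \min\{1, 1+2a\} > 0$ is the uniform lower bound on $D(u) = 1 + 2au$ for $u \in [0,1]$, and $C$ absorbs the uniformly bounded contribution from $L_{[0,T]}(H)$. Rearranging gives the claim with, e.g., $C_0 = (4/A)\max\{1, C\}$.

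The main technical obstacle is the singularity of $G^\ast$ on $\{u = 0\} \cup \{u = 1\}$ combined with the low a priori regularity of $u$. This is handled by the truncation-and-monotone-convergence scheme just sketched: one first derives, uniformly in $n$, a bound on the truncated energy $\int_0^T\!\!\int_{-1}^1 \mathbb{1}_{\{1/n < u < 1-1/n\}}\,|\nabla u|^2/\chi(u)\,dx\,dt$, and then applies the monotone convergence theorem as $n \to \infty$. A second, more bookkeeping-level, difficulty is keeping the corrective contribution from $L_{[0,T]}(H)$ under uniform control along the approximating sequence, which is arranged by the normalization $\int_{-1}^1 G_n(t,z)\,dz = 0$ enforced above. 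This is precisely the argument carried out in \cite[Proposition 3.5]{fgln}, to which the authors refer.
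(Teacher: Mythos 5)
The reduction to test functions with $H(t,\pm 1)=0$ (which kills $B(u_t,H_t)$), the completion of the square, and the truncation of the singular optimizer are all fine, but there is a genuine gap at the centre of the argument: the treatment of $L_{[0,T]}(H)$. By \eqref{07}, $L_{[0,T]}(H)=\<\pi_T,H_T\>-\<\pi_0,H_0\>-\int_0^T\<\pi_t,\partial_t H_t\>\,dt$, and the mass bound on $\pi_t$ controls only the two endpoint terms (and only after a sup-norm bound on $H_0,H_T$). The third term involves $\partial_t H_n$, i.e.\ $\partial_t G_n$, and along your approximating sequence it is not controlled by anything: $\nabla u/\chi(u)$ is merely square integrable in $(t,x)$, so the time derivative of its mollification is of the order of the inverse mollification parameter, and there is no cancellation when paired against $u_t$, which is only bounded and weakly continuous in time. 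The spatial mean-zero normalization of $G_n$ guarantees $H_n(t,\pm1)=0$ but is irrelevant to $\partial_t H_n$. Hence the numerator of your Legendre-type bound contains a term that may diverge with $n$, no uniform bound on the truncated energy is obtained, and the monotone-convergence step never starts. Nor can you rescue the step by representing $H\mapsto\int_0^T\<\pi_t,\partial_t H_t\>\,dt$ as a bounded functional on $\mc H^1_0(\sigma(u))$ (Proposition \ref{prop: norme -1}, Lemma \ref{l01}): in this paper that representation is itself proved using Lemma \ref{l05}, so the argument would be circular. Controlling precisely this time-derivative pairing is the whole difficulty of the lemma, not a bookkeeping issue.

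The energy estimates of this type in the literature (the route behind Proposition 3.5 of \cite{fgln}, and the analogous bounds in \cite{blm,flm}) are built to avoid this term rather than to bound it: the test function is not an approximation of the formal optimizer $\int^x\nabla u/2\chi(u)$, but a small multiple of a regularized function of the mollified density itself, of the type $H=K^{-1}F_\epsilon(u*\iota_\delta)$ with $F_\epsilon$ a truncated logit. With such a choice, $\int_0^T\<\pi_t,\partial_t H_t\>\,dt$ is, up to errors vanishing with the mollification, the increment between times $0$ and $T$ of a bounded functional of the mollified density, hence of order one; the boundary term needs no special structure since $-B(a,H)\le 4$ for every $H$; and the quadratic term, after Young's inequality, produces the weighted Dirichlet form $\int_0^T\int_{-1}^1|\nabla u|^2/\chi(u)$ with a positive constant, using $D(u)\ge\min\{1,1+2a\}>0$. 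Your outline would have to be replaced by, or supplemented with, an argument of this kind; as written, the key step fails.
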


\begin{corollary}
\label{cor: I hydro nul}
Let $\pi \in D([0,T], \mathcal{M}^0)$ such that
$\pi(t,dx)=u(t,x)dx$. Then, $u$ is the weak solution of the
hydrodynamic equation (\ref{10}) if, and only if,
$I_{[0,T]}(\pi|\rho_0)=0$.
\end{corollary}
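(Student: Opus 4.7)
My plan is to prove both directions by substituting the weak formulation into the functional $J_H(\pi)$.

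For the forward implication, suppose $u$ is the weak solution of \eqref{10} with initial condition $\rho_0$. Then $\pi_0(dx) = \rho_0(x)\, dx$, so $I_{[0,T]}(\pi|\rho_0) = I_{[0,T]}(\pi)$. Using Definition \ref{d01}(b), I substitute
\[
L_{[0,T]}(H) \;=\; -\int_0^T \<D(u_s)\nabla u_s, \nabla H_s\>\, ds \;+\; \int_0^T \big\{[\mf a - u_s(-1)]\, H_s(-1) + [\mf b - u_s(1)]\, H_s(1)\big\}\, ds
\]
into the definition of $J_H(\pi)$. The bulk gradient integral cancels with the second summand of $J_H$, leaving
\[
J_H(\pi) \;=\; -\int_0^T \int_{-1}^1 \sigma(u_t)(\nabla H_t)^2\, dx\, dt \;+\; \int_0^T \big\{F_{\mf a}(u_t(-1), H_t(-1)) + F_{\mf b}(u_t(1), H_t(1))\big\}\, dt,
\]
where $F_\varrho(a, M) := (\varrho - a)\, M - a\,(1-\varrho)\,(e^{-M}-1) - \varrho\,(1-a)\,(e^{M}-1)$. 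A direct calculation gives $\partial_M F_\varrho(a,0) = 0$, $\partial^2_M F_\varrho(a,M) = -a(1-\varrho)e^{-M} - \varrho(1-a)e^{M} < 0$, and $F_\varrho(a,0)=0$. Hence $F_\varrho(a,\cdot) \le 0$ everywhere, the bulk term is manifestly non-positive, and so $J_H(\pi) \le 0$ for every $H \in C^{1,1}([0,T]\times[-1,1])$. Since $J_0(\pi) = 0$, the supremum defining $I_{[0,T]}(\pi)$ equals zero.

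For the converse, assume $I_{[0,T]}(\pi|\rho_0) = 0$. Then $\pi_0(dx) = \rho_0(x)\, dx$ by the definition of $I_{[0,T]}(\cdot|\rho_0)$, and Lemma \ref{l05} yields $u \in L^2([0,T], \mc H^1)$, so by the one-dimensional Sobolev embedding $\mc H^1(\Omega) \hookrightarrow C(\overline\Omega)$ the traces $u(t, \pm 1)$ are defined for almost every $t$. Fix $H \in C^{1,2}([0,T]\times[-1,1])$ and $\epsilon \in \bb R$, replace $H$ by $\epsilon H$, and expand in $\epsilon$ using $e^{\pm \epsilon M} - 1 = \pm \epsilon M + O(\epsilon^2)$. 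The boundary contribution $\int_0^T B(u_t, \epsilon H_t)\, dt$ becomes $\epsilon \int_0^T \{[\mf b - u_s(1)] H_s(1) + [\mf a - u_s(-1)] H_s(-1)\}\, ds + O(\epsilon^2)$, so
\[
J_{\epsilon H}(\pi) \;=\; \epsilon\, \Psi(H) \;+\; O(\epsilon^2),
\]
where $\Psi(H)$ equals $L_{[0,T]}(H) + \int_0^T \<D(u_s)\nabla u_s, \nabla H_s\>\, ds - \int_0^T \{[\mf a - u_s(-1)] H_s(-1) + [\mf b - u_s(1)] H_s(1)\}\, ds$. The inequality $J_{\epsilon H}(\pi) \le I_{[0,T]}(\pi) = 0$ for every real $\epsilon$ (letting $\epsilon \to 0^\pm$) forces $\Psi(H) = 0$ for every such $H$. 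Unfolding $L_{[0,T]}(H) = \<u_T, H_T\> - \<u_0, H_0\> - \int_0^T \<u_s, \partial_s H_s\>\, ds$ recovers exactly the weak formulation of Definition \ref{d01}(b); combined with $u(0,\cdot) = \rho_0(\cdot)$, this shows $u$ is the weak solution.

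The main technical points are the concavity computation $F_\varrho(a, \cdot) \le 0$ (which encodes the Legendre duality between the boundary jump rates and their exponential tilts in $B(u,H)$) and the uniform control of the $O(\epsilon^2)$ remainder. The latter follows from the boundedness $0 \le u \le 1$, the smoothness of $H$, and the $L^2([0,T], \mc H^1)$ bound on $u$ from Lemma \ref{l05}, which makes the bulk integrands $\sigma(u)(\nabla H)^2$ and $D(u)\nabla u \cdot \nabla H$ integrable.
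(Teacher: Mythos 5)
Your overall strategy is the same as the one the paper intends (it refers for this statement to Proposition 3.5 and Corollary 3.6 of \cite{fgln}): for the converse, linearize $J_{\epsilon H}(\pi)$ in $\epsilon$ and use $I_{[0,T]}(\pi|\rho_0)=0$ to force the first-order term $\Psi(H)$ to vanish, which is exactly Definition \ref{d01}(b); for the direct implication, substitute the weak formulation into $\hat J_H$ and use concavity of the boundary term. Your computation $F_\varrho(a,0)=0$, $\partial_M F_\varrho(a,0)=0$, $\partial^2_M F_\varrho\le 0$ is correct, and the converse direction is complete: $I<\infty$ forces $\pi\in D_{\mc E}([0,T],\mc M^0)$, Lemma \ref{l05} (or Proposition \ref{p06}) gives condition (a), and the two-sided limit $\epsilon\to 0^{\pm}$ gives condition (b), with the initial condition coming from the definition of $I_{[0,T]}(\cdot|\rho_0)$.

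There is, however, one genuine omission in the forward direction. By definition, $J_H(\pi)=+\infty$ unless $\pi$ belongs to $D_{\mc E}([0,T],\mc M^0)$, i.e.\ unless $\mc Q(\pi)<\infty$, which by Proposition \ref{p06} amounts to $\int_0^T\int_{-1}^1 |\nabla u|^2/\chi(u)\,dx\,dt<\infty$. Condition (a) of Definition \ref{d01} only gives $\int_0^T\int_{-1}^1 |\nabla u|^2\,dx\,dt<\infty$, and since $\chi(u)=u(1-u)$ may vanish where $u$ touches $0$ or $1$, the former does not follow from the latter. So before any cancellation you must know that the weak solution has finite energy in the sense of $\mc Q$; otherwise $I_{[0,T]}(\pi|\rho_0)=+\infty$ by definition and the forward implication fails as written. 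This is precisely Corollary \ref{cor: hydro ener fini}, whose proof is itself not a triviality (it rests on Lemma B.5 of \cite{fgln}), and it must be invoked or reproved at the start of your forward argument. A second, minor point: Definition \ref{d01}(b) is stated for $H\in C^{1,2}$, while the supremum defining $I_{[0,T]}$ runs over $H\in C^{1,1}$; your bound $J_H(\pi)\le 0$ therefore requires a routine mollification step to pass from $C^{1,2}$ to $C^{1,1}$ test functions, using that every term of $\hat J_H$ is continuous under uniform convergence of $H$, $\partial_t H$ and $\nabla H$.
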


Let $(E_q)_{q \geq 0}$ be the level set of the rate function $I_{[0,T]}(.)$:
\begin{align*}
E_q = \{\pi \in D([0,T], \mathcal{M}) | I_{[0,T]}(\pi) \leq q\}
\end{align*}

\begin{thm}
\label{thm: semi cont de I}
Fix $T>0$. Then,
$I_{[0,T]}\colon D([0,T], \mathcal{M}^0) \to [0, \infty]$ is lower
semicontinuous and has compact level sets.
\end{thm}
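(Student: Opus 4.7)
The plan is to derive both assertions from the two key inputs already established in this section: the energy estimate of Lemma~\ref{l05} and the time equi-continuity of Lemma~\ref{l04}. Since $I_{[0,T]}$ is by definition the pointwise supremum over $H \in C^{1,2}([0,T]\times[-1,1])$ of the functionals $J_H$, it suffices to show that for each fixed $H$ the map $\pi \mapsto J_H(\pi)$ is lower semicontinuous on $D([0,T],\mc M^0)$; the supremum of a family of lower semicontinuous functions is automatically lower semicontinuous.

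Suppose $\pi^n \to \pi$ in $D([0,T],\mc M^0)$, write $\pi^n(t,dx) = u^n(t,x)\,dx$ and $\pi(t,dx) = u(t,x)\,dx$, and assume $\liminf_n I_{[0,T]}(\pi^n) = C < \infty$; passing to a subsequence, $I_{[0,T]}(\pi^n) \le C+1$. Lemma~\ref{l05} then gives a uniform bound on $\int_0^T\!\int_{-1}^1 |\nabla u^n|^2/\chi(u^n)\,dx\,dt$, and since $\chi \le 1/4$ the family $\{u^n\}$ is bounded in $L^2([0,T],\mc H^1(\Omega))$. Combined with $0 \le u^n \le 1$ and the weak time equi-continuity from Lemma~\ref{l04}, an Aubin--Lions type compactness argument produces a further subsequence along which $u^n \to u$ strongly in $L^2([0,T]\times\Omega)$ (hence a.e.), $\nabla u^n \rightharpoonup \nabla u$ weakly in $L^2$, and the boundary traces $u^n(\cdot,\pm 1) \to u(\cdot,\pm 1)$ in $L^2([0,T])$. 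Now pass to the limit term by term in $\hat J_H(\pi^n)$: the linear piece $L_{[0,T]}(H)$ is continuous under Skorohod convergence by the same argument that proves Lemma~\ref{l04}; the diffusive term $\int\!\int D(u^n)\nabla u^n\, \nabla H\,dx\,dt$ converges because $D(u^n)\nabla H \to D(u)\nabla H$ strongly in $L^2$ (dominated convergence from the a.e. limit and boundedness) while $\nabla u^n \rightharpoonup \nabla u$; the fluctuation term $\int\!\int \sigma(u^n)(\nabla H)^2\,dx\,dt$ converges by dominated convergence; and the boundary term $\int_0^T B(u^n_t,H_t)\,dt$ converges through the trace convergence. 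Hence $J_H(\pi) = \lim_n J_H(\pi^n) \le \liminf_n I_{[0,T]}(\pi^n)$, and taking the supremum over $H$ yields $I_{[0,T]}(\pi) \le \liminf_n I_{[0,T]}(\pi^n)$.

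For the level sets $E_q = \{\pi : I_{[0,T]}(\pi) \le q\}$, closedness is immediate from the lower semicontinuity just established. For relative compactness, Lemma~\ref{l04} states that for every $g \in C^1([-1,1])$ the family $\{t \mapsto \langle \pi_t,g\rangle : \pi \in E_q\}$ is uniformly equi-continuous. Since the weak topology on the compact space $\mc M$ is metrized by a countable dense family of such test functions, this yields equi-continuity of the $\mc M$-valued paths $t \mapsto \pi_t$ uniformly over $E_q$. Arzel\`a--Ascoli then gives relative compactness of $E_q$ in $C([0,T],\mc M) \subset D([0,T],\mc M)$, which combined with closedness gives compactness. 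The main obstacle throughout is passing to the limit in the quasilinear term $\int\!\int D(u^n)\nabla u^n\, \nabla H\,dx\,dt$: the nonlinearity $D(u^n)$ requires strong convergence of $u^n$, while $\nabla u^n$ is controlled only weakly by the energy bound. The resolution is to couple the spatial $\mc H^1$ control from Lemma~\ref{l05} with the time equi-continuity from Lemma~\ref{l04} via Aubin--Lions compactness, thereby upgrading the weak convergence $\pi^n \to \pi$ in $D([0,T],\mc M^0)$ to strong $L^2$ convergence of the densities.
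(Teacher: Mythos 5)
Your compactness argument is the paper's verbatim: closedness from lower semicontinuity, then relative compactness of $E_q$ from the uniform equi-continuity of $t\mapsto\langle\pi_t,g\rangle$ (Lemma~\ref{l04}), the metrizability of the compact space $\mathcal M$ by a countable test family, and Arzel\`a--Ascoli. That part matches exactly.

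For lower semicontinuity the paper simply defers to Theorem~2.4 of \cite{fgln}, whereas you spell out the usual direct argument: pass to a subsequence with bounded rate function, extract the energy bound from Lemma~\ref{l05}, upgrade Skorohod convergence to strong $L^2$ convergence of the densities plus weak $L^2$ convergence of the gradients and convergence of the traces, and pass to the limit in $\hat J_H$ term by term before taking the supremum. This is the standard route and it is the one the reference follows. Two small remarks on the write-up, neither fatal. First, your opening reduction --- ``it suffices to show that for each fixed $H$ the map $\pi\mapsto J_H(\pi)$ is lower semicontinuous'' --- is not what the second paragraph actually establishes and is not obviously true as stated: the control on $\|\nabla u^n\|_{L^2}$ that lets you pass to the limit in the quasilinear term comes from Lemma~\ref{l05}, i.e.\ from bounding $I_{[0,T]}(\pi^n)=\sup_H J_H(\pi^n)$, not from any single $J_H$. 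The correct and stronger statement you actually prove is that $\hat J_H$ is continuous along sequences of uniformly bounded rate function; combined with $J_H\le I_{[0,T]}$ this yields lower semicontinuity of the supremum. Second, you should extract a subsequence along which $I_{[0,T]}(\pi^{n_k})\to C:=\liminf_n I_{[0,T]}(\pi^n)$ rather than merely $\le C+1$; otherwise the final chain of inequalities only gives $J_H(\pi)\le C+1$. Finally, the Aubin--Lions step deserves a word on why the equi-continuity of Lemma~\ref{l04} (in a space dual to $C^1$) furnishes the required time compactness in $L^2(\Omega)$; this is a standard interpolation/Simon-type argument, but it is the one nontrivial analytic ingredient and is left implicit in your sketch.
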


\begin{proof}
The proof of the lower semicontinuity is similar to the one of Theorem
2.4 of \cite{fgln}.  We turn to the compactness of $E_q$ in
$D([0,T],\mathcal{M})$. Fix $q \geq 0$.  By lower semicontinuity,
$E_q$ is a closed set. It remains to prove the relative compactness.
By Lemma \ref{l04}, $E_q \subset C([0,T],\mathcal{M})$. This follows
from Ascoli theorem.

Since for each $\pi(t,dx)=u(t,x)dx \in E_q$, $0 \leq u \leq 1$, for
every sequence $(\pi^n)_{n \in \mathbb{N}}$ in $E_q$, $t \in [0,T] $,
there exists a subsequence $(\varphi(n))_{n \in \mathbb{N}}$ such that
$(\pi_t^{\varphi(n)})_{n \in \mathbb{N}}$ converges. On the other
hand, by definition of the distance $d(.,.)$ in $\mathcal{M}$, by
(1.1) in Section 4.1 of \cite{kl}, and by Lemma \ref{l04},
\begin{align*}
\underset{\delta \to 0}{\lim}
\sup_{\pi \in E_q }
\underset{|r-s|\leq \delta}{\mbox{ sup }} d(\pi_s,\pi_r)=0\;.
\end{align*}
This prove that  $E_q$ is relatively compact.
\end{proof}

\begin{remark}
Fix a profile $\gamma\colon [-1,1] \to [0,1]$. The same proof yields
that the rate function $I_{[0,T]}(.|\gamma)$ is lower semicontinuous
and has compact level sets. Details are given in Theorem 3.2.24 in \cite{these}.
\end{remark}

Next two results show that the cost of a trajectory in an interval
$[0,S] \cup [S,T]$ is equal to the sum of its cost on each interval.
Denote by $\color{blue} C_K^{1,2}(]0,T[\times [-1,1])$ the space of
functions in $C^{1,2}([0,T]\times [-1,1])$ with compact support in
$]0,T[\times [-1,1]$.

\begin{lem}
\label{lem: I bord nul}
Fix $T>0$. Let $\pi(t,dx)=u(t,x)dx$ such that $I_{[0,T]}(\pi)<+\infty$. Then,
\begin{align*}
I_{[0,T]}(\pi)=
\underset{H\in C_K^{1,2}(]0,T[\times [-1,1])}{\sup }\hat{J}_H(\pi)
\end{align*}
\end{lem}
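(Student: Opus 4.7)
My plan is to establish the two inequalities separately. The bound $\sup_{H \in C_K^{1,2}(]0,T[\times[-1,1])} \hat{J}_H(\pi) \le I_{[0,T]}(\pi)$ is immediate: after extension by zero, any $H \in C_K^{1,2}(]0,T[\times[-1,1])$ lies in $C^{1,1}([0,T]\times[-1,1])$. For the reverse inequality, given $H \in C^{1,1}([0,T]\times[-1,1])$ the task is to produce $H_\delta \in C_K^{1,2}(]0,T[\times[-1,1])$ with $\hat{J}_{H_\delta}(\pi) \to \hat{J}_H(\pi)$. A preliminary spatial mollification (after reflection at $x = \pm 1$), whose effect on $\hat J_H$ is easily controlled by dominated convergence, lets us assume $H \in C^{1,2}$. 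Since $]0,T[\times[-1,1]$ is closed in $x$, only a temporal cutoff is then needed: fix $\phi_\delta \in C^1([0,T])$ with $0 \le \phi_\delta \le 1$, $\phi_\delta \equiv 1$ on $[\delta,T-\delta]$, $\phi_\delta(0) = \phi_\delta(T) = 0$, and monotone on each transition interval; set $H_\delta(t,x) := \phi_\delta(t)\,H(t,x)$, which belongs to $C_K^{1,2}(]0,T[\times[-1,1])$.

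The key step is to show that every term of $\hat{J}_{H_\delta}(\pi)$ converges to the corresponding term of $\hat{J}_H(\pi)$ as $\delta \to 0$. Lemma \ref{l05} combined with $u$ bounded yields $\nabla u \in L^2$, so the bulk integrals $\int\!\!\int D(u)\nabla u\,\nabla H_\delta$ and $\int\!\!\int \sigma(u)(\nabla H_\delta)^2$ converge by dominated convergence (using $\nabla H_\delta = \phi_\delta \nabla H$, $\phi_\delta \to 1$ on $(0,T)$, and boundedness of $D$, $\sigma$, $\nabla H$). The boundary term $\int_0^T B(u_t, H_{\delta,t})\,dt$ converges likewise by continuity of $B$ in its second argument and the uniform bound $|H_{\delta,t}(\pm 1)| \le \|H\|_\infty$. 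Because $H_\delta$ vanishes at $t = 0$ and $t = T$,
\begin{equation*}
L_{[0,T]}(H_\delta) \;=\; -\int_0^T \phi_\delta'(t)\,\<\pi_t, H_t\>\, dt \;-\; \int_0^T \phi_\delta(t)\,\<\pi_t, \partial_t H_t\>\, dt,
\end{equation*}
and the second summand converges to $-\int_0^T \<\pi_t,\partial_t H_t\>\, dt$ by dominated convergence.

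The main obstacle is the first summand, where $\phi_\delta'$ concentrates as approximate Dirac masses of total mass $+1$ at $t=0$ and $-1$ at $t=T$. My strategy is to show that $t \mapsto \<\pi_t, H_t\>$ is continuous on $[0,T]$ via the decomposition
\begin{equation*}
|\<\pi_t, H_t\> - \<\pi_s, H_s\>| \;\le\; 2\,\|H_t - H_s\|_\infty \;+\; |\<\pi_t - \pi_s, H_s\>|,
\end{equation*}
bounding the first piece by the uniform continuity of $H$ on the compact $[0,T]\times[-1,1]$ (since $\mathrm{mass}(\pi_t)\le 2$) and the second piece by Lemma \ref{l04} applied to $g = H_s \in C^1([-1,1])$. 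Given this continuity, the monotonicity of $\phi_\delta$ on $[0,\delta]$ and $[T-\delta,T]$ gives $\int_0^\delta |\phi_\delta'|\,dt = \int_{T-\delta}^T |\phi_\delta'|\,dt = 1$, so the integral concentrates at the endpoint values:
\begin{equation*}
\int_0^T \phi_\delta'(t)\,\<\pi_t, H_t\>\, dt \;\longrightarrow\; \<\pi_0, H_0\> \,-\, \<\pi_T, H_T\> \quad \text{as } \delta \to 0.
\end{equation*}
Assembling the three limits yields $L_{[0,T]}(H_\delta) \to L_{[0,T]}(H)$, hence $\hat{J}_{H_\delta}(\pi) \to \hat{J}_H(\pi)$, which completes the approximation.
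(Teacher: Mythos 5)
Your proof is correct and follows essentially the same route as the paper: multiply $H$ by a smooth temporal cutoff vanishing near $t=0,T$ and pass to the limit using dominated convergence together with Lemma \ref{l04}. You merely spell out details the paper defers to the thesis, namely the concentration of the $\phi_\delta'$ term via continuity of $t\mapsto \<\pi_t,H_t\>$ and the preliminary spatial mollification reducing $C^{1,1}$ test functions to $C^{1,2}$.
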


\begin{proof} 
Clearly, the right-hand side is bounded by the left-hand side.  We
turn to the reversed inequality.  The proof is presented in
details in Lemma 3.6.1 in \cite{these}.

Fix $\epsilon>0$. Let $\psi_{\epsilon}$ be a function in $C^1([0,T])$
such that $\psi_{\epsilon}(t)=1$ for every
$t \in [\epsilon, T-\epsilon]$ and $\psi_{\epsilon}(t)=0$ for all
$t \in [0,\frac{\epsilon}{2}] \cup  [T-\frac{\epsilon}{2},T]$.
Fix $G \in C^{1,2}([0,T]\times [-1,1])$.  Consider
$G^{\epsilon}=G\psi_{\epsilon}$. Then,
$G^{\epsilon} \in C_K^{1,2}(]0,T[\times [-1,1])$ and by the dominated
convergence theorem and Lemma \ref{l04}:
\begin{align*}
\hat{J}_G(\pi)=\underset{\epsilon \to 0}{\lim}
\hat{J}_{G^{\epsilon}}(\pi) \leq \underset{C_K^{1,2}
(]0,T[\times [-1,1])}{\mbox{ sup }}\hat{J}_H(\pi)\;,
\end{align*}
which completes the proof of the lemma.
\end{proof}

\begin{corollary}
\label{cor: separtion I}
Fix $T>0$. Let $\pi(t,dx)=u(t,x)dx$ such that $I_{[0,T]}(\pi)<+\infty$. 
Then, for each $0 < s < T$,
\begin{align*}
I_{[0,T]}(\pi)=I_{[0,s]}(\pi)+I_{[s,T]}(\pi)   \;.
\end{align*}
\end{corollary}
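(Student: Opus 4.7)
The plan is to show both inequalities separately, using Lemma~\ref{lem: I bord nul} for the direction $I_{[0,T]}(\pi)\ge I_{[0,s]}(\pi)+I_{[s,T]}(\pi)$ and a straightforward splitting of $\hat J_H$ at time $s$ for the reverse inequality. Lemma~\ref{l04} (continuity of $t\mapsto \langle \pi_t,g\rangle$) is what makes the splitting go through cleanly at the junction time $s$.

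For the inequality $I_{[0,T]}(\pi)\le I_{[0,s]}(\pi)+I_{[s,T]}(\pi)$, I would fix an arbitrary $H\in C^{1,2}([0,T]\times[-1,1])$ and observe that every term in $\hat J_H(\pi)$ is either an integral over $[0,T]$ whose integrand depends on $t$ only locally (the spatial integrals involving $D(u)\nabla u\,\nabla H$, $\sigma(u)(\nabla H)^2$, and the boundary functional $B(u_t,H_t)$) or the linear functional $L_{[0,T]}(H)$. The first family of terms split at $s$ by the additivity of the Lebesgue integral. For $L_{[0,T]}(H)$, inserting $\pm\langle \pi_s,H_s\rangle$ and splitting $\int_0^T$ as $\int_0^s+\int_s^T$ gives $L_{[0,T]}(H)=L_{[0,s]}(H)+L_{[s,T]}(H)$; here $\langle \pi_s,H_s\rangle$ is unambiguous thanks to Lemma~\ref{l04}. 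Therefore $\hat J_H(\pi)=\hat J_{H|_{[0,s]}}(\pi)+\hat J_{H|_{[s,T]}}(\pi)\le I_{[0,s]}(\pi)+I_{[s,T]}(\pi)$, and taking the supremum over $H$ yields the desired bound.

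For the converse, I invoke Lemma~\ref{lem: I bord nul} applied to the subintervals $[0,s]$ and $[s,T]$, which lets me compute $I_{[0,s]}(\pi)$ as a supremum over $H_1\in C_K^{1,2}(\,]0,s[\,\times[-1,1])$ and $I_{[s,T]}(\pi)$ as a supremum over $H_2\in C_K^{1,2}(\,]s,T[\,\times[-1,1])$. Given such $H_1,H_2$, they both vanish in an open neighbourhood of $s$, so their concatenation $H(t,x)=H_1(t,x)\mathbf{1}_{t<s}+H_2(t,x)\mathbf{1}_{t\ge s}$ belongs to $C_K^{1,2}(\,]0,T[\,\times[-1,1])$. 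The splitting identity from the first direction then gives $\hat J_H(\pi)=\hat J_{H_1}(\pi)+\hat J_{H_2}(\pi)$, so
\[
\hat J_{H_1}(\pi)+\hat J_{H_2}(\pi)\;=\;\hat J_H(\pi)\;\le\;I_{[0,T]}(\pi).
\]
Taking the supremum independently over $H_1$ and $H_2$ yields $I_{[0,s]}(\pi)+I_{[s,T]}(\pi)\le I_{[0,T]}(\pi)$.

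The only potentially delicate point is the well-posedness of the boundary term $\langle \pi_s,H_s\rangle$ and of the pointwise-in-time evaluations used in the splitting of $L_{[0,T]}(H)$: these require the weak continuity of $\pi$ at $s$, which is exactly what Lemma~\ref{l04} provides once $I_{[0,T]}(\pi)<\infty$. Everything else reduces to the additivity of the Lebesgue integral on $[0,T]=[0,s]\cup[s,T]$.
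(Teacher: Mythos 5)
Your argument is correct and is essentially the proof the paper intends: the corollary is presented as a direct consequence of Lemma \ref{lem: I bord nul}, obtained exactly by splitting $\hat{J}_H$ at time $s$ in one direction and concatenating compactly supported test functions in the other. The only point worth making explicit is that Lemma \ref{lem: I bord nul} is being applied on the subintervals $[0,s]$ and $[s,T]$, which is legitimate because its proof only uses the continuity of $t\mapsto\langle\pi_t,g\rangle$ from Lemma \ref{l04}, available under the corollary's hypothesis $I_{[0,T]}(\pi)<\infty$ (so no a priori finiteness of $I_{[0,s]}(\pi)$, $I_{[s,T]}(\pi)$ is needed).
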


In the remaining part of this section, we examine the regularity of
the map $t\mapsto \<\pi_t,g\>$ for a trajectory $\pi$ with finite rate
functional and a smooth function $g$.

\begin{lem}
\label{lem : var borne}
Fix $\pi(t,dx)=u(t,x)dx$ such that $I_{[0,T]}(\pi)<+\infty$.  Then,
for each $g \in C^2([-1,1])$, $t \mapsto \<\pi_t,g\>$ is a bounded
variation function.
\end{lem}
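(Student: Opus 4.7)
\medskip

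\noindent\emph{Plan of proof.} Set $\phi(t) := \<\pi_t, g\>$. By Lemma \ref{l04}, $\phi$ is continuous on $[0,T]$, so the claim that $\phi$ has bounded variation is equivalent, by the standard $L^\infty$--duality characterisation of $BV$, to
\begin{equation*}
\sup\Big\{ \int_0^T h'(t)\, \phi(t)\, dt \,:\, h \in C^1_c(\,]0,T[\,),\ \|h\|_\infty \le 1 \Big\} \;<\; +\infty.
\end{equation*}
I plan to extract this bound from the variational inequality $J_H(\pi) \le M := I_{[0,T]}(\pi) < +\infty$ by testing against tensor-product functions $H(t,x) = h(t)\, g(x)$, with $h \in C^1_c((0,T))$ and $g \in C^2([-1,1])$ as in the statement.

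Such $H$ lie in $C_K^{1,2}(\,]0,T[\times [-1,1])$, so Lemma \ref{lem: I bord nul} applies, and, crucially, the time-boundary contribution in $L_{[0,T]}(H)$ disappears, leaving $L_{[0,T]}(H) = -\int_0^T h'(t)\, \phi(t)\, dt$. Writing out $\hat J_H(\pi) \le M$ and applying the same inequality with $-H$ in place of $H$ then yields
\begin{equation*}
\Big|\int_0^T h'(t)\phi(t)\,dt\Big| \;\le\; M \;+\; |A_1(h)| \;+\; A_2(h) \;+\; \max_{\pm}|A_3(\pm h)|,
\end{equation*}
where $A_1(h) = \int_0^T h(t) \<D(u_t)\nabla u_t,\nabla g\>\, dt$, $A_2(h) = \int_0^T h(t)^2 \<\sigma(u_t), (\nabla g)^2\>\, dt$ and $A_3(h) = \int_0^T B(u_t, h(t) g)\, dt$ collect, respectively, the drift, dissipation and boundary terms arising in $\hat J_H(\pi)$.

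The next step is to bound each $A_i$ by a constant depending only on $g$, $\mf a$, $\mf b$, $a$ and $M$, uniformly over $\|h\|_\infty \le 1$. For $A_1$, I would combine Cauchy--Schwarz with Lemma \ref{l05} and the bound $\chi \le 1/4$ to obtain $\int_0^T\!\int_{-1}^1 |\nabla u|\,dx\,dt \le C\sqrt{M+1}$; together with the boundedness of $D$ on $[0,1]$, this gives $|A_1(h)| \le C'\,\|\nabla g\|_\infty$. The term $A_2$ is trivially $O(1)$ since $\sigma$ is bounded on $[0,1]$. For $A_3$, the estimate $|h(t) g(\pm 1)| \le \|g\|_\infty$ keeps the exponentials $e^{\pm h(t) g(\pm 1)}$ appearing in \eqref{11} uniformly bounded, whence $|A_3(\pm h)| \le C(\mf a, \mf b, g)$. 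A final rescaling $h \mapsto h/\|h\|_\infty$ converts the uniform estimate for unit test functions into the linear bound $|\int_0^T h'(t)\phi(t)\,dt| \le C\|h\|_\infty$ for arbitrary $h \in C^1_c((0,T))$, which is exactly the required $BV$ criterion.

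I do not expect a serious obstacle: the argument is the standard technique of reading off one-variable regularity of $t \mapsto \<\pi_t,g\>$ from a rate functional that pairs $\pi_t$ with $\partial_t H_t$. The only mild subtlety is the exponential nonlinearity of the boundary functional $B$, which could have spoiled the linear dependence on $\|h\|_\infty$; it is tamed by first establishing the bound for $\|h\|_\infty \le 1$ and rescaling at the end.
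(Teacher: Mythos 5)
Your proposal is correct, but it takes a genuinely different route from the paper. The paper tests the variational formula with the \emph{time-independent} functions $\pm g$ on each subinterval of a partition $0=t_0<\cdots<t_n=T$, so that $L_{[t_k,t_{k+1}]}(\pm g)=\pm\,(\<\pi_{t_{k+1}},g\>-\<\pi_{t_k},g\>)$, bounds the drift, quadratic and boundary terms much as you do, and then sums over the partition using the additivity $I_{[0,T]}(\pi)=\sum_k I_{[t_k,t_{k+1}]}(\pi)$ of Corollary \ref{cor: separtion I} and Young's inequality; this bounds the pointwise variation directly by $C_0T\int\{(\nabla g)^2+e^{|g|}+1\}\,dx+\mc Q(\pi)+I_{[0,T]}(\pi)$, with no measure-theoretic input. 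You instead test with $H(t,x)=h(t)g(x)$, $h\in C^1_c(]0,T[)$, and combine the $L^\infty$-duality characterization of $BV$ with the continuity of $t\mapsto\<\pi_t,g\>$ from Lemma \ref{l04} to upgrade the distributional bound to pointwise bounded variation; your uniform estimates on the three error terms (Cauchy--Schwarz with Lemma \ref{l05} and $\chi\le 1/4$ for the drift, boundedness of $\sigma$ for the quadratic term, $|h(t)g(\pm1)|\le\|g\|_\infty$ for the exponential boundary term, and homogeneity of the linear functional $h\mapsto\int_0^Th'\phi$ for the rescaling) are all valid, and the step through Lemma \ref{lem: I bord nul} is in fact superfluous, since the defining supremum for $I_{[0,T]}$ already runs over all of $C^{1,1}([0,T]\times[-1,1])$ and finiteness of $I_{[0,T]}(\pi)$ already forces $\pi\in D_{\mc E}([0,T],\mc M^0)$. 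What each approach buys: yours avoids the partition bookkeeping and does not use Corollary \ref{cor: separtion I}, at the price of invoking the $BV$ duality and Lemma \ref{l04}; the paper's argument is more elementary and, more importantly, produces as a by-product the quantitative increment bound $|\<\pi_s,g\>-\<\pi_r,g\>|\le C_0\int_r^s\int_{-1}^1|\nabla u|\,|\nabla g|+C_0(s-r)\int_{-1}^1\{(\nabla g)^2+e^{|g|}+1\}+I_{[r,s]}(\pi)$ on arbitrary subintervals, which is reused verbatim in both claims of the proof of Proposition \ref{th: IPP}; with your version of the lemma that later argument would need to be adapted (for instance by localizing the test functions $h$ in time).
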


\begin{proof}
Fix $\pi(t,dx)=u(t,x)dx$ such that $I_{[0,T]}(\pi)<+\infty$,
$g \in C^2([-1,1])$, $n \in \mathbb{N}^*$, and a partition
$0=t_0 < \cdots < t_n = T$ of the interval $[0,T]$. Consider
$0 \leq r \leq s \leq T$. By definition of the rate functional
$I_{[r,s]}(\cdot)$, and since $\sigma$, $D$, $b$ and $u$ are bounded,
\begin{align*}
\<\pi_s,g\>-\<\pi_r,g\> &\leq
C_0 \int_r^s\int_{-1}^1 |\nabla u(t,x)|\, |\nabla g(x)|\, dx\, dt \\
& +C_0 (s-r)  \int_{-1}^1  \Big\{ (\nabla g(x))^2  + e^{|g|} +1 \Big\}
\, dx \,+\, I_{[r,s]}(\pi)
\end{align*}
for some finite constant $C_0$ which does not depend on $g$ or $\pi$.
Replacing $g$ by $-g$ shows that the absolute value of
$\<\pi_s,g\>-\<\pi_r,g\> $ is bounded by the right-hand side.  Hence,
by Corollary \ref{cor: separtion I}, the linearity of the integral and
Young's inequality $2xy \le x^2 + y^2$,
\begin{align*}
&\sum_{k=0}^{n-1}|\<\pi_{t_{k+1}},g\>-\<\pi_{t_k},g\>|
\leq C_0 \, T\,  \int_{-1}^1  \Big\{ (\nabla g(x))^2  + e^{|g|} +1 \Big\}
\, dx \,+\, \mc Q(\pi) \,+\, I_{[0,T]}(\pi) 
\end{align*}
for some finite constant $C_0$ which does not depend on $g$ or $\pi$.
This completes  the proof of the lemma.
\end{proof}

\begin{prop}
\label{th: IPP}
Fix $\pi(t,dx)=u(t,x)dx$ such that $I_{[0,T]}(\pi)<+\infty$. Then, for
each $g \in C^1([-1,1])$, there exists $P \in L^1([0,T])$ such that,
\begin{align*}
H_T\<\pi_T,g\>-H_0\<\pi_0,g\>-\int_0^TH'_t\, \<\pi_t,g\>\, dt
=\int_0^TH_t \, P_t\, dt
\end{align*}
for all $H \in C^1([0,T])$.
\end{prop}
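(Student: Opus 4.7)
Setting $f(t) := \langle \pi_t, g\rangle$, observe that for a product test function $H(t,x) = h(t)g(x)$ with $h \in C^1([0,T])$ we have $L_{[0,T]}(H) = h(T)f(T) - h(0)f(0) - \int_0^T h'(t) f(t)\,dt$. The stated identity is therefore equivalent to the absolute continuity of $f$; once this is established, the fundamental theorem of calculus for absolutely continuous functions provides $P \in L^1([0,T])$ with $f(t) = f(0) + \int_0^t P(s)\,ds$, and a routine integration by parts concludes the proof for all $H \in C^1([0,T])$.

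The heart of the proof is the following \emph{main estimate}: there exists a modulus $\omega$, depending only on $g$, $u$, and $I_{[0,T]}(\pi)$, with $\omega(\delta) \to 0$ as $\delta \to 0^+$, such that for every $\varphi \in C^1([0,T])$ with $\|\varphi\|_\infty \le 1$ and $\mathrm{supp}(\varphi) \subset E \subset [0,T]$ with $|E| \le \delta$,
\[
\big| L_{[0,T]}(\varphi g) \big| \;\le\; \omega(\delta).
\]
To prove it, apply the variational inequality $\hat J_H(\pi) \le I_{[0,T]}(\pi)$ with $H = \pm K \varphi g$ for a parameter $K > 0$ to be chosen. Unpacking $\hat J_H$, using $\nabla H = \pm K \varphi \nabla g$, and combining the two resulting inequalities yields
\[
\big| L_{[0,T]}(\varphi g) + A_\varphi \big| \;\le\; \frac{I_{[0,T]}(\pi)}{K} + K\, B_\varphi + \frac{1}{K}\max_\pm \int_0^T \big| B(u_t,\pm K \varphi(t) g)\big|\, dt,
\]
where $A_\varphi = \int_0^T \varphi\, \alpha\, dt$, $B_\varphi = \int_0^T \varphi^2 \beta\, dt$, $\alpha(t) = \int_{-1}^{1} D(u)\nabla u\, \nabla g\, dx$, and $\beta(t) = \int_{-1}^{1} \sigma(u)(\nabla g)^2\, dx$. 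By Lemma \ref{l05}, $\nabla u \in L^2$ and hence $\alpha \in L^2([0,T]) \subset L^1$, so $|A_\varphi| \le \int_E |\alpha|\, dt$ tends to $0$ with $|E|$ by absolute continuity of the Lebesgue integral. Since $\sigma$ is bounded, $\beta \in L^\infty$ and therefore $K B_\varphi \le K \|\beta\|_\infty \delta$. The boundary term is controlled via $|e^M - 1| \le |M| e^{|M|}$, which gives $|B(u_t, \pm K \varphi(t) g)| \le 2 K \|g\|_\infty e^{K \|g\|_\infty} \mathbf{1}_E(t)$, whence $K^{-1}\int|B|\, dt \le 2 \|g\|_\infty e^{K \|g\|_\infty} \delta$. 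Choosing $K = \log(1/\delta)/(2\|g\|_\infty)$ (assuming $g \not\equiv 0$; the other case is trivial) gives $e^{K \|g\|_\infty} \delta = \sqrt{\delta}$ and $K \delta = \delta\log(1/\delta)/(2\|g\|_\infty)$, so all four terms vanish as $\delta \to 0^+$.

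The absolute continuity of $f$ then follows by testing against signed approximate indicators. Given a finite disjoint family $(a_i,b_i)_{i=1}^n \subset (0,T)$ with $\sum(b_i-a_i) < \delta$, put $s_i := \mathrm{sgn}(f(b_i) - f(a_i))$ and build $\varphi_\varepsilon \in C^1([0,T])$ such that $\varphi_\varepsilon \equiv s_i$ on $[a_i + \varepsilon, b_i - \varepsilon]$, $\varphi_\varepsilon$ interpolates smoothly from $0$ to $s_i$ (resp.\ $s_i$ to $0$) on bands of width $\varepsilon$ at each endpoint, and $\varphi_\varepsilon \equiv 0$ outside $\cup_i [a_i,b_i]$. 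Then $\|\varphi_\varepsilon\|_\infty \le 1$ and $|\mathrm{supp}(\varphi_\varepsilon)| < \delta$. A direct computation combined with the continuity of $f$ (Lemma \ref{l04}) shows that $L_{[0,T]}(\varphi_\varepsilon g) \to \sum_i |f(b_i) - f(a_i)|$ as $\varepsilon \to 0$. By the main estimate, $|L_{[0,T]}(\varphi_\varepsilon g)| \le \omega(\delta)$ uniformly in $\varepsilon$, so passing to the limit yields $\sum_i |f(b_i) - f(a_i)| \le \omega(\delta)$, which is precisely the definition of absolute continuity.

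The principal obstacle is the main estimate: the bare inequality $\hat J_H(\pi) \le I_{[0,T]}(\pi)$ carries the fixed constant $I_{[0,T]}(\pi)$ on its right-hand side, so it gives no decay with $|E|$ on its own. The device is to rescale the test function by the large factor $K = K(\delta)$: the nuisance constant becomes $I_{[0,T]}(\pi)/K$ (small), the quadratic term $-\int \sigma(\nabla H)^2$ produces the small penalty $K B_\varphi \asymp K\delta$, and the exponentially growing boundary contribution $K^{-1}\int |B(u, \pm K \varphi g)| \asymp K^{-1} e^{K\|g\|_\infty}\delta$ is tamed by the sub-exponential balance $K\|g\|_\infty = \tfrac{1}{2}\log(1/\delta)$. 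This simultaneous balancing of three competing scales is exactly what makes the argument work; once it is achieved, the remainder of the proof is a standard approximation.
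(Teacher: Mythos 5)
Your proof is correct, and it takes a genuinely more direct route than the paper's. The paper first proves (Lemma \ref{lem : var borne}) that $t\mapsto\langle\pi_t,g\rangle$ has bounded variation by testing with the time-independent functions $\pm g$ and using the additivity of the rate functional over time intervals (Corollary \ref{cor: separtion I}); it then passes to the induced Stieltjes measure and shows, by testing with $\lambda g$ and letting $\lambda\to\infty$ \emph{after} fixing the small set, that this measure has no atoms and is absolutely continuous with respect to Lebesgue measure. You bypass the bounded-variation/measure-theoretic detour entirely: your main estimate bounds $L_{[0,T]}(\varphi g)$ uniformly over test functions $\varphi$ of unit size supported on sets of measure at most $\delta$, with the explicit coupling $K=\log(1/\delta)/(2\|g\|_\infty)$ balancing the three competing terms ($I_{[0,T]}(\pi)/K$, the quadratic term of order $K\delta$, and the boundary term of order $K^{-1}e^{K\|g\|_\infty}\delta$), and then absolute continuity of $t\mapsto\langle\pi_t,g\rangle$ follows directly by testing against smooth signed approximations of indicators of a disjoint family of intervals, using the uniform continuity from Lemma \ref{l04} to pass to the limit. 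The underlying mechanism (rescaling the test function so that the fixed constant $I_{[0,T]}(\pi)$ is damped while the support of the perturbation is small) is the same as in the paper, but your packaging avoids Lemma \ref{lem : var borne}, the decomposition into atoms plus absolutely continuous part, and the use of Corollary \ref{cor: separtion I}; it also treats $g\in C^1([-1,1])$ directly, as in the statement, whereas the paper's written argument fixes $g\in C^2$. All the individual estimates you use (boundedness of $D$ and $\sigma$, $\nabla u\in L^2$ via Lemma \ref{l05}, the bound $|e^M-1|\le|M|e^{|M|}$ for the boundary term \eqref{11}, and the admissibility of $\varphi(t)g(x)$ as a $C^{1,1}$ test function) check out.
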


\begin{proof}
Fix $g \in C^2([-1,1])$. By Lemma \ref{lem : var borne},
$t \mapsto \<\pi_t,g\>$ is a bounded variation function.
Denote by $\nu$ the signed  measure on $[0,T]$ induced by this
function. Integrating by parts yields that
\begin{align*}
H_T\<\pi_T,g\>-H_0\<\pi_0,g\>-\int_0^TH'_t\, \<\pi_t,g\>\, dt
=\int_0^TH_t \, \nu(dt) \;.
\end{align*}
Our goal is to show that $\nu$ is absolutely continuous with
respect to the Lebesgue measure.

For every $t \in [0,T]$, denoted by $V_t$ the total variation of
$\<\pi_.,g\>$ on $[0,t]$.  Since $t \mapsto \<\pi_t,g\>$ is a bounded
variation function, $V$ is a finite increasing function, and
$\mu\colon \displaystyle A \mapsto \int_A dV_t$ is a positive measure.

\smallskip
\noindent{\it Claim 1:}  The measure $\mu$ has no atoms.
\smallskip

Fix $s \in [0,T]$.  Since $V$ is a bounded variation function, the set
$D = \{s: \mu (\{s\}) >0\}$ is countable. Define sequences
$(a_k)_{k \in \mathbb{N}}$ and $(b_k)_{k \in \mathbb{N}}$ such that
$0 \leq a_k\leq a_{k+1}\leq b_{k+1} \leq b_k\leq T$,
$\underset{k \to \infty}{\lim} a_k=\underset{k \to \infty}{\lim}
b_k=s$ and $\mu(\{b_k\})=\mu(\{a_k\})=0$ for all $k \in \mathbb{N}$.
Then, $0 \leq \mu(\{s\}) \leq \mu([a_k,b_k])=V_{b_k}-V_{a_k}$.

Fix $\lambda>0$. By definition of $V$, for each $\epsilon>0$, there
exists a subdivision $(t_i)_{1 \leq i \leq n}$ of $[a_k,b_k]$ such
that:
\begin{align*}
\lambda \mu(\{s\}) \;=\; 
\lambda(V_{b_k}-V_{a_k})
\leq \lambda \epsilon + \sum_{i=0}^{n-1}|\<\pi_{t_{i+1}},\lambda g\>
-\<\pi_{t_i},\lambda  g\>| \;.
\end{align*}
By the proof of the previous lemma, the right-hand side is bounded by
\begin{equation*}
\begin{aligned}
& \lambda \epsilon +   C_0 \,\lambda\, \int_{a_k}^{b_k} \int_{-1}^1 |\nabla u(t,x)|\,
|\nabla g(x)|\, dx\, dt  \\
&\quad \,+\, C_0 \, (b_k-a_k)\,  \int_{-1}^1
\Big\{ (\lambda \nabla g(x))^2  + e^{\lambda |g|} +1 \Big\}
\, dx \,+\, I_{[a_k,b_k]}(\pi)
\end{aligned}
\end{equation*}
for some finite constant $C_0$ which does not depend on $\lambda$
$\epsilon$, $g$ or $\pi$. Let $\epsilon\to 0$, and then
$k\to + \infty$ to conclude that
\begin{align*}
\mu(\{s \}) \,\leq\, \frac{1}{\lambda} \, I_{[0,T]}(\pi) 
\end{align*}
for all $\lambda>0$. It remains to let $\lambda\to\infty$ to complete
the proof of Claim 1. 

\smallskip
\noindent{\it Claim 2:}  The measure $\mu$ is absolutely continuous. 
\smallskip

Fix a set $K \subset [0,T]$ of zero Lebesgue measure, $\epsilon>0$ and
$\lambda>0$.  Let $G$ be an open set such that $K \subset G$ and
$\Lambda (G) \leq \epsilon_1$ where $\epsilon_1$ will be fixed
afterwards and $\Lambda$ represents the Lebesgue measure. Let
$G= \cup_k ]a_k,b_k[$. By the same computation as
before, 
\begin{align*}
0 \leq \lambda \, \mu(K) \,\leq\,  \lambda \, \mu(G)
& \le    C_0 \,\lambda\, \int_{G} \int_{-1}^1 |\nabla u(t,x)|\,
|\nabla g(x)|\, dx\, dt \\
&\quad \,+\, C_0 \, \Lambda (G) \,  \int_{-1}^1
\Big\{ (\lambda \nabla g(x))^2  + e^{\lambda |g|} +1 \Big\}
\, dx\, dt \,+\,  I_{[0,T]}(\pi)
\end{align*}
for some finite constant $C_0$ which does not depend on $\lambda$
$\epsilon$, $g$ or $\pi$.  Choose $\lambda>0$ and then
$\epsilon_1=\epsilon_1(\lambda,\epsilon)>0$ so that
$(1/\lambda) \, I_{[0,T]}(\pi) \leq \epsilon/2$ and
\begin{equation*}
 C_0 \, \int_{G} \int_{-1}^1 |\nabla u(t,x)|\,
 |\nabla g(x)|\, dx\, dt
 \,+\, \frac{C_0}{\lambda} \, \Lambda (G) \,  \int_{-1}^1
\Big\{ (\lambda \nabla g(x))^2  + e^{\lambda |g|} +1 \Big\}
\, dx\, dt \, \le \, \frac{\epsilon}{2}\;\cdot
\end{equation*}
Therefore, $\mu(K)\leq \epsilon$ for every $\epsilon >0$ so that
$\mu(K)=0$, and Claim 2 is proved. 

It follows from Claim 2 that $\nu$ is absolutely continuous, and there
exists $P\in L^1([0,T])$ such that $\nu(dt) = P_tdt$. This completes
the proof of the proposition.
\end{proof}

\begin{remark}
One can show that $P  \in L^1\log L^1$. 
\end{remark}

\section{Deconstructing the rate functional}
\label{sec5}

The main results of this section, stated in Propositions \ref{p02} and
\ref{p03} below, show that the rate function $I_{[0,T]}(\,\cdot\,)$
can be decomposed as the sum of two rate functions. The first one
measures the cost of the trajectory due to its evolution in the bulk,
while the second one measures the costs due to the boundary
evolution. This decomposition of the rate function is an important
tool in the proof that any trajectory $u$ with finite rate function
can be approximated by a sequence of regular trajectories
$(u^n:n\ge 1)$ in such a way that
$I_{[0,T]}(u^n\,|\gamma) \to I_{[0,T]}(u\,|\gamma)$, the content of
the next section.

\subsection*{Weighted Sobolev spaces}

Recall that $\Omega = [-1,1]$, and let
$\color{blue} \Omega_T = [-1,1]\times [0,T]$.  For a \emph{bounded}
positive function $f: \Omega \to \bb R_+$ (resp.\
$f : \Omega_T \to \bb R_+$), denote by $L^2(f)$ (resp.\ $\bb L^2(f)$)
the Hilbert space of (equivalence classes of) measurable functions
$\{H: \Omega \to \bb R : \int_\Omega H(x)^2 f(x) dx < \infty\}$
(resp.\
$\{H: \Omega_T \to \bb R : \int_{\Omega_T} H(t,x)^2 f(t,x) dt dx <
\infty\}$) endowed with the scalar product $\< \cdot, \cdot\>_f$
(resp.\ $\<\!\< \cdot , \cdot \>\!\> _f$) induced by
\begin{equation*}
\< H \,,\,  G  \> _f \;=\; \int_\Omega H(x) \, G(x)\, f(x)\, dx \;, 
\quad \<\!\< H \,,\,  G  \>\!\> _f \;=\; \int_0^T
\< H_t , G_t\>_{ f(t,\cdot)}\; dt \;.
\end{equation*}
The norm associated to the above scalar products is denoted by $\Vert
\cdot\Vert_f$.  When $f=1$ we omit the index $f$ and denote the spaces
$L^2(1)$, $\bb L^2(1)$ by $L^2(\Omega)$, $L^2(\Omega_T)$,
respectively.

Since $f$ is bounded, $C^\infty_K(\Omega_T)$ is dense in $\bb L^2(f)$.
Moreover, the space of bounded linear functionals on $\bb L^2(f)$ can
be identified with $\bb L^2(1/f)$: any bounded linear functional
$\ell$ on $\bb L^2(f)$ can be represented as
\begin{equation}
\label{f04b}
\ell (G) \;=\; \int_0^T  \<H_t \, , \, G_t\> \; dt
\end{equation}
for some $H$ in $\bb L^2(1/f)$. Indeed, by Riesz's representation
theorem, for each bounded linear functional $\ell$ on $\bb L^2(f)$,
there exists a unique element $\hat H =\hat H_\ell \in \bb L^2(f)$
such that $\ell(G) = \<\!\< \hat H \,,\, G \>\!\> _f$. Let $H= f \,
\hat H$. Clearly, $H$ belongs to $L^2(1/f)$ and we obtain the
representation claimed above.

Denote by $\color{blue} \mc H^{1}_0 (f)$ the Hilbert
spaces induced by the set $C^\infty_K (\Omega_T)$ endowed with the
scalar products, $\<\!\< G,H \>\!\>_{1,f}$ defined by
\begin{equation*}
\<\!\< G,H \>\!\>_{1,f} \;=\; \<\!\< \nabla G, \nabla H
\>\!\>_{f} \;.
\end{equation*}
Let $\color{blue} \Vert \cdot \Vert_{1, f}$ be the norm
associated to the scalar product
$\<\!\<\cdot, \cdot \>\!\>_{1,f}$.  Denote $\mathcal{H}^{-1}(f)$
the dual of $\mathcal{H}_0^{1}(f)$, it is a Hilbert space equipped
with the norm $\| \cdot \|_{-1,f} $ defined by
\begin{align*}
\|L\|_{-1,f}^2=\underset{G \in C_K^{\infty}(]0,T[\times
]-1,1[)}
{\mbox{ sup }} \{ \, 2 L(G) \,-\, \|G\|^2_{1,f}\, \} \;.
\end{align*}
In this formula, $L(G)$ stands for the value of the linear form
$L$ at $G$.  The proof of the next proposition is similar to the one
of Lemma 4.8 in \cite{blm} and is therefore omitted.

\begin{proposition}
\label{prop: norme -1}
A linear functional
$L\colon \mathcal{H}_0^1(\sigma(u)) \to \mathbb{R}$ belongs to
$\mathcal{H}^{-1}(\sigma(u))$ if and only if there exists
$P \in L^2(\sigma(u)^{-1})$ such that $L(H)=\<\<P, \nabla H\>\>$ for
every $H \in C^{\infty}_K(]0,T[\times ]-1,1[)$. In this case,
\begin{align*}
\|L\|_{-1, \sigma(u)}^2=\int_0^T
\left \{\<P_t,P_t\>_{\sigma(u_t)^{-1}}-c_t \right \}dt \mbox{ , }
\end{align*}
\\where $c_t=\<P_t\sigma(u_t)^{-1}\>^2
\<\sigma(u_t)^{-1}\>^{-1}\, \mathbb{1}_{\{\<\sigma(u_t)^{-1}\><\infty\}}$ .
\end{proposition}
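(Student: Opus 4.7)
The plan is to prove the characterization by applying Riesz's representation theorem to the Hilbert space $\mathcal{H}_0^1(\sigma(u))$, and to derive the norm formula via a slice-wise projection in the weighted space $L^2(\sigma(u_t))$. For the sufficiency half of the equivalence, Cauchy--Schwarz gives directly that if $P \in \mathbb{L}^2(\sigma(u)^{-1})$, then
\[
\big|\<\!\< P\,,\, \nabla H\>\!\>\big| \;\le\; \|P\|_{1/\sigma(u)}\, \|\nabla H\|_{\sigma(u)} \;=\; \|P\|_{1/\sigma(u)}\, \|H\|_{1,\sigma(u)}\;,
\]
so $L(H) := \<\!\< P, \nabla H\>\!\>$ defines an element of $\mathcal{H}^{-1}(\sigma(u))$. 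For the converse, given $L \in \mathcal{H}^{-1}(\sigma(u))$, Riesz's theorem produces $Q \in \mathcal{H}_0^1(\sigma(u))$ with $L(H) = \<\!\<\nabla Q, \nabla H\>\!\>_{\sigma(u)}$ for every $H \in C_K^\infty$; setting $P := \sigma(u)\,\nabla Q$ yields $\int_0^T\!\int_{-1}^1 P^2/\sigma(u)\, dx\, dt = \|\nabla Q\|^2_{\sigma(u)} < \infty$, and hence the required representation.

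For the norm computation, I would rewrite
\[
2 L(G) - \|G\|_{1,\sigma(u)}^2 \;=\; \int_0^T \Big\{\, 2 \<P_t, \nabla G_t\> \,-\, \<\sigma(u_t)\,(\nabla G_t)^2\>\,\Big\}\, dt\;,
\]
and reduce the global supremum over $G \in C_K^\infty(]0,T[\times ]-1,1[)$ to a pointwise supremum in $t$ over $f_t = \nabla G_t$ with $G_t \in C_K^\infty(]-1,1[)$; this last class is characterized by the mean-zero constraint $\<1, f_t\> = 0$ coming from $G_t(\pm 1) = 0$. Completing the square with $h_t := P_t/\sigma(u_t)$,
\[
2\<P_t, f_t\> - \<\sigma(u_t), f_t^2\> \;=\; \<P_t,P_t\>_{\sigma(u_t)^{-1}} \,-\, \<\sigma(u_t)\,(f_t - h_t)^2\>\;,
\]
so the slice-wise supremum equals $\<P_t,P_t\>_{\sigma(u_t)^{-1}}$ minus the squared $L^2(\sigma(u_t))$-distance from $h_t$ to the hyperplane $V_t := \{f : \<1,f\>=0\}$. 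Since $V_t$ is the orthogonal complement in $L^2(\sigma(u_t))$ of the line spanned by $\sigma(u_t)^{-1}$, a one-dimensional projection computation gives this distance as $c_t = \<P_t\,\sigma(u_t)^{-1}\>^2/\<\sigma(u_t)^{-1}\>$ when $\<\sigma(u_t)^{-1}\><\infty$; in the opposite case $V_t$ is dense in $L^2(\sigma(u_t))$, the distance vanishes, and this is consistent with the indicator in the statement. Integrating in $t$ and using $\<P_t, h_t\> = \<P_t,P_t\>_{\sigma(u_t)^{-1}}$ yields the claimed identity.

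The main obstacle is the legitimation of the slice-wise supremum: one must construct joint test functions $G \in C_K^\infty(]0,T[\times]-1,1[)$ whose spatial derivative approximates a prescribed measurable family of pointwise maximizers $f^\star_t$. I would start from the spatial primitive $G_t^\star(x) := \int_{-1}^x f_t^\star(y)\, dy$ (whose mean-zero property guarantees $G_t^\star(\pm 1) = 0$), truncate near $t=0$ and $t=T$, and regularize by convolution with smooth kernels in both variables, controlling the remainders through the weighted Cauchy--Schwarz bounds already used. Care is needed because the weight $\sigma(u_t)^{-1}$ may fail to be integrable where $u$ approaches $0$ or $1$; this is precisely the case covered by the indicator $\mathbb{1}_{\{\<\sigma(u_t)^{-1}\><\infty\}}$ in the definition of $c_t$, and it forces the two-case structure of the formula.
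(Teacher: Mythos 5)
Your treatment of the equivalence is fine: Cauchy--Schwarz gives that every $P\in L^2(\sigma(u)^{-1})$ produces a bounded functional, and Riesz's theorem gives the converse, provided you state the identification of the abstract completion $\mathcal H^1_0(\sigma(u))$ with the closure $\mc G$ of $\{\nabla G:\,G\in C^\infty_K(]0,T[\times]-1,1[)\}$ in $\bb L^2(\sigma(u))$, so that ``$\nabla Q$'' is meaningful. The completion of squares and the two-case slice computation of the distance are also the correct algebra, and they give the easy inequality $\|L\|^2_{-1,\sigma(u)}\le\int_0^T\{\<P_t,P_t\>_{\sigma(u_t)^{-1}}-c_t\}\,dt$.

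The genuine gap is exactly the step you flag and then only sketch: the reverse inequality, i.e.\ that the supremum over \emph{joint} test functions $G\in C^\infty_K(]0,T[\times]-1,1[)$ reaches the integral of the slice-wise suprema. Your construction does not deliver it. First, pointwise maximizers $f^\star_t$ need not exist: when $\<\sigma(u_t)^{-1}\>=\infty$ the mean-zero class is dense in $L^2(\sigma(u_t))$ and the supremum is not attained in it, and when $\<\sigma(u_t)^{-1}\><\infty$ the maximizer over the closed hyperplane is the projection of $h_t=P_t/\sigma(u_t)$, which is neither smooth nor compactly supported (and on degenerate slices need not even be locally Lebesgue integrable, so the primitive $G^\star_t(x)=\int_{-1}^x f^\star_t$ can be undefined). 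Second, even after replacing maximizers by near-maximizers chosen inside $C^\infty_K(]-1,1[)$ slice by slice, you must select them \emph{measurably} in $t$ and then smooth in time; neither the measurable selection nor the mollification is justified (convolution is not continuous on $\bb L^2(\sigma(u))$ for a degenerate weight, and $u_t$, $P_t$ carry no continuity in $t$ to exploit). The clean way around all of this --- essentially the argument of Lemma 4.8 in \cite{blm}, to which the paper defers --- avoids constructing maximizing test functions altogether: with $h=P/\sigma(u)\in\bb L^2(\sigma(u))$ one has $\|L\|^2_{-1,\sigma(u)}=\|\Pi_{\mc G}h\|^2_{\sigma(u)}=\|h\|^2_{\sigma(u)}-\|\Pi_{\mc G^\perp}h\|^2_{\sigma(u)}$, and $\mc G^\perp$ is identified explicitly: testing with products $a(t)g(x)$ and a countable dense family of $g$'s shows that $w\in\mc G^\perp$ if and only if, for a.e.\ $t$, $\sigma(u_t)w_t$ is a.e.\ constant in $x$, the constant being forced to vanish when $\<\sigma(u_t)^{-1}\>=\infty$. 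Since $\mc G^\perp$ is parametrized by a scalar function $\kappa_t$, the projection of $h$ onto it is computed slice by slice with an explicit, hence measurable, optimal $\kappa_t$, and its squared norm is $\int_0^T c_t\,dt$. Either carry out this duality argument or supply a full measurable-selection construction; as written, the key inequality is asserted rather than proved.
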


\begin{lem}
\label{l01} 
Fix $T>0$.  Let $\pi(t,x)=u(t,x)dx$ such that
$I_{[0,T]}(\pi)<+\infty$. Then,
$H \mapsto \int_0^T \<\pi_t, \partial_t H_t\>dt$ is continuous on
$\mathcal{H}_0^1(\sigma(u))$ and there exists
$P \in L^2(\sigma(u)^{-1})$ such that
\begin{equation*}
-\int_0^T \<\pi_t, \partial_t H_t\> dt
\,=\,
\<\<\nabla H,P\>\> \;\; \text{for all}
\;\; H \in C^{\infty}_K(]0,T[\times ]-1,1[)\;.
\end{equation*}
\end{lem}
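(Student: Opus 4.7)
The plan is to prove the continuity estimate directly by testing the rate functional against $\lambda H$ for $H \in C_K^\infty(]0,T[\times]-1,1[)$ and optimizing over $\lambda \in \bb R$; the representation follows from Proposition~\ref{prop: norme -1}. Set $M = I_{[0,T]}(\pi)<\infty$ and fix $H \in C_K^\infty(]0,T[\times]-1,1[)$. Since $H$ has compact support in the open set $]0,T[\times]-1,1[$, the boundary evaluation $H(t,\pm 1)$ vanishes for all $t$, so $B(u_t,\lambda H_t)\equiv 0$, and the time boundary terms in $L_{[0,T]}(\lambda H)$ disappear, leaving $L_{[0,T]}(\lambda H) = -\lambda \int_0^T \<\pi_t, \partial_t H_t\>\, dt$.

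Consequently, for every $\lambda \in \bb R$,
\begin{equation*}
M \;\ge\; J_{\lambda H}(\pi) \;=\; \lambda\, A(H) \;-\; \lambda^2\, \|H\|_{1,\sigma(u)}^2 \;,
\end{equation*}
where
\begin{equation*}
A(H) \;:=\; -\int_0^T \<\pi_t, \partial_t H_t\>\, dt \;+\; \int_0^T \<D(u_t)\nabla u_t, \nabla H_t\>\, dt \;.
\end{equation*}
Optimizing in $\lambda$ yields the quadratic bound $A(H)^2 \le 4 M\, \|H\|_{1,\sigma(u)}^2$, hence $|A(H)| \le 2\sqrt{M}\,\|H\|_{1,\sigma(u)}$.

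Next I would control the diffusion term in $A(H)$ in the $\|\cdot\|_{1,\sigma(u)}$-norm. Writing $D(u_t)\nabla u_t \cdot \nabla H_t = [D(u_t)\nabla u_t/\sigma(u_t)] \cdot \nabla H_t \cdot \sigma(u_t)$ and applying Cauchy--Schwarz with weight $\sigma(u)$ gives
\begin{equation*}
\Big| \int_0^T \<D(u_t)\nabla u_t, \nabla H_t\>\, dt \Big|
\;\le\; \Big(\int_0^T\!\!\int_{-1}^1 \frac{D(u_t)\,|\nabla u_t|^2}{\chi(u_t)}\, dx\, dt\Big)^{1/2} \|H\|_{1,\sigma(u)}\;,
\end{equation*}
using $\sigma = \chi\, D$. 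Since $D(\rho) = 1 + 2a\rho$ is bounded on $[0,1]$, Lemma~\ref{l05} bounds the integral factor by a constant depending only on $M$. Subtracting this from the bound on $A(H)$, I obtain a constant $C = C(M)$ with
\begin{equation*}
\Big|\!-\!\int_0^T \<\pi_t, \partial_t H_t\>\, dt\Big| \;\le\; C\, \|H\|_{1,\sigma(u)}
\qquad \text{for all } H \in C_K^\infty(]0,T[\times]-1,1[)\;.
\end{equation*}

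This is the required continuity of the linear functional $L(H) := -\int_0^T \<\pi_t,\partial_t H_t\>\, dt$ on the dense subspace $C_K^\infty$ of $\mc H^1_0(\sigma(u))$, so $L$ extends uniquely to a bounded linear functional on $\mc H^1_0(\sigma(u))$, i.e.\ $L \in \mc H^{-1}(\sigma(u))$. Applying Proposition~\ref{prop: norme -1} produces $P \in L^2(\sigma(u)^{-1})$ with $L(H) = \<\!\<P,\nabla H\>\!\>$ for all test functions $H$, which is the stated representation. The only mildly delicate point is the bookkeeping around the boundary term vanishing and the Cauchy--Schwarz step with the $\sigma(u)$-weight; everything else is a one-line quadratic optimization combined with Lemma~\ref{l05}.
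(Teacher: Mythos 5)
Your proposal is correct and follows essentially the same route as the paper: test the rate functional against scalar multiples of compactly supported $H$ (so the boundary terms $B$ and the time-boundary terms drop), optimize over the scalar, control the term $\int_0^T\<D(u_t)\nabla u_t,\nabla H_t\>\,dt$ through the weighted Cauchy--Schwarz/Young inequality and Lemma~\ref{l05}, conclude boundedness in $\|\cdot\|_{1,\sigma(u)}$, and invoke Proposition~\ref{prop: norme -1}. The only difference is cosmetic bookkeeping: the paper absorbs the diffusion term inside the optimization via Young's inequality, while you bound it separately after optimizing, which changes nothing of substance.
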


\begin{proof}
Define the function
$A\colon C^{\infty}_K(]0,T[\times [-1,1]) \mapsto \mathbb{R}$ by
\begin{align*}
A(H)= -\int_0^T \<\pi_t, \partial_t H_t\> dt \mbox{ . }
\end{align*}
Let $H \in C^{\infty}_K(]0,T[\times [-1,1])$. By definition of
$\hat{J}_H(\pi)$:
\begin{align*}
A(H)&= \hat{J}_H(\pi)+\int_0^T \int_{-1}^1\sigma(u(t,x))(\nabla
H(t,x))^2dx dt
\\
& - \int_0^T\int_{-1}^1D(u(t,x))\nabla u(t,x) \nabla H(t,x)dx dt
\,+\, \int_0^T B(u_t,H_t)dt \mbox{ . }
\end{align*}
Since $H(t,.)$ has compact support in $]-1,1[$, the last term on the
right-hand side vanishes. Replace $H$ by $RH$ for some $R>0$.  We may
estimate $\hat{J}_{RH}(\pi)$ by $I_{[0,T]}(\pi)$. By linearity of $A$,
since $D$ is bounded,
and $2xy\leq ax^2+\frac{1}{a}y^2$ for all $a>0$, 
\begin{align*}
A(H)\,\le\, \frac{1}{R} \Big\{ I_{[0,T]}(\pi)
+2 R^2 \|H \|^2_{1,\sigma(u)}
\,+\, C_0 \int_0^T \int_{-1}^1
\frac{(\nabla u(t,x))^2}{\sigma(u(t,x))} dx dt \, \Big \}  
\end{align*}
for some finite constant $C_0$ which may change from line to line.  

By Lemma \ref{l05}, the previous expression is less
than or equal to
\begin{align*}
\frac{C_0}{R}\left  \{I_{[0,T]}(\pi)+1\right \}
+2R\| H\|_{1,\sigma(u)}^2
\;. 
\end{align*}
Replacing $H$ by $-H$, and optimizing over $R$ yields that
\begin{align*}
A(H)^2 \, \leq\, C_0\, (I_{[0,T]}(\pi)+1)\,
\| H\|_{1,\sigma(u)}^2 \;.
\end{align*}

This proves that the linear functional $A$ is bounded in
$\mathcal{H}_0^1(\sigma(u))$. To complete the proof, it remains to
recall the assertion of Proposition \ref{prop: norme -1}.
\end{proof}

\subsection*{First decomposition}

Until the end of this section, fix $T>0$ and $\pi(t,dx)=u(t,x)dx$ such
that $I_{[0,T]}(\pi)<+\infty$.  Recall the definition of the linear
operator $L_0$ introduced in \eqref{07}. Denote
$L\colon C^{1,2}([0,T]\times [-1,1]) \to \mathbb{R}$ the functional
given by
\begin{align}
\label{eq: L}
L(H)&=L_0(H) + \int_0^T\int_{-1}^1D(u(t,x))\nabla u(t,x) \nabla H(t,x)dx dt \notag\\
&+\int_0^T \Bigl \{(u_t(1)-{\mf b})H_t(1)+(u_t(-1)-{\mf a})H_t(-1) \Bigl \} dt \mbox{ , }
\end{align} 
Denote by $\color{blue} C_0^{1,2}([0,T]\times [-1,1])$ the space of
functions $H \in C^{1,2}([0,T]\times [-1,1])$ such that
$H_T=H_0=0$. We use the same notation $L_0$, $L$ when these
functionals are defined on $C_0^{1,2}([0,T]\times [-1,1])$.

For functions $g\in C^2( [-1,1])$,  $H \in C_0^1([0,T])$, denote by
$Hg$ the function defined by $Hg (t,x) = H(t)g(x)$. Next result is an
elementary consequence of Proposition  \ref{th: IPP}. 

\begin{lem}
\label{l02} 
Fix $g\in C^2( [-1,1])$.  Then, there exists $P^g \in L^1([0,T])$ such
that 
\begin{align*}
L_0(Hg) \,=\, \int_0^TH(t)P^g_t dt
\end{align*}
for every $H \in C_0^1([0,T])$.
\end{lem}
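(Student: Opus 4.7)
The plan is to recognize that the statement is essentially a direct corollary of Proposition \ref{th: IPP}, and the work reduces to unfolding the definition of $L_0$ and matching terms.

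First, I would expand $L_0(Hg)$ using the definition in \eqref{07}: for $H \in C_0^1([0,T])$ and $g \in C^2([-1,1])$, we have $(Hg)(0,x) = H(0) g(x) = 0$ and $(Hg)(T,x) = H(T) g(x) = 0$, so the boundary terms $\langle \pi_T, (Hg)_T\rangle$ and $\langle \pi_0, (Hg)_0\rangle$ both vanish. Since $\partial_t (Hg)(t,x) = H'(t)\, g(x)$, this leaves
\begin{equation*}
L_0(Hg) \;=\; -\, \int_0^T \langle \pi_t, \partial_t (Hg)_t \rangle\, dt
\;=\; -\, \int_0^T H'(t) \, \langle \pi_t, g\rangle \, dt \;.
\end{equation*}

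Next, I would apply Proposition \ref{th: IPP} with the given $g$ (noting that $g \in C^2([-1,1]) \subset C^1([-1,1])$, so the proposition applies). This yields the existence of $P^g \in L^1([0,T])$ such that
\begin{equation*}
H(T) \, \langle \pi_T, g\rangle \,-\, H(0) \, \langle \pi_0, g\rangle
\,-\, \int_0^T H'(t)\, \langle \pi_t, g\rangle \, dt
\;=\; \int_0^T H(t) \, P^g_t \, dt
\end{equation*}
for every $H \in C^1([0,T])$. Restricting to $H \in C_0^1([0,T])$, the boundary terms on the left-hand side vanish, and we obtain
\begin{equation*}
-\, \int_0^T H'(t) \, \langle \pi_t, g\rangle \, dt \;=\; \int_0^T H(t) \, P^g_t \, dt \;.
\end{equation*}
Combining with the computation of the first paragraph gives $L_0(Hg) = \int_0^T H(t) \, P^g_t \, dt$, which is the claimed identity.

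There is no real obstacle here: the entire content of Lemma \ref{l02} is packaged inside the absolute-continuity statement of Proposition \ref{th: IPP}. The only thing to verify is the compatibility of the boundary-term bookkeeping, which is immediate from $H \in C_0^1([0,T])$ forcing $H(0) = H(T) = 0$. The hypothesis $g \in C^2$ is stronger than what is strictly needed (Proposition \ref{th: IPP} only requires $g \in C^1$), and could presumably be relaxed; the stronger assumption is kept here because it is what the subsequent decomposition of the rate functional in Section \ref{sec5} will invoke.
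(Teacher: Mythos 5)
Your proof is correct and follows exactly the route the paper intends: the paper presents Lemma \ref{l02} as an elementary consequence of Proposition \ref{th: IPP}, and your unfolding of the definition of $L_0$ plus the vanishing of the boundary terms for $H\in C_0^1([0,T])$ is precisely that argument (with the section's standing assumption $I_{[0,T]}(\pi)<\infty$ making Proposition \ref{th: IPP} applicable).
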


Next result requires some notation.  Denote by
$P \in L^2(\sigma(u)^{-1})$ the function introduced in Lemma \ref{l01}
and such that:
\begin{align}
\label{eq: def P}
-\int_0^T \<\pi_t, \partial_t H_t\>dt=\<\<\nabla H,P\>\>    
\end{align}
for every $H \in C_K^{\infty}(]0,T[\times ]-1,1[)$. If we replace
$P_t$, by $P_t-\< P_t/\sigma(u_t)\> / \<1/\sigma(u_t)\>$, we may
assume that $\< P_t/\sigma(u_t)\>=0$ for all $0\le t\le T$.

Let $X$, $\mathbb{1} \colon [-1,1] \to \mathbb{R}$ be the functions
given by $X(x) = x$, $\mathbb{1} (x)=1$. By Proposition \ref{th: IPP},
there exist $P^g$ in $L^1([0,T])$ such that 
\begin{align*}
-\int_0^T H'(t) \<\pi_t,g\>dt \,=\, \int_0^T H(t)\, P_t^g\, dt 
\end{align*}
for $g=X$, $\mathbb{1}$ and every $H \in C_0^1([0,T])$.

Denote by $M\in L^2(\sigma(u)^{-1})$, $g,h\in L^1([0,T])$
the functions defined by:
\begin{gather}
\label{04}
M_t=P_t+D(u_t)\nabla u_t \mbox{ , } \\
g(t) \,=\, 2 \,
\Big\{\, P^{\mathbb{1}}(t)-P^X(t)-\<D(u_t)\nabla u_t\>+2(u_t(-1)-{\mf a})
+ \<M_t\> - 2\<M_t\sigma(u_t)^{-1}\>\mathcal{S}_t \,\Big\} \nonumber
\\
h(t) \,=\, 2 \,
\Big\{\, P^{\mathbb{1}}(t)+ P^X(t)+ \<D(u_t)\nabla u_t\>+2(u_t(1)-{\mf b})
-\<M_t\> + 2 \<M_t \sigma(u_t)^{-1}\>\mathcal{S}_t \,\Big\} \nonumber \mbox{ , }
\end{gather}
where $\color{blue} \mathcal{S}_t= 1/ \<\sigma(u_t)^{-1}\>$. Lastly,
denote by $\Psi\colon [0,1]^2 \times \bb R^2 \to \bb R_+$ the function
given by:
\begin{align*}
\Psi(r,s,\alpha ,\beta)
&=4{\mf a}(1-r)(e^{\alpha}-\alpha-1)
+4r(1-{\mf a})(e^{-\alpha}+\alpha-1) \\
&+4{\mf b}(1-s)(e^{\beta}-\beta-1)
+4s(1-{\mf b})(e^{-\beta}+\beta-1) \mbox{ . }
\end{align*}
Note that $\Psi$ is non-negative, linear in $(r,s)$ and convex in
$(\alpha,\beta)$.

\begin{prop}
\label{p02}
$I_{[0,T]}(\pi)=I_{[0,T]}^{(1)}(\pi)+I_{[0,T]}^{(2)}(\pi)$ where:
\begin{align*}
I_{[0,T]}^{(1)}(\pi) \,=\, \frac{1}{4}
\, \int_0^T \Big\{\,  \<M_t,M_t\>_{\sigma(u_t)^{-1}}
\,-\, \<M_t\sigma(u_t)^{-1}\>^2\mathcal{S}_t \,\Big\}\,
dt \;,
\end{align*}
\begin{align*}
I_{[0,T]}^{(2)}(\pi) \;=\; 
\frac{1}{4} \, \underset{\alpha, \beta}{ \sup }
\int_0^T \Big \{ \, \alpha(t)\, g(t) + \beta(t)\, h(t) \,
\, -\, \Upsilon_t (u_t(-1),u_t(1),\alpha(t),\beta(t)) \Big \} \, dt   \;.
\end{align*}
where the supremum is carried out over all $\alpha$,
$\beta\in C_0^1([0,T])$, and
\begin{equation}
\label{03}
{\color{blue} \Upsilon_t(r,s,\alpha,\beta)} \,:=\,
4\, (\alpha -\beta )^2 \, \mathcal{S}_t
\,+\, \Psi(r,s,\alpha ,\beta) \;.
\end{equation}
\end{prop}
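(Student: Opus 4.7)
The plan is to split any test function into an interior piece vanishing at $x=\pm 1$ plus an explicit boundary lift carrying the traces, and then to optimize in two stages: first over the interior piece (producing $I^{(1)}_{[0,T]}$ via Proposition~\ref{prop: norme -1}), then over the traces (producing $I^{(2)}_{[0,T]}$). By Lemma~\ref{lem: I bord nul} the supremum defining $I_{[0,T]}(\pi)$ may be restricted to $H \in C^{1,2}_K(]0,T[\times[-1,1])$; for such $H$ the traces $\alpha(t):=H(t,-1)$ and $\beta(t):=H(t,1)$ lie in $C^1_0([0,T])$. Choosing the linear lift $\phi_-(x)=(1-x)/2$, $\phi_+(x)=(1+x)/2$, set $H^0:=H-\alpha\phi_--\beta\phi_+$, so that $H^0(t,\pm 1)=0$ and $(H^0,\alpha,\beta)$ parameterizes $H$ bijectively. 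Hence $\sup_H \hat J_H(\pi) = \sup_{\alpha,\beta}\sup_{H^0}\hat J_H(\pi)$.

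Substitute the decomposition into $\hat J_H(\pi)$, using $\nabla H=\nabla H^0+(\beta-\alpha)/2$; Lemma~\ref{l01} to rewrite $L_0(H^0)=\<\<\nabla H^0,P\>\>$ (after normalizing $P$ by $\<P_t/\sigma(u_t)\>=0$); Lemma~\ref{l02} with $\phi_\pm=(\mathbb{1}\pm X)/2$ to obtain $L_0(\alpha\phi_-+\beta\phi_+)=\tfrac12\int_0^T[(\alpha+\beta)P^{\mathbb{1}}+(\beta-\alpha)P^X]\,dt$; and the elementary algebraic identity
\begin{equation*}
-B(u_t,H_t) \,=\, -\tfrac14\Psi(u_t(-1),u_t(1),\alpha(t),\beta(t)) \,+\, \alpha(t)[u_t(-1)-\mf a] \,+\, \beta(t)[u_t(1)-\mf b]
\end{equation*}
(verified by expanding both sides in the four monomials $\mf a(1-u_t(-1))$, $u_t(-1)(1-\mf a)$ and their $\mf b$-analogues). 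Grouping, one rewrites
\begin{equation*}
\hat J_H(\pi) \,=\, \<\<\nabla H^0,N\>\> \,-\, \<\<\sigma(u)\nabla H^0,\nabla H^0\>\> \,+\, \mc R(\alpha,\beta),
\end{equation*}
where $N_t := M_t - (\beta(t)-\alpha(t))\sigma(u_t)$ and $\mc R$ collects everything not depending on $H^0$.

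For the inner supremum, apply Proposition~\ref{prop: norme -1} to the linear functional $K\mapsto\<\<N,\nabla K\>\>$ on $\mc H_0^1(\sigma(u))$ (which belongs to $\mc H^{-1}(\sigma(u))$ since $M\in L^2(\sigma(u)^{-1})$ by Lemma~\ref{l05} and $\sigma$ is bounded), together with the scaling identity $\sup_G[L(G)-Q(G)] = \tfrac14\sup_G[2L(G)-Q(G)]$ for any positive quadratic form $Q$, and density of $C_K^\infty$ in $\mc H_0^1(\sigma(u))$. This yields
\begin{equation*}
\sup_{H^0}\big\{\<\<\nabla H^0,N\>\>-\<\<\sigma(u)\nabla H^0,\nabla H^0\>\>\big\} \,=\, \tfrac14\int_0^T\!\big\{\<N_t,N_t\>_{\sigma(u_t)^{-1}}-\mc S_t\<N_t\sigma(u_t)^{-1}\>^2\big\}\,dt.
\end{equation*}
Expanding with $q_t:=\beta(t)-\alpha(t)$ and using $\<N_t,N_t\>_{\sigma(u_t)^{-1}} = \<M_t,M_t\>_{\sigma(u_t)^{-1}}-2q_t\<M_t\>+q_t^2\<\sigma(u_t)\>$ and $\<N_t\sigma(u_t)^{-1}\>=\<M_t\sigma(u_t)^{-1}\>-2q_t$, the pure-$M$ contribution is exactly the integrand of $I^{(1)}_{[0,T]}(\pi)$, and the remaining terms combine with $\mc R(\alpha,\beta)$.

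The decisive cancellation in the bookkeeping is that the $+(q_t^2/4)\<\sigma(u_t)\>$ term arising in the expansion above cancels precisely against the $-(q_t^2/4)\<\sigma(u_t)\>$ term sitting inside $\mc R(\alpha,\beta)$ (the latter coming from $(\nabla H)^2\supset (\beta-\alpha)^2/4$), leaving only $-q_t^2\mc S_t$. Regrouping all surviving terms by their degree in $(\alpha,\beta)$, and invoking $\<M_t\>=\<P_t\>+\<D(u_t)\nabla u_t\>$ together with the normalization $\<P_t/\sigma(u_t)\>=0$, one reads off the coefficient of $\alpha(t)$ as $g(t)/4$, the coefficient of $\beta(t)$ as $h(t)/4$, and the quadratic part as $-q_t^2\mc S_t-\Psi/4=-\Upsilon_t/4$. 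Taking $\sup_{\alpha,\beta}$ over $C^1_0([0,T])$ delivers $I_{[0,T]}(\pi)=I^{(1)}_{[0,T]}(\pi)+I^{(2)}_{[0,T]}(\pi)$. The main obstacle is precisely this algebraic bookkeeping: the clean identification of the $g$, $h$, and $\Upsilon$ coefficients relies entirely on the cancellation of the two $\<\sigma(u_t)\>$ terms, which is what replaces a spurious $\<\sigma(u_t)\>$ by the correct $\mc S_t$.
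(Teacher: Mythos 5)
Your argument is correct and is essentially the paper's own proof: the same splitting of a compactly supported test function into an interior part vanishing at $x=\pm 1$ plus the affine-in-$x$ lift of the traces (your $\alpha\phi_-+\beta\phi_+$ is the paper's $H^{(1)}$), the inner supremum evaluated via Proposition \ref{prop: norme -1}, and the same expansion with the cancellation of the $(\beta-\alpha)^2\langle\sigma(u_t)\rangle/4$ terms leading to the identification of $g$, $h$ and $\Upsilon_t$. The only difference is bookkeeping: the paper rescales $H\mapsto H/2$ at the outset to extract the global factor $1/4$, while you keep $H$ and use the scaling identity at the inner-supremum stage, which is equivalent.
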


\begin{proof}
By Lemma \ref{lem: I bord nul}, and replacing in the supremum the
function $H$ by $H/2$, yields that
\begin{align*}
I_{[0,T]}(\pi) \,=\,
\frac{1}{4}\underset{H}{\mbox { sup }}
\Big \{ 2L(H)-\int_0^T \|\nabla H_t\|_{\sigma(u_t)}^2 dt
-\int_0^T \Psi(u_t, H_t)\, dt \Big \}\;,
\end{align*}
where $L$ has been introduced in (\ref{eq: L}) and the supremum is
carried over all functions $H \in C_K^{1,2}(]0,T[\times [-1,1])$.  In
this formula and below, $\Psi(u_t,H_t) = \Psi (u_t(-1),u_t(1),$
$H_t(-1)/2, H_t(1)/2)$.
 
For each $H \in C^{1,2}_0([0,T]\times [-1,1])$, decompose $H$ as
$H=H^{(0)}+H^{(1)}$ where
\begin{align*}
H^{(1)}(t,x)=\frac{H(t,1)-H(t,-1)}{2}x+\frac{H(t,-1)+H(t,1)}{2} \mbox{ . }
\end{align*}
Note that $H^{(0)}(t,\pm1)=0$, $H^{(1)}(t,\pm 1)=H(t,\pm 1)$ for all
$0 \leq t \leq T$. By linearity of $L$, the previous expression is
equal to
\begin{align*}
& \frac{1}{4} \underset{H^{(1)}}{\mbox { sup }}
\Big \{2L(H^{(1)})-\int_0^T \|\nabla H_t^{(1)}\|_{\sigma(u_t)}^2 dt
-\int_0^T \Psi (u_t, H_t^{(1)})\, dt  \\
&  \quad + \underset{H^{(0)} }{\mbox { sup }}
\Big[\, 2 L(H^{(0)})-2\int_0^T
\<\nabla H_t^{(0)},\nabla H_t^{(1)}\>_{\sigma(u_t)}dt
-\int_0^T \|\nabla H_t^{(0)}\|_{\sigma(u_t)}^2 dt\, 
\Big ]\,  \Big \} \;.
\end{align*} 

We first compute the supremum over the function $H^{(0)}$.  Since
$H^{(0)}$ belongs to $C^{1,2}_0([0,T] \times [-1,1])$ and vanishes at
$x=\pm 1$, and since the set of smooth functions
$C_K^{\infty}(]0,T[\times ]-1,1[)$ is dense in
$C_0^{1,2}([0,T]\times ]-1,1[)$, the supremum over $H^{(0)}$ is equal
to:
\begin{align*}
\underset{H \in C_K^{\infty}(]0,T[\times ]-1,1[ )}{\mbox { sup }}
\Big\{ \, 2L(H)-2\int_0^T\<\nabla H_t,\nabla
H_t^{(1)}\>_{\sigma(u_t)}dt-\int_0^T \|\nabla H_t\|_{\sigma(u_t)}^2 \,
dt \, \Big \} \mbox{ . }
\end{align*}
By the definition of $P$, given in (\ref{eq: def P}), and the one of
$M$, introduced in (\ref{04}), the previous supremum is equal to
\begin{align*}
\underset{H \in C_K^{\infty}(]0,T[\times ]-1,1[ )}{\mbox { sup }}
\Big\{ 2\int_0^T \big\< \, M_t-\sigma(u_t)\, \nabla H_t^{(1)}
\,,\, \nabla H_t \,\big\>
\, dt-\int_0^T \|\nabla H_t\|_{\sigma(u_t)}^2 \, dt \Big \} \;.
\end{align*}
Since $M_t- \sigma(u_t)\nabla H_t^{(1)}$ belongs to
$L^2(\sigma(u)^{-1})$, by Proposition \ref{prop: norme -1}, the
previous supremum is equal to :
\begin{align*}
\int_0^T \Big\{\, \Vert\, M_t-\sigma(u_t)\nabla H_t^{(1)}
\Vert^2_{\sigma(u_t)^{-1}}
\,-\, \<\, [M_t-\sigma(u_t)\nabla H_t^{(1)}]\, \sigma(u_t)^{-1}\>^2
\, \mathcal{S}_t \,\Big\}\, dt \;.
\end{align*}
Expanding the squares and by definition of $I_{[0,T]}^{(1)}(\pi)$, this
expression is equal to
\begin{align*}
& 4\, I_{[0,T]}^{(1)}(\pi)+\int_0^T \|\nabla
H_t^{(1)}\|_{\sigma(u_t)}^2 \, dt 
\,-\, \int_0^T\<\nabla H_t^{(1)}\>^2 \, \mathcal{S}_t \, dt \\
& \quad -\, 2\, \int_0^T \<M_t,\nabla H_t^{(1)}\>dt
\,+\, 2\, \int_0^T\<M_t\sigma(u_t)^{-1}\>\<\nabla H_t^{(1)}\>
\, \mathcal{S}_t\, dt \;.
\end{align*}
Note that the term
$\int_0^T \|\nabla H_t^{(1)}\|_{\sigma(u_t)}^2 \, dt$ cancels with one
which appears in the formula for $I_{[0,T]}(\pi)$

Up to this point, we proved that $I_{[0,T]}(\pi)$ is equal to
\begin{align*}
I_{[0,T]}^{(1)}(\pi)+\frac{1}{4}
\underset{H^{(1)}}{\mbox { sup }} \Big \{\, & 2L(H^{(1)})-\int_0^T \Psi
\bigl (u_t, H^{(1)}_t)dt 
-\int_0^T\<\nabla H_t^{(1)}\>^2 \mathcal{S}_tdt \\
& -2\int_0^T \<M_t,\nabla H_t^{(1)}\>dt 
+2\int_0^T\<M_t\sigma(u_t)^{-1}\>\<\nabla H_t^{(1)}\>
\mathcal{S}_tdt \Big \} \mbox{ . }
\end{align*}

Let $\alpha(t) = H^{(1)}(t,-1)/2$, $\beta (t) = H^{(1)}(t,1)/2$ so
that $\alpha$, $\beta \in C_0^1([0,T])$, and
$H^{(1)} = (\beta -\alpha) X + (\beta+\alpha) \, \bb 1$, where the
functions $X$, $\bb 1$ have been introduced just before the statement
of the proposition.  Replace $H^{(1)}$ by
$(\beta -\alpha) X + (\beta+\alpha) \, \bb 1$ in the previous
formula. A straightforward computation yields that the expression
inside braces coincides with the one appearing in the definition of
$I_{[0,T]}^{(2)}(\pi)$. This completes the proof of the lemma since
the supremum over $H^{(1)}$ corresponds to one over
$\alpha$, $\beta \in C_0^1([0,T])$.
\end{proof}   

Let $\Phi_t\colon [0,1]^2\times \mathbb{R}^2 \to \mathbb{R}$,
$0\le t\le T$, be given by:
\begin{equation}
\label{05}
{\color{blue} \Phi_t(r,s,x,y)} \,:=\,
\underset{\alpha, \beta\in \mathbb{R}^2}
{\mbox { sup }}
\Bigl \{ \alpha x +\beta y-\Upsilon_t(r,s,\alpha,\beta) \Bigl \}    
\end{equation}
where $\Upsilon_t$ has been introduced in \eqref{03}.  Taking
$\alpha=\beta=0$ yields that $\Phi_t(r,s,x,y)\geq 0$. Moreover, since
$\Psi$ is strictly convex in $(\alpha,\beta)$, by Theorem 26.6 in
\cite{rt}, for each $(x,y) \in \mathbb{R}^2$ there exists
$(\alpha,\beta) \in \mathbb{R}^2$ such that
\begin{align*}
\Phi_t(r,s,x,y)
=\alpha x +\beta y-\Upsilon_t(r,s,\alpha,\beta)  \mbox{ . }
\end{align*}

\begin{prop}
\label{p03}
Fix $\pi(t,dx)=u(t,x)dx$ such that $I_{[0,T]}(\pi)<+ \infty$. Assume
that there exists $\epsilon_0>0$ such that
$\epsilon_0 \le u \le 1-\epsilon_0$. Then, 
\begin{align}\label{eq: I^2 sup int}
I_{[0,T]}^{(2)}(\pi)&=\frac{1}{4}\int_0^T
\Phi_t\bigl(u_t(-1),u_t(1),g(t),h(t)\bigl) \, dt \mbox{ . }
\end{align}
Moreover, there exists a finite constant $C(\epsilon_0)$ such that
\begin{align}
\label{06}
e^{|\alpha(t)|}+e^{|\beta(t)|} \leq C(\epsilon_0)\,
\big \{\, 1+|g(t)|+|h(t)| \, \big \}
\end{align}
for all $0 \leq t \leq T$. In this formula, $g$, $h$ are the functions
introduced in \eqref{04}, and $(\alpha(t),\beta(t))$ the pair which
optimizes the variational problem \eqref{05} with $r=u_t(-1)$,
$s=u_t(1)$, $x=g(t)$, $y=h(t)$.
\end{prop}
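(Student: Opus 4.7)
The plan is to prove the two assertions in the stated order, using the bound \eqref{06} centrally in the argument for \eqref{eq: I^2 sup int}. For \eqref{06}, I would rely on the first-order optimality conditions for the pointwise variational problem \eqref{05}. Since $\Upsilon_t$ is strictly convex in $(\alpha,\beta)$, the unique optimizer $(\alpha(t),\beta(t))$ satisfies $g(t) = \partial_\alpha \Upsilon_t$ and $h(t) = \partial_\beta \Upsilon_t$, and a direct calculation gives
\begin{equation*}
\partial_\alpha \Upsilon_t \,=\, 8(\alpha-\beta)\,\mathcal{S}_t
\,+\, 4\,\mathfrak{a}(1-r)\, e^{\alpha} \,-\, 4\, r\,(1-\mathfrak{a})\, e^{-\alpha}
\,+\, 4(r - \mathfrak{a})\;,
\end{equation*}
with an analogous expression for $\partial_\beta \Upsilon_t$. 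The hypothesis $\epsilon_0 \le u \le 1-\epsilon_0$ guarantees that both $\mathfrak{a}(1-r)$ and $r(1-\mathfrak{a})$ are bounded below by a constant $c=c(\epsilon_0)>0$, while $\mathcal{S}_t$ is uniformly bounded by some $\mathcal{S}_0$. Rearranging the optimality relation yields $4c\,e^{|\alpha|} \le |g(t)| + 8\mathcal{S}_0(|\alpha|+|\beta|) + C$ and a symmetric estimate for $\beta$. Summing the two and applying the elementary inequality $|\alpha|+|\beta| \le 2\log(e^{|\alpha|}+e^{|\beta|})$ (valid as soon as $e^{|\alpha|}+e^{|\beta|}\ge e$), the logarithmic term is of strictly smaller order than $e^{|\alpha|}+e^{|\beta|}$ and can be absorbed into the left-hand side once this quantity exceeds a threshold depending only on $\epsilon_0$; this yields \eqref{06}, the bounded regime being trivial.

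For the identity \eqref{eq: I^2 sup int}, the inequality $4 I^{(2)}_{[0,T]}(\pi) \le \int_0^T \Phi_t\, dt$ is immediate: for any $\alpha,\beta\in C_0^1([0,T])$, the integrand in the definition of $I^{(2)}_{[0,T]}$ is pointwise dominated by $\Phi_t(u_t(-1),u_t(1),g(t),h(t))$ by the very definition of $\Phi_t$. For the reverse inequality, I would work with the measurable pointwise optimizers $(\alpha^*(t),\beta^*(t))$ of the variational problem \eqref{05} at $(r,s)=(u_t(-1),u_t(1))$, $(x,y)=(g(t),h(t))$; measurability follows from the continuous dependence of the unique optimizer on the parameters. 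By \eqref{06} and the integrability $g,h\in L^1([0,T])$, the functions $e^{|\alpha^*|}$ and $e^{|\beta^*|}$ lie in $L^1([0,T])$. The approximation by $C_0^1$ functions then proceeds in two stages. First, truncate to $\alpha^*_M(t) = \mathrm{sgn}(\alpha^*(t))\,\min(|\alpha^*(t)|,M)$ and similarly for $\beta^*_M$; one checks that $\Upsilon_t(\alpha^*_M,\beta^*_M) \le \Upsilon_t(\alpha^*,\beta^*)$, since each summand of $\Psi$ is monotone in $|\cdot|$ about zero and $(\alpha^*_M - \beta^*_M)^2 \le (\alpha^* - \beta^*)^2$. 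Second, for each fixed $M$ approximate the bounded measurable pair $(\alpha^*_M,\beta^*_M)$ by a sequence $(\alpha_{M,n},\beta_{M,n}) \in C_0^1\times C_0^1$ uniformly bounded by $M+1$ and converging a.e. Dominated convergence at each stage, together with a diagonal extraction, produces a sequence in $C_0^1\times C_0^1$ along which the objective converges to $\int_0^T \Phi_t\, dt$.

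The main obstacle is the domination needed for the truncation step: one must verify that $|\alpha^*||g| + |\beta^*||h| + \Upsilon_t(\alpha^*,\beta^*)$ is integrable on $[0,T]$ even though $g$ and $h$ are only a priori in $L^1$. I would combine the Young-type inequality $|\alpha|\,|g| \le e^{|\alpha|} + |g|\log^+|g|$ with the $L^1$ control on $e^{|\alpha^*|}$ provided by \eqref{06}, and use the $L^1\log L^1$ integrability of $P^{\mathbb{1}}$ and $P^X$ (and hence of $g$ and $h$) suggested by the remark following Proposition \ref{th: IPP}. Once this integrability is secured, both approximation steps reduce to routine applications of dominated convergence.
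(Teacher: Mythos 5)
Your treatment of \eqref{06} is essentially the paper's: first-order conditions for the strictly convex problem \eqref{05}, lower bounds on $\mathfrak{a}(1-r)$ and $r(1-\mathfrak{a})$ coming from $\epsilon_0\le u\le 1-\epsilon_0$, and absorption of the linear terms $|\alpha|+|\beta|$ into the exponentials; that part is fine, as is the easy inequality $4I^{(2)}_{[0,T]}(\pi)\le\int_0^T\Phi_t\,dt$.

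The gap is in the reverse inequality. Your truncation $\alpha^*_M=\mathrm{sgn}(\alpha^*)\min(|\alpha^*|,M)$ is passed to the limit by dominated convergence, and the dominating function you need, essentially $|\alpha^*|\,|g|+|\beta^*|\,|h|+\Upsilon_t(\alpha^*,\beta^*)$, is controlled only through the claim $g,h\in L^1\log L^1$, which you import from the unproved remark following Proposition \ref{th: IPP} (``one can show that $P\in L^1\log L^1$''). That remark is stated in the paper without proof, you do not prove it either, and Proposition \ref{th: IPP} itself only gives $P^{\mathbb{1}},P^X\in L^1([0,T])$; so the domination step, which you yourself identify as the crux, is not justified as written. (The other terms entering $g,h$ in \eqref{04} are harmless, being in $L^2([0,T])$, but the $P^{\mathbb{1}},P^X$ contributions are not.) A further small point: with your coordinatewise truncation the truncated integrand need not be nonnegative (the truncated point need not lie on the segment from the origin to the optimizer, so concavity gives no sign), so Fatou is not available as a substitute and dominated convergence really is forced on you.

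The paper avoids the issue entirely, and you could adopt the same device: truncate both optimizers to zero simultaneously, $\alpha_n=\alpha^*\,\mathbb{1}_{\{|\alpha^*|\le n\}}\mathbb{1}_{\{|\beta^*|\le n\}}$ and likewise $\beta_n$. Since $\Upsilon_t(r,s,0,0)=\Psi(r,s,0,0)=0$, the integrand $\alpha_n g+\beta_n h-\Upsilon_t(u_t(-1),u_t(1),\alpha_n,\beta_n)$ equals $\Phi_t(u_t(-1),u_t(1),g(t),h(t))\ge 0$ where both optimizers are bounded by $n$ and equals $0$ elsewhere; Fatou (indeed monotone convergence) then yields $\int_0^T\Phi_t\,dt\le\sup_{\alpha,\beta\text{ bounded}}\int_0^T\{\cdots\}\,dt$ with no integrability input beyond $g,h\in L^1$. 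The subsequent passage from bounded measurable pairs to $C^1_0([0,T])$ pairs by mollification, which you also use, only needs $g,h\in L^1$ and the boundedness of $u$, $\alpha$, $\beta$. With that single modification your argument closes; as it stands, it rests on an unestablished $L\log L$ bound.
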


\begin{proof} 
We first prove \eqref{06}.  Fix $t \in [0,T]$ such that $|g(t)|$,
$|h(t)|<+\infty$.  Recall the definition of $\alpha(t),
\beta(t)$. Clearly,
\begin{equation*}
U (\alpha) = g(t)-8(\alpha-\beta)\mathcal{S}_t \;\;\text{and}\;\;
V (\beta)  = h(t)+8(\alpha-\beta)\mathcal{S}_t\;,
\end{equation*}
where 
\begin{gather*}
U(\alpha) = 4{\mf a}(1-u_t(-1))(e^{\alpha}-1)
\,-\, 4u_t(-1)(1-{\mf a})(e^{-\alpha}-1) \;,
\\
V(\beta) = 4 {\mf b}(1-u_t(1))(e^{\beta}-1)
-4u_t(1)(1-{\mf b})(e^{-\beta}-1) \;.
\end{gather*} 
Suppose, to fix ideas, that $\alpha \leq 0 \leq \beta$. The other
cases are handled similarly. By definition of $U$,
\begin{align*}
4u_t(-1)\, (1-{\mf a})\, (e^{-\alpha}-1)
\,=\, 4{\mf a}\, (1-u_t(-1))\, (e^{\alpha}-1)-g(t)
+8\, (\alpha-\beta)\, \mathcal{S}_t  \mbox{ . }  
\end{align*}
Since $\epsilon_0 \leq u \leq 1-\epsilon_0$ and $\alpha \leq 0$, there
exists $C(\epsilon) <+ \infty$:
\begin{align*}
e^{-\alpha} \,\leq\, C(\epsilon_0)\,
\big \{ \, 1+|g(t)| + |\alpha| + |\beta| \, \big \} \mbox{ . } 
\end{align*}
A similar argument yields that:
\begin{align*}
e^{\beta} \,\leq\,  C(\epsilon_0)\, 
\big \{\, 1 + |h (t)| + |\alpha| + |\beta| \, \big \} \mbox{ . } 
\end{align*}
Since there exists a finite constants $C_0$ such that
$a \leq C_0 + (1/4) e^a$ for all $a \geq 0$, summing the previous
bounds yields that
\begin{align*}
e^{|\alpha|}+e^{|\beta|}=e^{-\alpha}+e^{\beta}
\leq C(\epsilon_0)\, \big \{\, 1+|g(t)|+|h(t)| \, \big \} 
\end{align*}
as claimed.

We turn to the proof of identity (\ref{eq: I^2 sup int}). It is clear
that
\begin{align*}
I_{[0,T]}^{(2)}(\pi)&\leq \frac{1}{4}\int_0^T 
\Phi_t\bigl(u_t(-1),u_t(1),g(t),h(t)\bigl)\, dt \mbox{ . }    
\end{align*}
To prove the reversed inequality, fix $t \in [0,T]$, and recall the
definition of the pair $(\alpha(t),\beta(t))$ introduced in the
statement of the proposition, and the one of $\Upsilon_t$ given in
\eqref{03}.

\smallskip \noindent {\it Claim A}: Denote by $B([0,T])$ the subset of
bounded functions. Then,
\begin{align*}
& \int_0^T \Phi_t\bigl(u_t(-1),u_t(1),g(t),h(t)\bigl)\, dt \\
&\quad 
\leq \underset{\alpha, \beta\in B([0,T])}{\mbox { sup }} \int_0^T
\Bigl \{ \alpha(t) g(t) +\beta(t) h(t)
-\Upsilon_t\bigl(u_t(-1),u_t(1),\alpha(t),\beta(t)\bigl)\Bigl \} \, dt
\mbox{ . } 
\end{align*}
\smallskip

Let $(\alpha_n(t),\beta_n(t))$ be given by
\begin{align*}
\alpha_n(t)=\alpha(t)\, \mathbb{1}_{\{|\alpha(t)|\leq n\}}
\mathbb{1}_{\{|\beta(t)|\leq n\}} \;,\quad
\beta_n(t)=\beta(t) \, \mathbb{1}_{\{|\alpha(t)|\leq n\}}
\mathbb{1}_{\{|\beta(t)|\leq n\}} \mbox{ . }
\end{align*}
By definition, $(\alpha_n,\beta_n)$ belong to $B([0,T])$.  As
$\alpha(\cdot)$, $\beta(\cdot)$ are finite almost surely,
$\lim_n\alpha_n(t)=\alpha(t)$ and $\lim_n \beta_n(t)=\beta(t)$ almost
surely.  Moreover:
\begin{align*}
\alpha_n(t)g(t) + \beta_n(t)h(t)  -
\Upsilon_t\bigl(u_t(-1),u_t(1),\alpha_n(t),\beta_n(t)\bigl) \geq 0
\end{align*}
because this expression is either equal to
$- \Psi\bigl(u_t(-1),u_t(1),0,0\bigl) \ge 0$ or it is equal to
$\Phi_t\bigl(u_t(-1),u_t(1),g(t),h(t)\bigl) \geq 0$.  Therefore, by
Fatou's Lemma,
\begin{align*}
&\int_0^T 
\Phi_t\bigl(u_t(-1),u_t(1),g(t),h(t)\bigl)dt\\
= &\int_0^T\underset{n \to + \infty}{\mbox{ lim inf }} \Bigl
\{\alpha_n(t) g(t) +\beta_n(t)
h(t)-\Upsilon_t\bigl(u_t(-1),u_t(1),\alpha_n(t),\beta_n(t)\bigl) \Bigl
\} \, dt \\
\leq &\underset{n \to + \infty}{\mbox{ lim inf }}\int_0^T \Bigl
\{\alpha_n(t) g(t) +\beta_n(t)
h(t)-\Upsilon_t\bigl(u_t(-1),u_t(1),\alpha_n(t),\beta_n(t)\bigl) \Bigl
\} \, dt\;.
\end{align*}
Since $(\alpha_n,\beta_n) \in B([0,T])^2$, the previous expression is
bounded by:
\begin{align}\label{eq : f1}
\underset{\alpha, \beta\in B([0,T])}{\mbox { sup }} \int_0^T \Bigl \{
\alpha(t) g(t) +\beta(t)
h(t)-\Upsilon_t\bigl(u_t(-1),u_t(1),\alpha(t),\beta(t)\bigl)\Bigl \}
\, dt \mbox{ , }
\end{align}
as claimed.

Fix $\epsilon>0$ and $(\alpha,\beta)\in B([0,T])^2$. Let
$\varphi\colon \mathbb{R} \to [0,+ \infty[$ be a smooth function such
that ${\rm supp }\, \varphi \subset[-1,1]$,
$\int_{-1}^1\varphi(x)dx=1$ and $\varphi(-x)=\varphi(x)$ for every
$x \in \mathbb{R}$. Let
$\varphi_{\epsilon}(x)=\frac{1}{\epsilon}\varphi(\frac{x}{\epsilon})$,
$\epsilon>0$. Extend the definition of $\alpha$, $\beta$, setting
$\alpha(t)=\beta(t) =0$ for $t<0$, $t>T$. Let
$\alpha_{\epsilon}=\alpha*\psi_{\epsilon}$ and
$\beta_{\epsilon}=\beta*\psi_{\epsilon}$. Since $g,h \in L^1([0,T])$
and $\alpha$, $\beta$, $u$ are bounded:
\begin{align*}
& \underset{\epsilon \to 0}{\lim}
\int_0^T \Big\{\,
\alpha_{\epsilon}(t)g(t)+\beta_{\epsilon}(t)h(t)
-\Upsilon_t\bigl(u_t(-1),u_t(1),\alpha_{\epsilon}(t),
\beta_{\epsilon}(t)\bigl) \,\Big\}\, dt \\
& \quad=\,
\int_0^T\Bigl \{\alpha(t) g(t)+\beta(t) h(t)
-\Upsilon_t\bigl(u_t(-1),u_t(1),\alpha(t),\beta(t)\bigl)\Bigl \}
\, dt   \mbox{ . }
\end{align*}
Since $\alpha_{\epsilon}$ and $\beta_{\epsilon}$ are smooth functions,
(\ref{eq : f1}) is equal to
\begin{align*}
\underset{\alpha,\beta\in C^{\infty}([0,T])}{\mbox{ sup }} 
\int_0^T\Bigl
\{\alpha(t)g(t)+\beta(t)h(t)-\Upsilon_t
\bigl(u_t(-1),u_t(1),\alpha(t),\beta(t)\bigl)
\Bigl \}  \, dt\mbox{ . }
\end{align*}
It remains to approximate smooth functions $\alpha$ and $\beta$ in
$C^{\infty}$ by functions which vanish at the boundary. This is
standard and left to the reader. Details can be found in Theorem 3.7.8 in \cite{these}.
\end{proof}

\begin{remark}
Proposition \ref{p03} can be proved without the condition
$\epsilon_0<u<1-\epsilon_0$. We refer to Section 3.7.2 in \cite{these}.
\end{remark}

The proof of the next result is similar to the one presented in
Section 4 of \cite{fgln}.

\begin{prop}
\label{p04}
Fix $T>0$ and $\pi(t,dx)=u(t,x)dx$ such that $I_{[0,T]}(\pi)<+\infty$.
Assume that there exists $\epsilon_0>0$ such that
$\epsilon_0 \le u(t,x) \le 1-\epsilon_0$ for all $(t,x)$ in
$[0,T] \times [-1,1]$ and that $u(\cdot,x) \in C^{\infty}([0,T])$ for
all $x \in [-1,1]$. Then,
$I_{[0,T]}(\pi)=I_{[0,T]}^{(a)}(\pi)+I_{[0,T]}^{(b)}(\pi)$, where
\begin{gather*}
I_{[0,T]}^{(a)}(\pi)
= \frac{1}{4}\int_0^T \left \{ \|P_t+D(u_t)\nabla
u_t\|_{\sigma(u_t)^{-1}}^2-R_t \right \} \, dt \\
I_{[0,T]}^{(b)}(\pi)
= \frac{1}{4} \,
\int_0^T \Phi\bigl(u_t(-1),u_t(1),a(t),b(t)\bigl) \, dt
\end{gather*}
$P$ is defined in (\ref{eq: def P}), and
\begin{gather*}
\Xi(t,x)= \frac{1}{\<\sigma(u_t)^{-1}\>}\,
\int_{-1}^x \frac{1}{\sigma(u_t(y))}\, dy \;, \quad 
R_t=\left \<\frac{D(u_t)\nabla u_t}{\sigma(u_t)}\right \>^2
\mathcal{S}_t \mbox{ , }  \\
\vphantom{\Big\{}
a(t) \,=\, 4\, \Big\{\, \<\partial_t u_t, (1-\Xi(t))\>
-\<D(u_t)\nabla u_t, \nabla \Xi_t\> + u_t(-1) - \mf a \,\Big\} \;, \\
\vphantom{\Big\{}
b(t) \,=\, 4\, \Big\{\,
\<\partial_t u_t, \Xi(t)\>+\<D(u_t)\nabla u_t, \nabla \Xi_t\>
+ u_t(1) - \mf b \,\Big\}  \;.
\end{gather*}
\end{prop}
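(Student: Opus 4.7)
The plan is to deduce this refined decomposition from Propositions~\ref{p02} and~\ref{p03}, which, under the assumption $\epsilon_0\le u\le 1-\epsilon_0$, already give $I_{[0,T]}(\pi) = I^{(1)}_{[0,T]}(\pi) + I^{(2)}_{[0,T]}(\pi)$. It is therefore enough to check $I^{(1)}_{[0,T]} = I^{(a)}_{[0,T]}$ and $I^{(2)}_{[0,T]} = I^{(b)}_{[0,T]}$; the extra hypothesis that $u(\cdot,x)\in C^{\infty}([0,T])$ for every $x$ will be used only in the second identification to compute $P_t$ in closed form.

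For the bulk part, since $M_t = P_t + D(u_t)\nabla u_t$ and $\langle P_t/\sigma(u_t)\rangle = 0$ by the normalization fixed just before \eqref{04}, one has $\langle M_t,M_t\rangle_{\sigma(u_t)^{-1}} = \|P_t + D(u_t)\nabla u_t\|^2_{\sigma(u_t)^{-1}}$ and $\langle M_t\sigma(u_t)^{-1}\rangle = \langle D(u_t)\nabla u_t/\sigma(u_t)\rangle$, so the correction term in $I^{(1)}_{[0,T]}$ equals $R_t$ and the identity $I^{(1)}_{[0,T]} = I^{(a)}_{[0,T]}$ is immediate.

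For the boundary part, the plan is to show $g(t) = a(t)$ and $h(t) = b(t)$, which suffices since $I^{(2)}_{[0,T]}$ and $I^{(b)}_{[0,T]}$ both integrate $\tfrac14 \Phi_t(u_t(-1),u_t(1),\cdot,\cdot)$. First, by the smoothness of $u$ in time, Proposition~\ref{th: IPP} applied with $H\in C^1_0([0,T])$, combined with an integration by parts in $t$, identifies the functions introduced after \eqref{04} as $P^g(t) = \langle \partial_t u_t, g\rangle$; in particular $P^{\mathbb 1}(t) = \langle\partial_t u_t, \mathbb 1\rangle$ and $P^X(t) = \langle \partial_t u_t, X\rangle$. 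Second, starting from the defining identity \eqref{eq: def P} for $P$ and using the smoothness assumption together with integration by parts in $t$ and in $x$, one gets $\partial_x P_t = -\partial_t u_t$ in the sense of distributions, hence
\begin{equation*}
P_t(x) \;=\; -\int_{-1}^x \partial_t u_t(y)\,dy \,+\, c(t),
\end{equation*}
with $c(t)$ pinned down by the normalization. Imposing $\langle P_t/\sigma(u_t)\rangle=0$ and using Fubini together with the identity $\int_y^1 \sigma(u_t(x))^{-1}\,dx = (1-\Xi_t(y))/\mathcal{S}_t$, which is a direct consequence of the definition of $\Xi_t$, yields $c(t)=\langle \partial_t u_t, \mathbb 1-\Xi_t\rangle$. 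Integrating the formula for $P_t$ in $x$ then gives $\langle P_t\rangle = -(P^{\mathbb 1}-P^X) + 2\langle \partial_t u_t, \mathbb 1-\Xi_t\rangle$, while $\nabla \Xi_t = \mathcal{S}_t/\sigma(u_t)$ gives $\mathcal{S}_t \langle M_t\sigma(u_t)^{-1}\rangle = \langle D(u_t)\nabla u_t, \nabla \Xi_t\rangle$. Substituting these expressions into the formulas \eqref{04} for $g(t)$ and $h(t)$, the contributions of $P^{\mathbb 1}$, $P^X$ and $\langle D(u_t)\nabla u_t\rangle$ cancel and one is left with $g(t) = a(t)$ and $h(t) = b(t)$.

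The main obstacle is precisely the explicit identification of $P_t$: the Fubini step that fixes $c(t)$, and the subsequent bookkeeping of signs when substituting $\langle P_t\rangle$ and $\langle M_t \sigma(u_t)^{-1}\rangle\mathcal S_t$ back into $g$ and $h$. Every other ingredient is either directly furnished by Propositions~\ref{p02}, \ref{p03} and \ref{th: IPP}, or follows by elementary algebra.
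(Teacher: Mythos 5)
Your proof is correct, and it takes a somewhat different (and more self-contained) route than the paper. The paper proves Proposition \ref{p04} only by reference to the direct computation of Section 4 of \cite{fgln}, which redoes the interior/boundary splitting of the test function for time-smooth paths; you instead deduce it from the already established Propositions \ref{p02} and \ref{p03}, so that the whole proof reduces to the identifications $I^{(1)}_{[0,T]}=I^{(a)}_{[0,T]}$ and $g=a$, $h=b$. These identifications check out: with the normalization $\<P_t/\sigma(u_t)\>=0$ fixed after \eqref{eq: def P} (which is indeed the convention under which the formula for $I^{(a)}_{[0,T]}$ is stated, since that formula is not invariant under $P_t\mapsto P_t+c(t)$), one has $\<M_t\sigma(u_t)^{-1}\>=\<D(u_t)\nabla u_t\,\sigma(u_t)^{-1}\>$, hence $\<M_t\sigma(u_t)^{-1}\>^2\mc S_t=R_t$; and your computation $P_t(x)=-\int_{-1}^x\partial_t u_t\,dy+\<\partial_t u_t,\mathbb 1-\Xi_t\>$, $\<P_t\>=-(P^{\mathbb 1}-P^X)+2\<\partial_t u_t,\mathbb 1-\Xi_t\>$, $\mc S_t\<M_t\sigma(u_t)^{-1}\>=\<D(u_t)\nabla u_t,\nabla\Xi_t\>$, substituted into \eqref{04}, does yield $g(t)=a(t)$, $h(t)=b(t)$ (I verified the cancellations and signs). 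What your route buys is that it avoids repeating the \cite{fgln} argument and makes transparent that \ref{p04} is just the smooth-path specialization of the decomposition \ref{p02}--\ref{p03}; what the paper's reference buys is independence from the normalization convention and a formula stated directly in terms of $\partial_t u$. Two minor points you should make explicit: the hypothesis of \ref{p04} is only pointwise-in-$x$ smoothness of $t\mapsto u(t,x)$, so the steps where you differentiate under the integral (to get $P^g_t=\<\partial_t u_t,g\>$), pass to the distributional identity $\partial_x P=-\partial_t u$, and apply Fubini require $\partial_t u$ to be jointly measurable and integrable on $[0,T]\times[-1,1]$ (harmless in the intended application, where $u$ is jointly smooth, but worth a sentence); and the uniqueness part of Proposition \ref{th: IPP} (that $P^g$ is determined a.e.) should be invoked when identifying $P^{\mathbb 1}$, $P^X$.
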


\section{$I_{[0,T]}(.|\gamma)$-density}
\label{sec6}

Fix a profile $\gamma\colon [-1,1] \to [0,1]$ satisfying the
hypotheses of Theorem \ref{t02}. The main result of this section
states that any trajectory $\pi \in D([0,T],\mathcal{M})$ with finite
rate function can be approximated by a sequence of smooth trajectories
$(\pi^n)_{n \in \mathbb{N}}$ such that
$ I_{[0,T]}(\pi^n|\gamma) \to I_{[0,T]}(\pi|\gamma)$.

Denote by $\rho$ the weak solution of the hydrodynamic equation
(\ref{10}) with initial condition $\gamma$. Recall from Definition
\ref{d04} that $\Pi_\gamma$ represents the set of paths
$\pi(t,dx) = u(t,x)dx$ in $D([0,T], \mathcal{M})$ such that:
\begin{itemize}
\item[(a)] There exist $\delta>0$ such that $u_t=\rho_t$
on $[0,\delta]$;

\item[(b)]  $u \in C^{\infty}([\delta,T] \times [-1,1])$ and
$u \in C([0,T] \times [-1,1])$

\item[(c)] For each $\delta'>0$, there exists $\epsilon>0$ such that
$\epsilon \le  u(t,x) \le 1-\epsilon$ for all $(t,x) \in [\delta',T]
\times [-1,1]$.
\end{itemize}

The main result of this section reads as follows. 

\begin{prop}
\label{p05}
Let $\gamma$ be a density profile satisfying the hypotheses of Theorem
\ref{t02}. Then, the set $\Pi_\gamma$ is $I_{[0,T]}(.|\gamma)$-dense.
\end{prop}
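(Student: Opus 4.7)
The plan is to show $I_{[0,T]}(\cdot|\gamma)$-density through a chain of successively stronger approximations, each producing a trajectory that satisfies one more of the requirements in Definition \ref{d04}. Fix $\pi(t,dx) = u(t,x)\, dx$ with $I_{[0,T]}(\pi|\gamma) < \infty$ and let $\rho$ denote the unique weak solution of \eqref{10} with initial condition $\gamma$.

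\textbf{Step 1 (initial hydrodynamic segment).} For small $\mathfrak{t} > 0$, construct $\pi^{\mathfrak{t}}$ by setting $u^{\mathfrak{t}}_t = \rho_t$ on $[0, \mathfrak{t}]$ and letting $u^{\mathfrak{t}}$ coincide with a suitably time-shifted version of $u$ on $[\mathfrak{t}, T]$, glued continuously. Since $I_{[0,\mathfrak{t}]}(\rho|\gamma) = 0$ by Corollary \ref{cor: I hydro nul} and $I_{[0,T]} = I_{[0,\mathfrak{t}]} + I_{[\mathfrak{t},T]}$ by Corollary \ref{cor: separtion I}, weak continuity in time (Lemma \ref{l04}) yields $\lim_{\mathfrak{t} \to 0} I_{[0,T]}(\pi^{\mathfrak{t}}|\gamma) = I_{[0,T]}(\pi|\gamma)$.

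\textbf{Step 2 (bounding away from $0$ and $1$).} For $\epsilon \in (0,1)$, set $u^\epsilon = (1-\epsilon) u^{\mathfrak{t}} + \epsilon\, \rho$. By Proposition \ref{p07} and Remark \ref{rm3}, $\rho$ is bounded away from $0$ and $1$ on every $[\delta, T] \times [-1,1]$, so the same holds for $u^\epsilon$ on $[\mathfrak{t}, T] \times [-1,1]$. The inequality $I_{[0,T]}(\pi|\gamma) \le \liminf_{\epsilon \to 0} I_{[0,T]}(\pi^\epsilon|\gamma)$ follows from lower semicontinuity (Theorem \ref{thm: semi cont de I}). The reverse inequality $\limsup_{\epsilon \to 0} I_{[0,T]}(\pi^\epsilon|\gamma) \le I_{[0,T]}(\pi|\gamma)$ is the crux: using the decomposition $I_{[0,T]} = I^{(1)}_{[0,T]} + I^{(2)}_{[0,T]}$ of Proposition \ref{p02}, the bulk contribution $I^{(1)}$ is controlled by convexity of $P \mapsto \|P\|^2_{\sigma(u)^{-1}}$ minus its correction term, together with the convergence $u^\epsilon \to u^{\mathfrak{t}}$ in $L^2$; the boundary contribution $I^{(2)}$ is analysed through Proposition \ref{p03}, where the explicit formula \eqref{eq: I^2 sup int} and the exponential control \eqref{06} on the optimizer $(\alpha, \beta)$ provide the needed uniform integrability. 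This is the direct substitute announced in the introduction for the Riesz-representation argument of \cite[Lemma 5.5]{blm}.

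\textbf{Step 3 (space-time mollification).} Having reduced to trajectories equal to $\rho$ on $[0, \mathfrak{t}]$ and bounded away from $0$ and $1$ on $[\mathfrak{t}, T]$, we mollify $u^\epsilon$ on $[\mathfrak{t}, T]$ via convolution with a smooth kernel in time and in space, extending $u^\epsilon$ near $x = \pm 1$ by reflection or through the boundary fluxes prescribed by \eqref{10}. Using Proposition \ref{p04}, which decomposes $I_{[0,T]}$ for trajectories of this class into an explicit bulk integral and an explicit boundary integral, together with continuity of $D$, $\sigma$ and $\Phi_t$ in their arguments and the standard $L^p$-continuity of convolutions, we verify that the rate function converges along the mollification. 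A diagonal extraction then produces a sequence in $\Pi_\gamma$ converging to $\pi$ with matching rate function, completing the proof.

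The main obstacle is the limsup inequality of Step 2. The non-convexity of $I_{[0,T]}$ induced by the non-linear diffusion $D$ prevents the immediate bound $I_{[0,T]}(u^\epsilon) \le (1-\epsilon) I_{[0,T]}(u^{\mathfrak{t}}) + \epsilon I_{[0,T]}(\rho)$, and it is precisely the bulk/boundary decomposition of Section \ref{sec5} together with the a priori exponential estimate \eqref{06} on the boundary optimizer that allows a direct term-by-term comparison.
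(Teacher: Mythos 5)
Your overall outline (a chain of approximations: first an initial hydrodynamic segment, then boundedness away from $0,1$, then smoothing) matches the paper's strategy, but three of the individual steps have genuine problems.

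\textbf{Step 2.} You invoke Propositions \ref{p02}--\ref{p03} to compare $I^{(1)}(\pi^\epsilon)+I^{(2)}(\pi^\epsilon)$ with $I^{(1)}(\pi^{\mf t})+I^{(2)}(\pi^{\mf t})$. But the explicit formula \eqref{eq: I^2 sup int} and the exponential bound \eqref{06} require the trajectory to be bounded away from $0$ and $1$, which is precisely the property this step is designed to create; $\pi^{\mf t}$ need not have it. Moreover, $I^{(1)}(\pi^\epsilon)$ is built from the function $P^\epsilon$ representing $\partial_t\pi^\epsilon$, not from the $P$ of $\pi^{\mf t}$, and ``convexity of $P\mapsto\|P\|^2_{\sigma(u)^{-1}}$'' is inapplicable as stated because the weight $\sigma(u)$ also changes when $u$ is replaced by $u^\epsilon$. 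The paper handles this step without any decomposition at all: it works directly with the variational formula, writing $\hat J_H(\pi^\epsilon)\le\epsilon\{\ldots\rho\ldots\}+(1-\epsilon)\{\ldots u\ldots\}$ using concavity of $\chi$ and linearity of $L_0$, $B$, then absorbs the error from $D(u^\epsilon)$ versus $D(u)$ and $D(\rho)$ via Young's inequality and a rescaling $H\mapsto aG$. That elementary estimate, not the decomposition, is the ``much simpler and direct proof'' announced in the introduction. The decomposition of Section \ref{sec5} is used later, in the time-mollification step, where both the path and its mollification are already uniformly bounded away from $0,1$.

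\textbf{Missing step.} Between Step 2 and Step 3 the paper inserts the class $\mc F_2$: one replaces the path by one that is constant in time on a small interval $[\delta_1,\delta_1+\delta_2]$ right after the hydrodynamic segment. This plateau is what allows the subsequent one-sided time-convolution to be applied on $[\delta,T]$ without disturbing the exact equality $u_t=\rho_t$ on $[0,\delta]$; without it the convolution would mix $\rho$ and $u$ across the junction and the approximant would leave $\mc F_0$. Your Step 3 has no room for the mollifier to act.

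\textbf{Step 3 (spatial smoothing).} You propose to convolve in space after extending $u$ across $\pm1$ by reflection or via the boundary fluxes. That generically changes the boundary values $u(t,\pm1)$, which enter $I_{[0,T]}$ through $B(u_t,H_t)$ in \eqref{11} and through $I^{(2)}$; controlling that perturbation is exactly the hard point. The paper instead sets $u^n_t=w_t+P^{(D)}_{\kappa_n(t)}[u_t-w_t]$ with $w_t$ the affine interpolant of $u_t(\pm1)$: the Dirichlet semigroup kills $u_t-w_t$ at $x=\pm1$, so the boundary values of $u^n_t$ coincide exactly with those of $u_t$ for every $n$, and the boundary contribution to the rate functional is unchanged. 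Convolution does not have this property, and your argument does not address the resulting boundary error.

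Steps 1 and the high-level structure are fine, and your stated motivation (non-convexity of $I_{[0,T]}$, need for a bulk/boundary separation) is accurate; the gaps are the circular use of Propositions \ref{p02}--\ref{p03} in Step 2, the omitted plateau step, and a spatial smoothing that does not preserve the boundary data.
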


The proof is divided
in several steps.

\subsection*{ $\mathcal{F}_1$ density}

Let $\mathcal{F}_0$ be the set of all paths $\pi(t,dx)=u(t,x)dx$ in
$D([0,T], \mathcal{M}^0)$ with finite energy and for which there
exists $\delta >0$ for which $u_t=\rho_t$ for $0 \leq t \leq \delta $.
The proof of the next result is similar to one of Lemma 5.3 in
\cite{fgln}.

\begin{lem}
\label{lc01}
The set $\mathcal{F}_0$ is $I_{[0,T]}(.|\gamma)$-dense.
\end{lem}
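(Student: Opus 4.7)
The approach is to approximate a general $\pi(t,dx)=u(t,x)\,dx$ with $I_{[0,T]}(\pi\,|\,\gamma)<\infty$ by a path that coincides with the hydrodynamic solution $\rho$ on a vanishing initial interval, and interpolates convexly between $\rho$ and $u$ on a short transition window. The finiteness of the rate function already forces $\pi_0(dx)=\gamma(x)\,dx$, continuity of $t\mapsto \pi_t$ (Lemma \ref{l04}), membership in $D([0,T],\mc M^0)$, and finite energy via Lemma \ref{l05}; I will use all of these.

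The plan for the construction is as follows. Fix a smooth cut-off $\theta_\delta\colon [0,T]\to [0,1]$ with $\theta_\delta\equiv 0$ on $[0,\delta]$ and $\theta_\delta\equiv 1$ on $[2\delta,T]$, and define
\[
u^\delta(t,x)\;=\;\bigl(1-\theta_\delta(t)\bigr)\,\rho(t,x)\;+\;\theta_\delta(t)\,u(t,x)\;,\qquad \pi^\delta(t,dx)=u^\delta(t,x)\,dx\;.
\]
Since $\rho,u$ take values in $[0,1]$, so does $u^\delta$; convexity of $\mc Q$ (Proposition \ref{p06}) and Corollary \ref{cor: hydro ener fini} give finite energy; by construction $u^\delta\equiv\rho$ on $[0,\delta]$, so $\pi^\delta\in\mc F_0$. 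Skorohod convergence $\pi^\delta\to\pi$ follows because $\pi^\delta_t=\pi_t$ for $t\ge 2\delta$, while on $[0,2\delta]$ both paths are uniformly bounded and $\pi_t\to\gamma=\rho_0$ by Lemma \ref{l04}.

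The convergence of the rate function splits in the usual way. Lower semicontinuity (Theorem \ref{thm: semi cont de I}) gives $\liminf_{\delta\to 0}I_{[0,T]}(\pi^\delta\,|\,\gamma)\ge I_{[0,T]}(\pi\,|\,\gamma)$. For the upper bound, Corollary \ref{cor: separtion I} and Corollary \ref{cor: I hydro nul} yield
\[
I_{[0,T]}(\pi^\delta)\;=\;I_{[0,\delta]}(\rho)\;+\;I_{[\delta,2\delta]}(\pi^\delta)\;+\;I_{[2\delta,T]}(\pi)\;=\;I_{[\delta,2\delta]}(\pi^\delta)\;+\;I_{[2\delta,T]}(\pi)\;,
\]
and $I_{[2\delta,T]}(\pi)=I_{[0,T]}(\pi)-I_{[0,2\delta]}(\pi)\to I_{[0,T]}(\pi)$, since $\delta\mapsto I_{[0,2\delta]}(\pi)$ is monotone and must tend to $0$ by the integral representation of Proposition \ref{p02} (each of $I^{(1)},I^{(2)}$ is an integral over the time interval of an integrable density). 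Hence everything reduces to showing $I_{[\delta,2\delta]}(\pi^\delta)\to 0$.

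This last estimate is the main obstacle and is where the interpolation must be handled carefully, because the naive bound contains the factor $\theta_\delta'=O(1/\delta)$ which does not shrink with the window. I would bound $I_{[\delta,2\delta]}(\pi^\delta)$ directly from the variational formula $\sup_H J_H$: for any test function $H$ one has
\[
J_H(\pi^\delta)\;\le\;L_{[\delta,2\delta]}(H)\;+\;\Bigl\|\sqrt{\sigma(u^\delta)}\,\nabla H\Bigr\|_{L^2}\,\Bigl\|D(u^\delta)\nabla u^\delta/\sqrt{\sigma(u^\delta)}\Bigr\|_{L^2}\;-\;\int\int\sigma(u^\delta)(\nabla H)^2\;-\;\int B(u^\delta_t,H_t)\,dt\;,
\]
on $[\delta,2\delta]$; the gradient and boundary terms are bounded using Lemma \ref{l05} and the boundedness of $u^\delta$, and shrink with the measure of the time window, while the critical term $L_{[\delta,2\delta]}(H)$ is controlled via the absolute continuity result Proposition \ref{th: IPP}, which allows us to integrate by parts $\int_\delta^{2\delta}\langle \pi^\delta_t,\partial_t H_t\rangle\,dt$ and absorb the time derivative $\theta_\delta'(t)(u_t-\rho_t)$ into an $L^1$ estimate that vanishes with $\delta$ because $\|u_t-\rho_t\|_\infty\le 2$ and $\int_\delta^{2\delta}\theta_\delta'\,dt=1$ while $H_t$ is a test function one can localize. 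A suitable optimization over $H$, combined with the explicit decomposition in Propositions \ref{p02} and \ref{p04}, then yields $I_{[\delta,2\delta]}(\pi^\delta)=o_\delta(1)$, completing the proof.
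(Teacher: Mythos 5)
Your skeleton (lower semicontinuity for one inequality, splitting the cost by Corollary \ref{cor: separtion I}, reducing everything to the transition window) is the standard one, but the crux of your construction --- that $I_{[\delta,2\delta]}(\pi^\delta)\to 0$ for $u^\delta=(1-\theta_\delta)\rho+\theta_\delta u$ --- is not established, and the mechanism you sketch cannot close it. The time-dependent weight produces the drift term $\theta_\delta'(t)\,(u_t-\rho_t)$, of order $1/\delta$ on a window of length $\delta$. In the variational formula the only quantities available to absorb the linear part $L_{[\delta,2\delta]}(H)$ are the quadratic penalty $\int\!\!\int\sigma(u^\delta)(\nabla H)^2$ and the boundary penalty $\int B(u^\delta_t,H_t)\,dt$. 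The estimate you propose, namely $\bigl|\int_\delta^{2\delta}\theta_\delta'\,\langle u_t-\rho_t,H_t\rangle\,dt\bigr|\le \epsilon(\delta)\,\sup_t\|H_t\|_\infty$ with $\epsilon(\delta)\to 0$ (which is all that Lemma \ref{l04}, Proposition \ref{th: IPP} and $\|u-\rho\|_\infty\le 2$ give, with no rate in $\delta$), is useless against the supremum over $H$: take $H$ constant in space, so that the gradient penalty vanishes identically, and observe that the exponential boundary penalty degenerates whenever $u^\delta_t(\pm1)$ approaches $0$ or $1$ --- which cannot be excluded here, since pushing the path away from $0$ and $1$ is precisely the content of the \emph{later} step (Lemma \ref{lc02}). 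What would actually be needed is a quantitative modulus of continuity at $t=0$, of the type $\sup_{t\le 2\delta}\|u_t-\rho_t\|^2_{\mathcal{H}^{-1}(\sigma)}=o(\delta)$ (so that $\delta^{-2}\int_\delta^{2\delta}\|u_t-\rho_t\|^2_{-1,\sigma}\,dt\to0$), together with a matching control of the boundary/mass contribution; neither Lemma \ref{l04} nor Proposition \ref{th: IPP} provides any rate, and you do not derive one. (A smaller point: you also use interval versions of Propositions \ref{p02}--\ref{p03} to get $I_{[0,2\delta]}(\pi)\to0$; this is repairable but should be said.)

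The proof the paper points to (Lemma 5.3 of \cite{fgln}) avoids any $1/\delta$ factor by not deforming $u$ at all: one follows the hydrodynamic solution on $[0,\delta]$, returns to $\gamma$ through a short connecting piece whose cost is controlled by the energy of $\rho$ on $[0,\delta]$ plus a term of order $\delta$ (hence vanishes with $\delta$ by Corollary \ref{cor: hydro ener fini}), and then glues the \emph{time-shifted} original trajectory $t\mapsto\pi_{t-2\delta}$, whose cost is exactly $I_{[0,T-2\delta]}(\pi)\le I_{[0,T]}(\pi)$; convergence $\pi^\delta\to\pi$ then follows from the uniform continuity of Lemma \ref{l04}. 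Note that the time-independent convex combination $(1-\epsilon)u+\epsilon\rho$ is the right tool in Lemma \ref{lc02} precisely because a constant weight produces no $\theta'$ term; your construction mixes the two steps and inherits the difficulty of both. Either switch to the time-shift construction or supply the missing quantitative $\mathcal{H}^{-1}(\sigma)$-continuity estimate near $t=0$; as written, the key step is a gap.
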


Let $\mathcal{F}_1$ be the set of all paths $\pi(t,dx)=u(t,x)dx$ in
$\mathcal{F}_0$ with the property that for every $\delta>0$ there
exists $\epsilon>0$ such that $\epsilon \le u(t,x) \leq 1-\epsilon$
for all $(t,x) \in [\delta,T] \times [-1,1]$.

\begin{lem}
\label{lc02}
The set $\mathcal{F}_1$ is $I_{[0,T]}(.|\gamma)$-dense.
\end{lem}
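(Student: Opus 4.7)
The plan is to set $u^{\epsilon} := (1-\epsilon)\, u + \epsilon\, \rho$ and $\pi^{\epsilon}(t,dx) := u^{\epsilon}(t,x)\, dx$, where $\rho$ is the weak solution of \eqref{10} with initial profile $\gamma$, and to verify both that $\pi^{\epsilon} \in \mathcal{F}_1$ and that $I_{[0,T]}(\pi^{\epsilon}|\gamma) \to I_{[0,T]}(\pi|\gamma)$ as $\epsilon \to 0$. For membership in $\mathcal{F}_1$: since $\pi \in \mathcal{F}_0$ there is $\delta > 0$ with $u \equiv \rho$ on $[0,\delta]$, so $u^{\epsilon} \equiv \rho$ there as well, so $\pi^{\epsilon} \in \mathcal{F}_0$. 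Finite energy of $\pi^{\epsilon}$ follows from convexity of $\mathcal{Q}$ (Proposition \ref{p06}) together with Corollary \ref{cor: hydro ener fini}. The bounded-away-from-$\{0,1\}$ condition uses the maximum principle Proposition \ref{p07}: by the hypothesis on $\gamma$, for each $\delta' > 0$ there is $c = c(\delta') > 0$ with $c \le \rho \le 1 - c$ on $[\delta', T] \times [-1,1]$, and combined with $0 \le u \le 1$ this gives $\epsilon c \le u^{\epsilon} \le 1 - \epsilon c$ on the same set.

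The convergence of the rate functionals splits into two inequalities. The lower bound $I_{[0,T]}(\pi|\gamma) \le \liminf_{\epsilon} I_{[0,T]}(\pi^{\epsilon}|\gamma)$ follows at once from lower semicontinuity (Theorem \ref{thm: semi cont de I}) and $\pi^{\epsilon} \to \pi$ in $D([0,T],\mathcal{M})$. The main obstacle is the reverse inequality, delicate precisely because $I_{[0,T]}$ is not convex: convexity would make the conclusion immediate from $I_{[0,T]}(\rho|\gamma) = 0$. To handle it I would first apply Corollaries \ref{cor: I hydro nul} and \ref{cor: separtion I}, together with $u^{\epsilon} = u = \rho$ on $[0, \delta]$, to reduce to proving $\limsup_{\epsilon} I_{[\delta,T]}(\pi^{\epsilon}) \le I_{[\delta,T]}(\pi)$. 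On $[\delta, T]$ the trajectory $u^{\epsilon}$ is bounded away from $\{0,1\}$, so the decomposition $I_{[\delta,T]} = I^{(1)} + I^{(2)}$ of Propositions \ref{p02} and \ref{p03} is available.

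I would then pass to the limit in each piece separately. For $I^{(2)}$, given in \eqref{eq: I^2 sup int} as an integral of $\Phi_t(u_t(-1), u_t(1), g(t), h(t))$, the boundary values $u^{\epsilon}(\cdot, \pm 1)$ and the functions $g^{\epsilon}, h^{\epsilon}$ defined by \eqref{04} converge pointwise a.e.\ to their $\epsilon = 0$ limits; continuity of $\Phi_t$, together with the equi-integrability supplied by the estimate \eqref{06} of Proposition \ref{p03}, yields $I^{(2)}(\pi^{\epsilon}) \to I^{(2)}(\pi)$ by dominated convergence. For $I^{(1)}$, linearity of $u \mapsto P$ in \eqref{eq: def P}, the identity $P^{\rho} = -D(\rho)\nabla\rho$ (valid because $\rho$ solves \eqref{10} classically on $[\delta, T]$), and the affinity of $D$ combine to give the algebraic identity
\begin{equation*}
M^{\epsilon} \;=\; (1-\epsilon)\, M \;-\; 2a\, \epsilon\, (1-\epsilon)\, (u-\rho)\, (\nabla u - \nabla \rho)\;.
\end{equation*}
The extra factor of $\epsilon$ in the error forces $M^{\epsilon} \to M$ in the weighted space implicit in $I^{(1)}(\pi) < \infty$; combined with $\sigma(u^{\epsilon}) \to \sigma(u)$ a.e., dominated convergence carries the integrand of $I^{(1)}$ to the limit.

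The delicate point, and the real heart of the argument, is controlling the integrand on the set $\{u \in \{0,1\}\}$, where $\sigma(u)^{-1}$ is singular. There $M = 0$ and $\nabla u = 0$ a.e., so the error in the identity above reduces to the single term $2 a\, \epsilon\, \rho\, \nabla \rho$, whose contribution to $\|M^{\epsilon}\|^2_{\sigma(u^{\epsilon})^{-1}}$ is $O(\epsilon)$ by virtue of Lemma \ref{l05} applied to $\rho$ together with $\sigma(u^{\epsilon})^{-1} \lesssim 1/(\epsilon c)$ on that set. This direct route sidesteps the negative Sobolev representation lemma of \cite[Lemma 4.8]{blm} that was used in the previous treatment of the non-convex case.
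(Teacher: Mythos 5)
Your construction of $u^{\epsilon}=(1-\epsilon)u+\epsilon\rho$, the verification that $\pi^{\epsilon}\in\mathcal{F}_1$ (convexity of $\mathcal{Q}$, Corollary \ref{cor: hydro ener fini}, Proposition \ref{p07}), and the lower-semicontinuity half of the convergence all coincide with the paper. The gap is in the heart of the matter, the inequality $\limsup_{\epsilon}I(\pi^{\epsilon})\le I(\pi)$, which you propose to prove through the decomposition $I=I^{(1)}+I^{(2)}$ of Propositions \ref{p02} and \ref{p03}. First, the explicit boundary formula \eqref{eq: I^2 sup int} and the optimizer bound \eqref{06} are established in the paper only under the hypothesis $\epsilon_0\le u\le 1-\epsilon_0$; the trajectory $\pi\in\mathcal{F}_0$ need not satisfy this --- being bounded away from $0$ and $1$ is precisely what the $\mathcal{F}_1$ step is supposed to produce --- so the right-hand side of your limit, $I^{(2)}(\pi)$ in the form you use it (with $g$, $h$, $\mathcal{S}_t$, $\langle M_t\sigma(u_t)^{-1}\rangle$), is not available, and may not even be well defined on the set of times where $\sigma(u_t)$ vanishes on a set of positive measure. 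Second, even for $\pi^{\epsilon}$ the bound \eqref{06} comes with a constant $C(\epsilon_0)$ where $\epsilon_0=\epsilon c$ degenerates as $\epsilon\to 0$; hence it does \emph{not} supply equi-integrability of $\Phi_t\bigl(u^{\epsilon}_t(-1),u^{\epsilon}_t(1),g^{\epsilon}(t),h^{\epsilon}(t)\bigr)$ uniformly in $\epsilon$, and the dominated-convergence step for $I^{(2)}$ does not go through as stated. The pointwise convergence $g^{\epsilon}\to g$, $h^{\epsilon}\to h$ and of the subtracted mean term in $I^{(1)}$ is equally problematic where $\sigma(u)$ vanishes ($\mathcal{S}_t=0$ while $\langle M_t\sigma(u_t)^{-1}\rangle$ is singular). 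Your algebraic identity for $M^{\epsilon}$ and the $O(\epsilon)$ control of the error on $\{u\in\{0,1\}\}$ are correct as far as they go, but they only treat the bulk piece; the boundary piece is where the argument breaks.

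The paper's proof avoids all of this and is genuinely simpler: it estimates $\hat J_H(\pi^{\epsilon})$ directly from the variational formula, for a fixed test function $H$, using concavity of $\chi$ and linearity of $L_0$ and $B$ in $u$ to split the expression into an $\epsilon$-part involving $\rho$ and a $(1-\epsilon)$-part involving $u$. The $\epsilon$-part is handled using that $\rho$ is a weak solution (it reduces to $-\epsilon\int_0^T B_0(\rho_t,H_t)\,dt\le 0$ plus a Young-inequality remainder of order $\epsilon\,\mathcal{Q}(\rho)$), and the $(1-\epsilon)$-part is bounded, after replacing $D(u^{\epsilon})$ by $D(u)$ at a cost $O(\epsilon)$ and rescaling $H=aG$ with $a=(1-\eta-C_0\epsilon)^{-1}$ via the inequality $e^{ax}-ax-1\ge a(e^x-x-1)$, by $a\,I_{[0,T]}(\pi)+C_0(u)\epsilon^2$, uniformly in $H$. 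This direct route --- advertised in the introduction as one of the novelties of the paper, precisely to avoid the $\mathcal{H}^{-1}$-representation machinery --- is what you should use here; the decomposition of Section \ref{sec5} is reserved in the paper for the $\mathcal{F}_3$-density step, where the trajectories are already uniformly bounded away from $0$ and $1$.
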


\begin{proof}
Fix $\pi(t,dx)=u(t,x)dx$ in $\mathcal{F}_0$. By Lemma \ref{lc01}, it
is enough to show that there exists a sequence
$(\pi^n)_{n \in \mathbb{N}}$ in $\mathcal{F}_1$ such that
$I_{[0,T]}(\pi^n|\gamma) \to I_{[0,T]}(\pi|\gamma)$ and
$\pi^n\to \pi$.

Let $\pi^{\epsilon}(t,dx)=u^{\epsilon}(t,x)dx$, where
$u^{\epsilon}=(1-\epsilon)u+\epsilon \rho$, $0<\epsilon<1$.  We claim
that $\pi^{\epsilon}$ belongs to $\mathcal{F}_1$. Clearly
$\pi^{\epsilon} \in D([0,T],\mathcal{M}^0)$. By the definition of
$\mathcal{F}_0$, by Corollary \ref{cor: hydro ener fini} and by
convexity of $\mathcal{Q}_{[0,T]}$ (Proposition \ref{p06}),
$\pi^{\epsilon}$ has finite energy because
\begin{align*}
\mathcal{Q}_{[0,T]}(u^{\epsilon})
\,\leq\, \epsilon\, \mathcal{Q}_{[0,T]}(\rho)
\,+\, (1-\epsilon)\, \mathcal{Q}_{[0,T]}(u) < + \infty \;,
\end{align*}
Since $\pi \in \mathcal{F}_0$, there exists $\delta>0$ such that
$u_t=\rho_t$ for every $ 0\leq t \leq \delta$. Hence, by construction
of $\pi^{\epsilon}$, $u^{\epsilon}_t=\rho_t$ for every
$ 0\leq t \leq \delta$. Moreover, by Proposition \ref{p07}, for each
$\delta_1>0$, there exist $\epsilon_1>0$ such that
$\epsilon_1 \leq \rho_t \leq 1- \epsilon_1$, for every
$\delta_1 \leq t \leq T$. Therefore, for $t$ in this interval,
\begin{align*}
\epsilon\epsilon_1 \leq (1- \epsilon)u_t
+\epsilon \rho_t \leq 1- \epsilon
+\epsilon(1- \epsilon_1)=1-\epsilon\epsilon_1 \;.
\end{align*}
This shows that $\pi^{\epsilon} $ belongs to $ \mathcal{F}_1$.

It's clear that $\pi^{\epsilon}$ converges to $\pi$ in
$D([0,T], \mathcal{M})$ as $\epsilon \to 0$.  To conclude the proof,
it remains to show that $I_{[0,T]}(\pi^{\epsilon}|\gamma)$ converges
to $I_{[0,T]}(\pi|\gamma)$. By Theorem \ref{thm: semi cont de I},
$I_{[0,T]}$ is lower semicontinuous, so that
$\liminf_{\epsilon \to 0} I_{[0,T]}(\pi^{\epsilon}|\gamma) \geq
I_{[0,T]}(\pi|\gamma)$. We turn to the reverse inequality,
$\limsup_{\epsilon \to 0} I_{[0,T]}(\pi^{\epsilon}) \leq
I_{[0,T]}(\pi)$.

Recall from \eqref{07} the definition of the functionals $L_0$, $L$.
To stress its dependence on $\pi(t,dx) = u(t,x)\, dx$, we denote them
below by $L_0(u,H)$, $L(u,H)$, respectively. Fix
$H \in C^{1,2}([0,T]\times [-1,1])$. Since $\chi$ is concave and
$L_0$, $B$ are linear in $u$, $\hat{J}_H(\pi^{\epsilon})$ is less than
or equal to
\begin{equation}
\label{eq: f2}
\begin{aligned}
& \epsilon \, L_0(\rho,H) \,+\,  (1-\epsilon) \, L_0(u,H)
- \epsilon\, \int_0^T B(\rho_t,H_t) dt 
- (1-\epsilon) \, \int_0^T B(u_t,H_t) dt  \\
&+ \epsilon\int_0^T \int_{-1}^1\Bigl \{ D(u^{\epsilon}_t) \nabla
\rho_t \nabla H_t - D(u^{\epsilon}_t) \chi(\rho_t) (\nabla
H_t)^2 \Bigl \} dx\, dt \\
&+(1-\epsilon)\int_0^T \int_{-1}^1 \Bigl \{D(u^{\epsilon}_t)\nabla
u_t \nabla H_t-D(u^{\epsilon}_t) \chi(u_t) (\nabla H_t)^2 \Bigl \}
dxdt \;.
\end{aligned}
\end{equation}

We first estimate the terms involving the solution of the hydrodynamic
equation. We claim that
\begin{equation}
\label{08}
\begin{aligned}
\underset{\epsilon \to 0}{\lim}\, 
& \underset{H \in C^{1,2}([0,T]\times [-1,1])} {\mbox{ sup }}
\Big \{ \epsilon \, L_{0}(\rho,H) \,-\,
\epsilon\, \int_0^T B(\rho_t,H_t) dt \\
& \qquad \quad + \, \epsilon\int_0^T \int_{-1}^1\Big[ \, D(u^{\epsilon}_t)
\nabla \rho_t \nabla H_t - D(u^{\epsilon}_t)
\chi(\rho_t) (\nabla H_t)^2 \Big] \,dx\, dt \, \Big \} \leq  0 \;.
\end{aligned}
\end{equation}
Since $\rho$ is a weak solution of hydrodynamic equation (\ref{10}),
and $\mf c-u= \mf c (1-u)-u(1-\mf c)$, the expression inside braces is
equal to
\begin{align*}
-\epsilon\int_0^T B_0(\rho_t,H_t) dt
\,+\, \epsilon\int_0^T \int_{-1}^1 \Big\{\, \big [\, (D(u^{\epsilon}_t)
- D(\rho_t)\,\big]
\nabla \rho_t \nabla H_t - D(u^{\epsilon}_t)\chi(\rho_t)
(\nabla H_t)^2 \Bigl \}  dx\, dt \;,
\end{align*}
where
\begin{align*}
B_0(u_t,H_t) \, & =\, u_t(1) \,[1-{\mf b}]\, \Lambda (- H_t(1))
\,+\, [1-u_t(1)]\, {\mf b}\, \Lambda (H_t(1))  \\
& =\, u_t(-1) \,[1-{\mf a}]\, \Lambda (- H_t(-1))
\,+\, [1-u_t(-1)]\, {\mf a}\, \Lambda (H_t(-1)) \;,
\end{align*}
and $\Lambda (x) = e^x-x-1$. Clearly, $B_0(u_t,H_t) \geq 0$, and the
first term of the penultimate displayed equation is negative. On the
other hand, by Young's inequality the second term is bounded above by
\begin{align*}
\epsilon\, \int_0^T \int_{-1}^1
\frac{ [\, D(u^{\epsilon}_t)-D(\rho_t)\,]^2
(\nabla \rho_t)^2}{4D(u^{\epsilon}_t)\chi(\rho_t)}
\,dx\, dt \;.
\end{align*}
Since the diffusion coefficient $D$ is bounded below by a strictly
positive constant and above by a finite constant, the previous
expression is less than or equal to
$C_0 \, \epsilon \, \mathcal{Q}_{[0,T]}(\rho)$. By Corollary \ref{cor:
hydro ener fini}. the solution of the hydrodynamic equation has finite
energy, which proves claim \eqref{08}.

We turn to the terms in (\ref{eq: f2}) which are multiplied by
$1-\epsilon$. Our goal is to replace $u^\epsilon$ by $u$ to get an
expression bounded by $(1-\epsilon) I_{[0,T]}(u)$. Rewrite
$D(u^{\epsilon}_t) \chi(u_t)$ as
$D(u_t) \chi(u_t) \times [D(u^{\epsilon}_t)/D(u_t)]$.  Since
$u^{\epsilon}=u+\epsilon(\rho-u)$, and $|\rho-u| \leq 1$, adding and
subtracting in the numerator $D(u)$ yields that
$D(u^{\epsilon}_t)/D(u_t) \ge 1 - C_0 \epsilon$ for some finite
constant $C_0$ because $D$ is bounded below by a strictly positive
constant. In conclusion, since $\chi D=\sigma$, the terms in (\ref{eq:
f2}) which are multiplied by $1-\epsilon$ are bounded by
\begin{equation*}
L_0(u,H) - \int_0^T B(u_t,H_t) dt  
+ \int_0^T \int_{-1}^1 \Bigl \{D(u^{\epsilon}_t)\nabla
u_t \nabla H_t- (1-C_0\epsilon) \sigma(u_t) (\nabla H_t)^2 \Bigl \}
dxdt \;.
\end{equation*}
Fix $\eta>0$ which converges to $0$ after $\epsilon$, and add and subtract
$D(u_t)\nabla u_t \nabla H_t$. By Young's inequality, and since
$|D(u^{\epsilon}_t)-D(u_t)| \le C_0 \epsilon$,
\begin{equation*}
\int_0^T \int_{-1}^1  \{\, D(u^{\epsilon}_t) - D(u_t)\}
\nabla u_t \nabla H_t \, dx\, dt
\;\le\; \eta \int_0^T \int_{-1}^1  \sigma(u_t) (\nabla H_t)^2
\, dx\, dt \,+\,
\frac{C_0 \epsilon^2}{\eta} \mc Q_{[0,T]}(u)\;.
\end{equation*}
As $u$ belongs to $\mc F_0$, it has finite energy, and the second term
on the right-hand side vanishes as $\epsilon\to 0$. Therefore, the
next to the last displayed equation is bounded by
\begin{equation*}
L(u,H) - \int_0^T B_0(u_t,H_t) dt  
\,-\, (1-\eta-C_0\epsilon)
\, \int_0^T \int_{-1}^1  \sigma(u_t) (\nabla H_t)^2 
dx\, dt \, +\, C_0(u) \epsilon^2 \;.
\end{equation*}
Mind that we changed $L_0$, $B$ to $L$, $B_0$, respectively.

Let $a= (1-\eta-C_0\epsilon)^{-1}>1$, $H= aG$. Since $e^{ax} - ax - 1
\ge a(e^x-x-1)$ for $x\in \bb R$, the previous expression is bounded by
\begin{equation*}
a \,\Big\{\, L(u,G) - \int_0^T B_0(u_t,G_t) dt  
\,-\,\, \int_0^T \int_{-1}^1  \sigma(u_t) (\nabla G_t)^2 
dx\, dt \,\Big\} \, +\, C_0(u) \epsilon^2 \;.
\end{equation*}
This expression is bounded by $a I_{[0,T]} (u) + C_0(u) \epsilon^2
$. It remains to let $\epsilon \to 0$ and then $\eta \to 0$ to
complete the proof of the lemma.
\end{proof}

\subsection*{ $\mathcal{F}_2$ density}

Let $\mathcal{F}_2$ be the set of all paths $\pi(t,dx)=u(t,x)dx$ in
$\mathcal{F}_1$ with the property that there exist
$\delta_1,\delta_2>0$ such that $u_t=\rho_t $ in $[0,\delta_1]$ and
$u$ is constant in $[\delta_1,\delta_1+\delta_2]$.

\begin{lem}
\label{lc03}
The set $\mathcal{F}_2$ is $I_{[0,T]}(.|\gamma)$-dense.
\end{lem}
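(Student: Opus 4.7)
By Lemma \ref{lc02}, it suffices to approximate an arbitrary $\pi(t,dx) = u(t,x)\,dx$ in $\mathcal{F}_1$ by trajectories in $\mathcal{F}_2$. The natural construction is to insert a short constant-in-time interval right after the initial hydrodynamic piece and then time-shift the rest of the trajectory. Let $\delta_1 > 0$ be such that $u_t = \rho_t$ on $[0, \delta_1]$, and for $0 < \delta_2 < T - \delta_1$ define
$$
u^{\delta_2}(t,x) \;=\;
\begin{cases}
u(t,x) & \text{if } t \in [0, \delta_1]\;, \\
\rho(\delta_1, x) & \text{if } t \in [\delta_1, \delta_1 + \delta_2]\;, \\
u(t - \delta_2, x) & \text{if } t \in [\delta_1 + \delta_2, T]\;,
\end{cases}
$$
and $\pi^{\delta_2}(t,dx) = u^{\delta_2}(t,x)\,dx$. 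Since $u_{\delta_1} = \rho_{\delta_1}$, the pieces match. By weak continuity (Lemma \ref{l04}), $\pi^{\delta_2} \to \pi$ in $D([0,T], \mathcal{M})$ as $\delta_2 \to 0$. The membership $\pi^{\delta_2} \in \mathcal{F}_2$ is routine: the constancy on $[\delta_1, \delta_1 + \delta_2]$ is built in, finite energy is inherited from $\pi$ and $\rho$ by concatenation and convexity of $\mathcal{Q}$, and the uniform bound away from $\{0,1\}$ on $[\delta',T] \times [-1,1]$ comes from $\pi \in \mathcal{F}_1$ together with Proposition \ref{p07} applied to $\rho_{\delta_1}$.

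The lower bound $\liminf_{\delta_2 \to 0} I_{[0,T]}(\pi^{\delta_2}|\gamma) \geq I_{[0,T]}(\pi|\gamma)$ is immediate from lower semicontinuity (Theorem \ref{thm: semi cont de I}). For the matching upper bound I would split the cost via Corollary \ref{cor: separtion I}:
$$
I_{[0,T]}(\pi^{\delta_2}|\gamma) \;=\; I_{[0, \delta_1]}(\pi^{\delta_2}|\gamma) \,+\, I_{[\delta_1, \delta_1 + \delta_2]}(\pi^{\delta_2}) \,+\, I_{[\delta_1 + \delta_2, T]}(\pi^{\delta_2})\;.
$$
The first term vanishes by Corollary \ref{cor: I hydro nul}. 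For the middle term, a direct computation shows $L_{[\delta_1, \delta_1 + \delta_2]}(H) = 0$ on a constant-in-time path; pointwise maximization of the bulk integrand (using $\<A,\nabla H\> - \<\sigma(\rho_{\delta_1})(\nabla H)^2\> \leq \tfrac14\<A^2/\sigma(\rho_{\delta_1})\>$) and of the boundary term, valid since $\rho_{\delta_1}$ is smooth and bounded away from $\{0,1\}$, yields a uniform-in-$t$ constant $C(\rho_{\delta_1}) < \infty$ with $\hat J_H \leq \delta_2 \cdot C(\rho_{\delta_1})$, hence the middle term is $O(\delta_2)$. The third term equals $I_{[\delta_1, T - \delta_2]}(\pi)$ by a straightforward change of variables in time, which leaves all terms in $\hat J_H$ invariant.

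The main technical obstacle is proving $\lim_{\delta_2 \to 0} I_{[\delta_1, T - \delta_2]}(\pi) = I_{[\delta_1, T]}(\pi)$; combined with $I_{[0,\delta_1]}(\pi|\gamma) = 0$ this gives the desired limit $I_{[0,T]}(\pi|\gamma)$. By Corollary \ref{cor: separtion I} this is equivalent to $I_{[T - \delta_2, T]}(\pi) \to 0$. Here I would invoke Lemma \ref{lem: I bord nul}: each test function $H \in C^{1,2}_K(]0,T[ \times [-1,1])$ has support in $]0, T - \delta_2^\ast[ \times [-1,1]$ for some $\delta_2^\ast = \delta_2^\ast(H) > 0$, so for every $\delta_2 < \delta_2^\ast$ one has $\hat J^{[0, T - \delta_2]}_H(\pi) = \hat J^{[0,T]}_H(\pi)$. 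Passing to the supremum over such $H$ yields $\liminf_{\delta_2 \to 0} I_{[0, T - \delta_2]}(\pi) \geq I_{[0, T]}(\pi)$, while the reverse inequality follows from additivity. Thus $I_{[T - \delta_2, T]}(\pi) = I_{[0,T]}(\pi) - I_{[0, T - \delta_2]}(\pi) \to 0$, completing the proof.
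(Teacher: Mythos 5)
Your construction is the same as the paper's: splice a short time-constant piece into the hydrodynamic window and shift the remainder of the trajectory, then control the three sub-intervals via Corollary \ref{cor: separtion I}, lower semicontinuity giving the matching lower bound. The differences are in how two steps are justified. First, you freeze the profile exactly at time $\delta_1$ and bound the cost of the flat piece by a constant $C(\rho_{\delta_1})$, which requires knowing that the fixed-time slice $\rho_{\delta_1}$ has finite Dirichlet-type energy; you get this from the regularity statement of Proposition \ref{p07}, which is legitimate under the standing hypotheses of this section, but it is a heavier input than the paper uses. The paper instead selects a good splice time $t^n_1<\delta$ by an averaging argument (see \eqref{09}), so that the flat-piece cost is controlled by a tail of the energy integral of $\rho$; this needs only Corollary \ref{cor: hydro ener fini} and keeps the proof insensitive to the weakening of the hypotheses envisaged in Remark \ref{rm3}, where only the bound away from $0$ and $1$, not smoothness, would be available. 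Second, what you call the main technical obstacle, namely $I_{[\delta_1,T-\delta_2]}(\pi)\to I_{[\delta_1,T]}(\pi)$, is not needed: for the $\limsup$ bound the monotonicity $I_{[\delta_1,T-\delta_2]}(\pi)\le I_{[\delta_1,T]}(\pi)$ (additivity plus nonnegativity) suffices, and the $\liminf$ is already covered by lower semicontinuity of $I_{[0,T]}(\cdot|\gamma)$; your resolution of this superfluous step via compactly supported test functions and Lemma \ref{lem: I bord nul} is nevertheless correct. A small inaccuracy: finiteness of the energy of the spliced path is not a convexity statement about $\mathcal{Q}$ but follows from the integral representation of Proposition \ref{p06} (additivity in time) together with the finiteness of the fixed-time slice, which again is the point where either regularity or the paper's choice of splice time must be invoked.
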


\begin{proof}
Fix $\pi(t,dx)=u(t,x)dx$ in $\mathcal{F}_1$. By Lemma \ref{lc01}, it
is enough to show that there exists a sequence
$(\pi^n)_{n \in \mathbb{N}}$ in $\mathcal{F}_2$ such that
$I_{[0,T]}(\pi^n|\gamma) \to I_{[0,T]}(\pi|\gamma)$ and
$\pi^n\to \pi$.

By definition of $\mathcal{F}_1$, there exists $\delta>0$ such that
$u_t=\rho_t$ for $t \in [0, \delta]$. Fix a sequence
$r_n\uparrow \delta$.  Since $\rho$ has a finite energy, there exists
$r_n < t^n_1 < \delta$ such that
\begin{equation}
\label{09}
\int_{-1}^1\frac{D(\rho_{t^n_1}) \, (\nabla \rho_{t^n_1})^2 }
{\sigma(\rho_{t^n_1})} \, dx
\,\le\, \frac{2}{\delta - r_n} \int_{r_n}^\delta 
\int_{-1}^1\frac{D(\rho_{t}) \, (\nabla \rho_{t})^2 }
{\sigma(\rho_{t})} \, dx \, dt
<+ \infty \;.
\end{equation}

Consider a sequence $t^n_2$ such that $t^n_2 \to 0$,
$t^n_1+ t^n_2 < \delta$.  Define the path
$\pi^{(n)}(t,dx)=u^{(n)}(t,x)dx$, $n \in \mathbb{N}^*$, by
$u_t^{(n)}=u_t$ on $[0,t^n_1]$, $u_t^{(n)}=u_{t^n_1}$ on
$[t^n_1, t^n_1+ t^n_2]$ and $u_t^{(n)}=u_{t-t^n_2}$ on
$[t^n_1+ t^n_2,T]$.  By construction, $\pi^{(n)}$ belongs to
$\mathcal{F}_1$ and $\pi^{(n)} \to \pi$.  By lower semicontinuity of
the rate function, it remains to show that
$\limsup_n I_{[0,T]}(\pi^{(n)}|\gamma) \leq I_{[0,T]}(\pi|\gamma)$.

By Corollary \ref{cor: separtion I},
\begin{align*}
I_{[0,T]}(\pi^{(n)}|\gamma)
\,=\, I_{[0,t^n_1]}(\pi^{(n)}|\gamma)
\,+\, I_{[t^n_1, s^n_2]}(\pi^{(n)}) \,+\, I_{[s^n_2,T]}(\pi^{(n)}) \;.
\end{align*}
where $s^n_2 = t^n_1+ t^n_2$.  By definition of $\pi^{(n)}$,
$I_{[0,t^n_1]}(\pi^{(n)}|\gamma)=I_{[0,t^n_1]}(\rho|\gamma)=0$ for all
$n$.

We claim that
$\underset{n \to + \infty}{\lim} I_{[t^n_1,s^n_2]}(\pi^{(n)}) = 0 $.
Fix $H\in C^{1,2}([t^n_1,s^n_2] \times [-1,1])$. Since $\pi^{(n)}$ is
constant on $[t^n_1,s^n_2]$, $L_0(H)=0$, and by Young's inequality,
\begin{align*}
\hat{J}_{[t^n_1,s^n_2],H}(\pi^{(n)}) \,\le\, 
(s^n_2 - t^n_1)\,
\int_{-1}^1 \frac{D(\rho(t^n_1,x)) [\nabla \rho(t^n_1,x)]^2}
{4\, \sigma(\rho (t^n_1, x)} \, dx  
\,-\, \int_{t^n_1}^{s^n_2}B(\rho_{t^n_1},H_t)dt \;.
\end{align*}
By \eqref{09}, the first term is bounded by
\begin{equation*}
\int_{r_n}^{\delta} \int_{-1}^1 \frac{D(\rho(t,x)) [\nabla \rho(t,x)]^2}
{2\, \sigma(\rho (t, x)} \, dx \, dt\;, 
\end{equation*}
which vanishes as $n\to\infty$ because $\rho$ has finite energy. On
the other hand, as $-\, B(a,H) \le 4$, the second term is bounded by
$4(s^n_2 - t^n_1) \to 0$. This proves that
$I_{[t^n_1, s^n_2]}(\pi^{(n)}) \to 0 $.

Finally, by construction,
$I_{[s^n_2, T]}(\pi^{(n)}) =  I_{[t^n_1, T - t^n_2]}(\pi) \le
 I_{[t^n_1, T]}(\pi)$, which completes the proof of the lemma.
\end{proof}

\subsection*{ $\mathcal{F}_3$ density}

Let $\mathcal{F}_3$ be the set of all paths $\pi(t,dx)=u(t,x)dx$ in
$\mathcal{F}_2$ for which there exist $\delta, \delta_1>0$ such that
$u_t=\rho_t$ in $[0,\delta]$, $u_t$ is constant in
$[\delta,\delta+\delta_1]$, and $u(.,x)$ belongs to
$C^{\infty}([\delta,T] )$ for each $x \in [-1,1]$.

The proof of the $\mathcal{F}_3$-density relies one the representation
of the rate functional derived in Proposition \ref{p02}.  Fix
$\pi \in \mathcal{F}_2$. By definition, $\pi(t,dx)=u(t,x)dx$ belongs
to $D([0,T], \mathcal{M})$, has finite energy, and there exist
$(\delta,\delta_1) \in [0,T]^2$ such that:
\begin{align*}
u_t=
\left\{ 
\begin{array}{ccc}
\rho_t &   0 \leq t \leq \delta\;, \\
u_{\delta}  &  \delta \leq t \leq \delta + \delta_1\;, 
\end{array}
\right.
\end{align*}
where $\rho$ is the weak solution of the hydrodynamic
equation. Moreover, for every $\delta_2$, there exists $\epsilon>0$
such that $\epsilon \leq u(t,x) \leq 1-\epsilon$ for all
$(t,x) \in [\delta_2,T] \times [-1,1]$.

Fix a smooth function $\varphi$ such that
supp$(\varphi) \subset\, ]0,1[$ and $\int_0^1\varphi(t)dt=1$.  Let
$\varphi_{\epsilon}(t)= (1/\epsilon)\, \varphi(t/\epsilon)$,
$\epsilon>0$. We extend the definition of $u $ to $[0,2T]$ setting
$u(T+t,x)=u(T-t,x)$ for $0\le t\le T$. For $n$ such that
$1/n<\delta_1$, let
\begin{align}
\label{eq : def u^n F_3}
u^n(t)= \left \{
\begin{array}{ccc}
u(t) &  0 \leq t \leq \delta \\
(u*\varphi_{\frac{1}{n}}) (t)  & \delta \leq t \leq T \;.
\end{array}
\right .
\end{align}
Note that $u(t) = u(\delta)$ for
$\delta \le t\le \delta+ \delta_1 - (1/n)$.  Set
$\pi^n(t,dx)=u^n(t,x)dx$.

\begin{lem}
\label{l06}
Fix $\pi \in \mathcal{F}_2$.  Each sequence in $\bb N$ has a
subsequence $(\psi(n))_{n \in \mathbb{N}}$ such that
\begin{align*}
\underset{n \to \infty}{\lim}
I_{[0,T]}^{(1)}(\pi^{\psi(n)}|\gamma) = I_{[0,T]}^{(1)}(\pi|\gamma)    
\end{align*}
\end{lem}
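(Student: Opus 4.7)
The strategy is to identify explicitly the momentum $P^{n}$ associated to the mollified trajectory $\pi^{n}$, show that $M^{n}_t := P^{n}_t + D(u^{n}_t) \nabla u^{n}_t$ converges to $M_t$ in the weighted space $\bb L^2(\sigma(u)^{-1})$ along a subsequence, and then invoke dominated convergence in the explicit formula for $I_{[0,T]}^{(1)}$ supplied by Proposition \ref{p02}.

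First I would gather the elementary convergences. Standard mollifier theory, applied to $u\in L^\infty(\Omega_T)$ with $\nabla u\in L^2(\Omega_T)$ (finite energy, Proposition \ref{p06}), gives $u^{n}\to u$ and $\nabla u^{n}\to \nabla u$ in $L^2(\Omega_T)$, with the same boundary values at $x=\pm 1$ on any compact sub-interval of $]\delta,T]$. Extract a subsequence (still denoted $n$) so that $u^{n}(t,x)\to u(t,x)$ almost everywhere on $\Omega_T$. Since $\pi\in\mc F_{2}\subset \mc F_{1}$, for each $\delta'>0$ there exists $\epsilon>0$ with $\epsilon\le u\le 1-\epsilon$ on $[\delta',T]\times[-1,1]$, and the same bound, with a slightly worse constant, holds for $u^{n}$ as soon as $1/n<\delta'-\delta$. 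Consequently $\sigma(u^{n})^{-1}$ and $D(u^{n})$ are uniformly bounded on $[\delta',T]\times[-1,1]$ and converge a.e.\ to $\sigma(u)^{-1}$ and $D(u)$.

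Next I would identify $P^{n}$. On $[0,\delta]$ one has $u^{n}=u=\rho$, so the uniqueness statement in Lemma \ref{l01} forces $P^{n}=P$ there. On $[\delta,T]$ the convolution structure of $u^{n}$ allows one to commute the mollifier with the weak identity $-\int_0^T \<\pi_t,\partial_t H_t\>\,dt=\<\!\<\nabla H,P\>\!\>$: testing against $H\in C^\infty_K(]\delta,T[\times]-1,1[)$ and transferring the mollification onto $H$ shows that
\begin{equation*}
P^{n}_t\;=\;(P*\varphi_{1/n})_t\;+\;c_n(t),
\end{equation*}
where the spatially constant function $c_n(t)$ is uniquely determined by the normalization $\<P^{n}_t/\sigma(u^{n}_t)\>=0$. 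Since $P$ already satisfies $\<P_t/\sigma(u_t)\>=0$, the $c_n(t)$ capture only the mismatch coming from the replacement $\sigma(u)\mapsto \sigma(u^{n})$, which vanishes as $n\to\infty$ by dominated convergence and the a.e.\ convergence of $\sigma(u^{n})^{-1}$. The convergence $P*\varphi_{1/n}\to P$ in $\bb L^2(\sigma(u)^{-1})$ on $[\delta',T]\times[-1,1]$ follows from $P\in \bb L^2(\sigma(u)^{-1})$ (Lemma \ref{l01}) together with the uniform boundedness of $\sigma(u)^{-1}$ on that set.

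Finally, combining these ingredients, $M^{n}\to M$ in $\bb L^2(\sigma(u)^{-1})$ on $[\delta',T]\times[-1,1]$ for every $\delta'>0$, while on $[0,\delta]$ the integrands coincide with those of $\pi$. The two terms in the explicit formula
\begin{equation*}
I_{[0,T]}^{(1)}(\pi^{n})\;=\;\frac{1}{4}\int_0^T\Big\{\,\<M^{n}_t,M^{n}_t\>_{\sigma(u^{n}_t)^{-1}}\,-\,\<M^{n}_t\sigma(u^{n}_t)^{-1}\>^2\mc S^{n}_t\,\Big\}\,dt
\end{equation*}
then converge to the corresponding quantities for $\pi$ by dominated convergence, using the uniform bounds on $\sigma(u^{n})^{-1}$ and $D(u^{n})$ on $[\delta',T]$ and, on $[0,\delta]$, the finite energy of the hydrodynamic solution $\rho$ (Corollary \ref{cor: hydro ener fini}). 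The main obstacle is the treatment of the interface at $t=\delta$ and the control of the normalizing correction $c_n(t)$: one exploits that $u$ is constant on $[\delta,\delta+\delta_1]$ by construction of $\mc F_{2}$, so that on a neighbourhood of $\delta$ both $\nabla u^{n}$ and $\partial_t u^{n}$ are well-behaved and the correction $c_n(t)$ is small in $L^2$ uniformly on that neighbourhood.
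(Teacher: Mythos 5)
Your proposal follows essentially the same route as the paper's proof: identify $P^n$ as $P$ on $[0,\delta]$ and as the time-mollification $P*\varphi_{1/n}$ on $[\delta,T]$, use that $u$ (hence $u^n$) is bounded away from $0$ and $1$ on $[\delta,T]$ so that the weighted and unweighted $L^2$ spaces coincide, extract an a.e.\ convergent subsequence of $u^n$, obtain $L^2$-convergence of $P^n$ and $D(u^n)\nabla u^n$, and pass to the limit in the formula of Proposition \ref{p02}. The only real deviation, the normalizing correction $c_n(t)$, is unnecessary since the expression defining $I^{(1)}_{[0,T]}$ is invariant under adding spatially constant functions to $P^n$ (and your minor claim that $u^n$ keeps the same boundary values is inaccurate but irrelevant here, as $I^{(1)}_{[0,T]}$ involves no boundary terms).
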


\begin{proof}
By Proposition \ref{p02}:
\begin{align*}
I_{[0,T]}^{(1)}(\pi^n|\gamma) &=\frac{1}{4}\int_0^T \Bigl
\{\<M_n(t),M_n(t)\>_{\sigma(u_t^n)^{-1}}-
\<M_n(t)\sigma(u_t^n)^{-1}\>^2\<\sigma(u_t^n)^{-1}\>^{-1} \Bigl \} dt
\end{align*}
\\where $M_n=P_n+D(u^n)\nabla u^n$ and $P_n \in L^2(\sigma(u^n)^{-1})$,
introduced in Lemma \ref{l01}, is such that 
\begin{align*}
-\int_0^T\<\pi^n,\partial_t H_t\>dt=\<\<\nabla H, P_n\>\> 
\end{align*}
for every $H \in C_K^{\infty}(]0,T[\times ]-1,1[)$. An elementary
computation shows that
\begin{align*}
P_n(t)=\left \{ 
\begin{array}{ccc}
P(t)    & 0 \le t \leq \delta  \\
(P*\varphi_{\frac{1}{n}})(t)   & \delta \le t \le T\;,
\end{array}
\right .    
\end{align*}
provided we extend the definition of $P$ to $[0,2T]$ setting
$P(T+t)=-P(T-t)$ for $t \in [0,T]$.

Since $u^n=u$ and $P_n=P$ on the time-interval $[0,\delta]$, 
in view of the formula for $I_{[0,T]}^{(1)}(\pi^n|\gamma)$, we only
need to examine the interval $[\delta, T]$. In this interval,
\begin{align*}
u^n=u*\varphi_{\frac{1}{n}} \;, \quad
\nabla u^n = \nabla u * \varphi_{\frac{1}{n}}
\;,\;\; \text{ and }\;\; P_n=P*\varphi_{\frac{1}{n}}    \;.
\end{align*}

As $\pi$ belongs to $\mc F_2$, there exists $\epsilon>0$ such that
$\epsilon \le u \le 1-\epsilon$ on the interval $[\delta, T]$.
In particular, on the interval $[\delta , T]$, the space 
$L^2$ and $L^2(\sigma(u^n)^{-1})$ coincide.

Fix the set $[\delta, T]$.  As $u^n$ converge to $u$ in $L^1$, there
exists a subsequence, still denoted by $u^n$, which converges a.s. to
$u$. Hence, the uniformly bounded sequences $D(u_n)$,
$\sigma (u_n)^{-1}$ converge a.s. to $D(u)$, $\sigma (u)^{-1}$,
respectively.

Since $P$, $\nabla u$ belong to $L^2$, $D$ is bounded and $u_n$
converge a.s. to $u$, $P_n$, $D(u_n)\, \nabla u_n$
converge in $L^2$ to $P$, $D(u)\, \nabla u $, respectively.

Putting together the previous estimates yields that there exists a
subsequence $\psi(n)$ such that
\begin{align*}
&\underset{n \to \infty}{\lim}
\frac{1}{4}\int_{\delta}^T \Bigl
\{ \, \Vert\, M_{\psi(n)}(t)\,\Vert^2 _{\sigma(u_t^{\psi(n)})^{-1}}
- \<M_{\psi(n)}(t)\, \sigma(u_t^{\psi(n)})^{-1}\>^2
\<\sigma(u_t^{\psi(n)})^{-1}\>^{-1} \Bigl \} \, dt \\
&\quad =\, \frac{1}{4}\int_{\delta}^T \Bigl
\{ \Vert\, M_{\psi(n)}(t)\,\Vert^2_{\sigma(u_t)^{-1}}
- \<M(t)\sigma(u_t)^{-1}\>^2
\<\sigma(u_t)^{-1}\>^{-1} \Bigl \} \, dt\;,
\end{align*} 
which completes the proof of the lemma.
\end{proof}

\begin{lem}
\label{lem: aux2 densite F3} 
Fix $\pi \in \mathcal{F}_2$.  Each subsequence of $\bb N$ has a
subsequence $(\psi(n))_{n \in \mathbb{N}}$ such that
\begin{align*}
\underset{n \to \infty}{\limsup} I_{[0,T]}^{(2)}(\pi^{\psi(n)}|\gamma)
\leq I_{[0,T]}^{(2)}(\pi|\gamma) \;.
\end{align*}
\end{lem}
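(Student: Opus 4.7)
The plan is to invoke the explicit representation $I^{(2)}_{[0,T]}(\pi)=\tfrac{1}{4}\int_0^T\Phi_t(u_t(-1),u_t(1),g(t),h(t))\,dt$ from Proposition \ref{p03} on the interval $[\delta,T]$, establish pointwise a.e.\ convergence of the integrand along a subsequence $(\psi(n))$, and pass to the limit under the integral by means of a uniform integrability estimate based on the exponential bound \eqref{06}.

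First, I would localize the problem to $[\delta,T]$. Since $\pi\in\mc F_2\subset \mc F_1$, for every $\delta_0>\delta$ there exists $\epsilon>0$ with $\epsilon\le u(t,x)\le 1-\epsilon$ on $[\delta_0,T]\times[-1,1]$; because $u_t$ is constant on $[\delta,\delta+\delta_1]$, mollification with a compactly supported kernel preserves this bound (up to halving $\epsilon$) for $u^n$ on $[\delta_0,T]$ once $n$ is large. On $[0,\delta]$ both $u$ and $u^n$ coincide with the hydrodynamic solution $\rho$; the vanishing $I_{[0,\delta]}(\pi|\gamma)=0$, the non-negativity of $I^{(1)}$ and $I^{(2)}$, and the strict convexity of $\Upsilon_t$ in $(\alpha,\beta)$ together force $g=h=g_n=h_n=0$ a.e.\ on $(0,\delta)$, so the integrand of $I^{(2)}$ vanishes there. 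It therefore suffices to work with
\begin{equation*}
I^{(2)}_{[0,T]}(\pi^n)=\frac{1}{4}\int_\delta^T\Phi_t^n\big(u^n_t(-1),u^n_t(1),g_n(t),h_n(t)\big)\,dt,
\end{equation*}
where $\Phi_t^n$ uses $\mc S_t^n=1/\<\sigma(u^n_t)^{-1}\>$, and the analogous expression for $\pi$.

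Next I would establish the pointwise convergence of each ingredient. The convolution identities $P_n=P*\varphi_{1/n}$, $P^X_n=P^X*\varphi_{1/n}$, $P^{\mathbb{1}}_n=P^{\mathbb{1}}*\varphi_{1/n}$ (using suitable extensions of $P$, $P^X$, $P^{\mathbb{1}}$ across $\delta$ and $T$) and $\nabla u^n=\nabla u*\varphi_{1/n}$, combined with the a.e.\ convergence of mollifications, allow extraction of a subsequence $\psi(n)$ along which $u^{\psi(n)}_t(\pm1)$, $P_{\psi(n)}$, $P^X_{\psi(n)}$, $P^{\mathbb{1}}_{\psi(n)}$ and $\nabla u^{\psi(n)}$ converge a.e.\ in $t$. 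Continuity of $D$ and $\sigma$ together with the $L^\infty$ bounds on $u$ away from $0$ and $1$ then yield $D(u^{\psi(n)})\to D(u)$ and $\sigma(u^{\psi(n)})^{-1}\to\sigma(u)^{-1}$ a.e., hence $g_{\psi(n)}\to g$, $h_{\psi(n)}\to h$ and $\mc S_t^{\psi(n)}\to\mc S_t$ a.e.\ on $(\delta,T)$. By joint continuity of $\Phi_t$ in all its arguments (including the parameter $\mc S_t$), the integrand converges a.e.

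The principal obstacle is the passage from a.e.\ convergence of the integrand to convergence of its integral, since only an upper bound is desired. I plan to handle this through a uniform integrability argument. If $(\alpha_n(t),\beta_n(t))$ denotes the Legendre maximizer defining $\Phi^n_t$, the bound \eqref{06} yields $e^{|\alpha_n|}+e^{|\beta_n|}\le C(\epsilon)\{1+|g_n|+|h_n|\}$, and consequently
\begin{equation*}
\Phi^n_t(\,\cdot\,)\,\le\, \alpha_n g_n+\beta_n h_n\,\le\, C\big(|g_n|+|h_n|\big)\log\big(1+|g_n|+|h_n|\big)+C.
\end{equation*}
The finiteness of $I^{(2)}_{[0,T]}(\pi)$ combined with the superlinear growth of the Legendre transform, $\Phi_t(r,s,x,y)\gtrsim |x|\log|x|+|y|\log|y|$, gives $g,h\in L^1\log L^1([\delta,T])$. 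Since Jensen's inequality applied to convolutions preserves Orlicz norms, the sequence $\{|g_{\psi(n)}|+|h_{\psi(n)}|\}$ is uniformly integrable in $L^1\log L^1([\delta,T])$, and Vitali's convergence theorem then delivers $\limsup_n I^{(2)}_{[0,T]}(\pi^{\psi(n)}|\gamma)\le I^{(2)}_{[0,T]}(\pi|\gamma)$.
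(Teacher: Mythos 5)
Your overall skeleton (localizing to $[\delta,T]$ where $\epsilon\le u\le 1-\epsilon$, invoking the representation of $I^{(2)}$ from Proposition \ref{p03}, and exploiting the exponential bound \eqref{06} on the maximizers) matches the paper's starting point, but your limit passage has a genuine gap at its crucial step. You justify uniform integrability of $\{(|g_{\psi(n)}|+|h_{\psi(n)}|)\log(1+|g_{\psi(n)}|+|h_{\psi(n)}|)\}$ by saying that ``Jensen's inequality applied to convolutions preserves Orlicz norms.'' That argument would apply if $g_n=g*\varphi_{1/n}$ and $h_n=h*\varphi_{1/n}$, but they are not: $g_n$ and $h_n$ are built from $u^n=u*\varphi_{1/n}$, $P_n=P*\varphi_{1/n}$, $P^X_n$, $P^{\mathbb 1}_n$ through the \emph{nonlinear} expressions in \eqref{04} --- $\<D(u^n_t)\nabla u^n_t\>$, $\<M^n_t\>$, $\<M^n_t\sigma(u^n_t)^{-1}\>\mathcal S^n_t$, $u^n_t(\pm 1)$ --- none of which is the time-convolution of the corresponding quantity for $u$. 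So Jensen gives you nothing directly about $g_n,h_n$, and the Orlicz bound you need is exactly where the real work lies. (A related loose end: you only obtain $g,h\in L^1\log L^1$ from $I^{(2)}(\pi)<\infty$, not Orlicz regularity of the individual constituents $P^{\mathbb 1},P^X$, which your piecewise Jensen argument would require; on a finite interval this can be recovered from $L^2$-boundedness of the remaining terms, but none of this is in your write-up.) The gap is plausibly patchable --- split $g_n,h_n$ into genuine convolution pieces (Jensen) and nonlinear pieces uniformly bounded in $L^2([\delta,T])$ (hence with uniformly integrable $x\log(1+x)$-compositions) --- but as written the Vitali step is unsupported.

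It is worth comparing with the paper's route, which avoids pointwise convergence and uniform integrability altogether. There, one first replaces, inside the variational formula for $I^{(2)}$, every term of $g^n,h^n,\mathcal S^n$ that is \emph{not} a convolution by the convolution of the corresponding term for $u$; the replacement error is controlled by Young's inequality together with \eqref{06}, using that the terms being swapped converge in $L^2([\delta,T])$. After the replacement the expression inside the supremum over $(\alpha,\beta)$ is a $\varphi_{1/n}$-average in time, so the supremum of the average is bounded by the average of the suprema, giving the bound $\frac14\int_\delta^T(\Phi_\cdot*\varphi_{1/n})(t)\,dt$, which converges to $\frac14\int_\delta^T\Phi_t\,dt$ because $\Phi_\cdot(u_\cdot(-1),u_\cdot(1),g,h)\in L^1$. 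This yields exactly the $\limsup$ inequality needed, with the nonlinear terms handled once, explicitly; your scheme, if completed, would prove actual convergence of $I^{(2)}(\pi^{\psi(n)})$, but at the price of the Orlicz-uniform-integrability analysis you have not carried out.
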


\begin{proof}
Recall from Proposition \ref{p03} the formula for the functional
$I_{[0,T]}^{(2)}(\,\cdot\,|\gamma)$. As in the proof of the previous
lemma, we may concentrate on the interval $[\delta, T]$.

Since $u$ belongs to $\mc F_2$, there exists $\epsilon>0$ such that
$\epsilon \le u \le 1- \epsilon$ on the time-interval $[\delta,T]$.
\begin{equation*}
\begin{aligned}
& \int_\delta^T \Phi_t\bigl(u^n_t(-1),u^n_t(1),g^n(t),h^n(t)\bigl) \,
dt \\
&\quad =\;
\int_\delta^T \sup_{\alpha, \beta}
\Bigl \{ \alpha g^n(t) + \beta h^n(t) -
\Upsilon_t(u^n_t(-1),u^n_t(1) , \alpha,\beta) \Bigl \} \, dt\;,
\end{aligned}
\end{equation*}
where $g_n(t)$, $h_n(t)$ are given by \eqref{04} with $u^n$ replacing
$u$. Mind that this substitution affects the definitions of $P^X_n$,
$P^{\bb 1}_n$, $M^n$ and $\mc S^n$.  By the bound on $\alpha$, $\beta$
presented in Proposition \ref{p03}, we may restrict the supremum to
pairs $(\alpha,\beta)$ such that
$e^{|\alpha|} + e^{|\beta|} \leq C_0 (1 + |g_n(t)| + |h_n(t)|)$.

Most of the terms appearing in the formula for $g^n$ and $h^n$ are
convolutions with $\varphi_{\frac{1}{n}}$. For example,
$P_n^X(t)=\bigl(P^X*\varphi_{\frac{1}{n} }\bigl)(t)$. We wish to
replace all terms which appear in the formula for $g^n$, $h^n$,
$\mc S^n$ and are not expressed as convolutions by convolutions. We
present the details for one of them. The others are handled similarly.

Consider the term $(\alpha-\beta)\<D(u_n)\nabla u_n\>$. Add and
subtract $(\alpha-\beta)\<D(u)\nabla u\>*\varphi_{\frac{1}{n}}$. Since
the supremum of a sum is bounded by the sum of the supremums, to carry
out this replacement, it is enough to show that
\begin{align*}
\underset{n \to + \infty}{\lim}
\int_{\delta}^T\underset{\alpha, \beta}
{\mbox { sup }} \Bigl \{(\alpha-\beta)
\bigl [ \<D(u_n)\nabla u_n\>(t)
- \<D(u)\nabla u\>*\varphi_{\frac{1}{n}}(t)\bigl ]  \Bigl \}
\, dt =0\;, 
\end{align*}
where, recall, the supremum is restricted to pairs whose exponentials
are bounded. By Young's inequality $2xy \leq A^{-1} x^2+Ay^2$, and
since $a^2 \leq C_0 + e^{|a|}$ for some finite constant $C_0$ whose
value may change from line to line, integral is bounded by:
\begin{align*}
\frac{C_0}{A}\int_{\delta}^T \bigl ( 1+|g_n(t)|+|h_n(t)|\bigl )  \, dt
\,+\, A\int_{\delta}^T \bigl [ \<D(u_n)\nabla u_n\>(t)
- \<D(u)\nabla u\>*\varphi_{\frac{1}{n}}(t)\bigl ]^2  \, dt
\end{align*}
for all $A>0$.

Since $D(.)$ is continuous and $u_n$, $\nabla u_n$ converge in
$L^2([\delta, T]\times [-1,1])$ to $u$, $\nabla u$, respectively, the
second term vanisheds as $n \to + \infty$. For a similar reasons, the
first term vanishes, as $n \to + \infty$, and then $A\to\infty$.

After all replacements, we obtain that
\begin{equation*}
\begin{aligned}
& \underset{n \to + \infty}{\limsup}\,
I_{[\delta , T]}^{(2)}(\pi^n) 
\leq\underset{n \to + \infty}{\limsup} \,
\frac{1}{4}\int_{\delta}^T \\
& \sup_{\alpha, \beta} \Big\{ \int_0^{1/n} \varphi_n(s)\,
\Big \{ \, \alpha \, g(t+s) + \beta \, h(t+s) 
\, -\, \Upsilon_{t+s} (u_{t+s}(-1),u_{t+s}(1),\alpha ,\beta ) \Big
\} \, ds\, \Big\}\, dt \;.
\end{aligned}
\end{equation*}
By moving the supremum inside the integral, the right-hand side is
seen to be bounded by
\begin{align*}
\underset{n \to + \infty}{\limsup}\,
\frac{1}{4}\int_{\delta}^T
[\Phi_\cdot (u_{\cdot} (-1), u_{\cdot} (1),
g(\cdot), h(\cdot)) * \varphi_n] (t)\, dt \;.
\end{align*}
Since $\Phi$ is integrable, this expression is equal to
\begin{align*}
\frac{1}{4}\int_{\delta}^T \Phi_t(u_t(1),u_t(-1),g(t),h(t))
\, dt\;,
\end{align*}
which completes the proof of the lemma.
\end{proof}

\begin{lem}
The set $\mathcal{F}_3$ is $I_{[0,T]}(.|\gamma)$-dense.
\end{lem}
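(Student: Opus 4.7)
The plan is to combine the $\mathcal{F}_2$-density (Lemma \ref{lc03}), the decomposition of the rate functional from Proposition \ref{p02}, and the two technical Lemmas \ref{l06} and \ref{lem: aux2 densite F3} just established. By Lemma \ref{lc03}, a standard diagonal argument reduces the $\mathcal{F}_3$-density to the problem of approximating every $\pi \in \mathcal{F}_2$ by a sequence $(\pi^n)_{n\geq 1}$ in $\mathcal{F}_3$ with $\pi^n \to \pi$ in $D([0,T],\mathcal{M})$ and $I_{[0,T]}(\pi^n|\gamma) \to I_{[0,T]}(\pi|\gamma)$. The natural candidates are the paths $\pi^n(t,dx) = u^n(t,x)\, dx$ defined in \eqref{eq : def u^n F_3} via convolution in time with $\varphi_{1/n}$.

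First, I would verify that $\pi^n \in \mathcal{F}_3$ for $n$ large enough. By construction $u^n = u = \rho$ on $[0,\delta]$, which preserves both the initial condition $u^n(0,\cdot)=\gamma$ and the first requirement in the definition of $\mathcal{F}_3$. Since $u$ is constant equal to $u_\delta$ on $[\delta, \delta+\delta_1]$ and $\varphi_{1/n}$ is supported in a set of length $1/n < \delta_1$, the convolution $(u*\varphi_{1/n})(t)$ equals $u_\delta$ on a non-trivial subinterval of $[\delta,\delta+\delta_1]$, giving the ``constancy'' condition of $\mathcal{F}_3$. Smoothness of $u^n$ in time on $[\delta,T]$ is immediate from $\varphi_{1/n} \in C^\infty$ and Young's inequality for convolution. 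The pointwise bounds $\epsilon \le u^n \le 1-\epsilon$ (on $[\delta',T]\times[-1,1]$ for each $\delta'>0$) are preserved because convolution against a probability density is a convex combination. Finite energy of $\pi^n$ follows by Jensen's inequality from the finite energy of $\pi$. Convergence $\pi^n \to \pi$ in $D([0,T],\mathcal{M})$ reduces to the standard fact that $u * \varphi_{1/n} \to u$ in $L^1([\delta,T]\times[-1,1])$.

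For the convergence of the rate functionals, I would invoke Proposition \ref{p02} to write
\begin{equation*}
I_{[0,T]}(\pi^n|\gamma) \;=\; I^{(1)}_{[0,T]}(\pi^n|\gamma) \;+\; I^{(2)}_{[0,T]}(\pi^n|\gamma)\;,
\end{equation*}
and the same identity for $\pi$. By Lemma \ref{l06}, a subsequence $\psi(n)$ satisfies $I^{(1)}_{[0,T]}(\pi^{\psi(n)}|\gamma) \to I^{(1)}_{[0,T]}(\pi|\gamma)$; by Lemma \ref{lem: aux2 densite F3}, passing to a further subsequence (still denoted $\psi(n)$), $\limsup_n I^{(2)}_{[0,T]}(\pi^{\psi(n)}|\gamma) \le I^{(2)}_{[0,T]}(\pi|\gamma)$. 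Summing these two estimates yields
\begin{equation*}
\limsup_n I_{[0,T]}(\pi^{\psi(n)}|\gamma) \;\leq\; I_{[0,T]}(\pi|\gamma)\;.
\end{equation*}
The reverse inequality $\liminf_n I_{[0,T]}(\pi^{\psi(n)}|\gamma) \geq I_{[0,T]}(\pi|\gamma)$ is immediate from the lower semicontinuity of $I_{[0,T]}(\cdot|\gamma)$ given by Theorem \ref{thm: semi cont de I}, combined with $\pi^{\psi(n)} \to \pi$. Hence $\lim_n I_{[0,T]}(\pi^{\psi(n)}|\gamma) = I_{[0,T]}(\pi|\gamma)$.

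The main obstacle here is essentially bookkeeping: all the hard analytic work is already encapsulated in Lemmas \ref{l06} and \ref{lem: aux2 densite F3}. The one subtlety that needs care is to check that the time-convolution indeed preserves each property defining $\mathcal{F}_2$ and upgrades $\pi$ to an element of $\mathcal{F}_3$ — in particular, the ``constant on a subinterval'' property requires $1/n < \delta_1$ and the precise choice of support of $\varphi_{1/n}$, while the smoothness and the two-sided bound away from $0$ and $1$ rely on the fact that convolution is a convex averaging operation. Once these verifications are in place, the passage from $\mathcal{F}_2$ to $\mathcal{F}_3$, together with Lemma \ref{lc03}, completes the proof.
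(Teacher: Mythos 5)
Your proposal is correct and follows essentially the same route as the paper: take $\pi\in\mathcal{F}_2$, mollify in time as in \eqref{eq : def u^n F_3}, check membership in $\mathcal{F}_3$ and convergence, and combine Lemmas \ref{l06} and \ref{lem: aux2 densite F3} (along a common subsequence) with the lower semicontinuity of Theorem \ref{thm: semi cont de I}. Your verification that the convolution preserves the defining properties of $\mathcal{F}_2$ and yields the extra smoothness is exactly the ``it's clear'' step the paper leaves implicit.
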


\begin{proof}
Fix $\pi(t,dx)=u(t,x)dx$ in $\mathcal{F}_2$ such that
$I_{[0,T]}(\pi|\gamma)<+\infty$. Consider
$(\pi^n)_{n \in \mathbb{N}^*}$ as in (\ref{eq : def u^n F_3}). It's
clear that $\pi^n$ belongs to $\mathcal{F}_3$ and converges to
$\pi$. Since, by Theorem \ref{thm: semi cont de I}, the rate function
is lower semicontinuous, to complete the proof it remains to show that
$\limsup_n I_{[0,T]}(\pi^n|\gamma) \leq I_{[0,T]}(\pi|\gamma) $. This
follows from the two previous lemmata.
\end{proof}

\subsection*{ $\Pi_\gamma$ density}

Recall the definition of the set $\Pi_\gamma$ introduced at the
beginning of this section.  Denote by $(P_t^{(D)})_{t \geq 0}$ and
$(P_t^{(N)})_{t \geq 0}$ the semigroup associated to the Laplacian on
$[-1,1]$ with Dirichlet, Neumann boundary conditions,
respectively. The proof of the next result is similar to the one of
Lemmata 5.6 and 5.7 in \cite{fgln}.  It relies on the decomposition of
the rate function presented in Proposition \ref{p04} and uses the fact
that $D$ is bounded.

\begin{lem}
Fix $\pi(t,dx)=u(t,x)dx$ in $\mathcal{F}_3$ such that
$I_{[0,T]}(\pi|\gamma)<\infty$. Let $\delta$, $\delta_1>0$ be the
positive constants such that $u_t=\rho_t$ on $[0,\delta ]$ and
$u_t=\rho_{\delta}$ on $[\delta,\delta+\delta_1 ]$.  Let
$\kappa\colon [0,T] \to [0,1]$ be a nondecreasing smooth function such
that
\begin{align*}
\kappa(t)=
\left \{ 
\begin{array}{ccc}
0 & 0 \leq t \leq \delta  \;, \\
0 < \kappa (t) < 1 & \delta  < t < \delta +\delta_1 \;, \\
1 & \delta+\delta_1   \leq t \leq T \;.
\end{array}
\right . \;.
\end{align*}
Let $\kappa_n(t)= (1/n) \, \kappa(t)$, $n\ge 1$, and set
$\pi^n(t,dx)=u^n(t,x)\, dx$ where
$u_t^n=w_t+P^{(D)}_{\kappa_n(t)}[u_t-w_t]$ and
\begin{equation*}
w_t(x) \;=\;  
\frac {u_t(1)+u_t(-1)} {2} \,+\, \frac{u_t(1)-u_t(-1)}{2} \, x \;.
\end{equation*}
Then, for each $n \geq 1$, $\pi^n(t,dx)=u^n(t,x)dx$ belongs to
$\Pi_\gamma$ and
$\limsup_n I_{[0,T]} (\pi^n|\gamma) \leq I_{[0,T]} (\pi|\gamma )$.
\end{lem}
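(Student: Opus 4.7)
The plan is to separately verify (i) that each $\pi^n$ belongs to $\Pi_\gamma$ and (ii) the limsup inequality. For (i), the critical observation is $w_t(\pm 1) = u_t(\pm 1)$, so $u_t - w_t$ vanishes at the endpoints; the Dirichlet semigroup $P^{(D)}_s$ acts on it preserving this vanishing, and consequently $u^n_t(\pm 1) = u_t(\pm 1)$. Condition (a) of Definition \ref{d04} with $\mathfrak{t} = \delta$ is immediate from $\kappa_n \equiv 0$ on $[0,\delta]$, which gives $u^n_t = u_t = \rho_t$ there. For $t \in (\delta,T]$ we have $\kappa_n(t) > 0$, so the smoothing of $P^{(D)}_s$ ($L^2 \to C^\infty$ for positive time) combined with the $t$-smoothness of $u(\cdot,x)$ on $[\delta,T]$ (from $\mathcal{F}_3$) and of $\kappa$ makes $u^n$ jointly smooth on $(\delta,T]\times[-1,1]$. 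Since $\mathcal{F}_3 \subset \mathcal{F}_1$ yields $\epsilon_0 \le u \le 1-\epsilon_0$ on $[\delta,T]\times[-1,1]$, strong continuity of the Dirichlet semigroup plus equicontinuity in $t$ of $u_t - w_t$ (Lemma \ref{l04}) gives $\|u^n - u\|_\infty \to 0$ uniformly on $[\delta,T]\times[-1,1]$, so $u^n$ is also bounded away from $\{0,1\}$ for all sufficiently large $n$ (small values of $n$ can be replaced by $\rho$, since only the $\limsup$ is at stake).

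For (ii), Corollaries \ref{cor: separtion I} and \ref{cor: I hydro nul} reduce matters to $\limsup_n I_{[\delta,T]}(\pi^n) \le I_{[\delta,T]}(\pi)$, because both $u^n$ and $u$ agree with $\rho$ on $[0,\delta]$. On $[\delta,T]$ both functions satisfy (for $n$ large) the hypotheses of Proposition \ref{p04}, so $I_{[\delta,T]} = I^{(a)}_{[\delta,T]} + I^{(b)}_{[\delta,T]}$, and I treat the two pieces separately. For the boundary term $I^{(b)}$, the boundary values are unchanged, so only the bulk quantities $\langle \partial_t u^n,\cdot\rangle$ and $\langle D(u^n)\nabla u^n, \nabla \Xi^n\rangle$ differ from their unperturbed counterparts. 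Writing $u^n - u = (P^{(D)}_{\kappa_n(t)} - \mathrm{Id})[u-w]$, the strong continuity of the Dirichlet semigroup on $L^2$ together with the energy bound of Lemma \ref{l05} (as a dominating function) yields $\nabla u^n \to \nabla u$ in $L^2([\delta,T]\times[-1,1])$; a similar argument, splitting the integration into the sub-intervals $[\delta,\delta+\delta_1]$ and $[\delta+\delta_1,T]$ according to whether $\kappa_n$ varies or is constant, gives the corresponding convergence for $\partial_t u^n \to \partial_t u$. Continuity of $\Phi_t$ and the exponential bound of Proposition \ref{p03} then conclude via dominated convergence.

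For the bulk term $I^{(a)}$, the joint smoothness of $u^n$ allows an integration by parts in the duality relation of Lemma \ref{l01}, identifying
\begin{equation*}
P^n_t(x) \;=\; -\int_{-1}^x \partial_t u^n_t(y)\, dy \,+\, c^n(t),
\end{equation*}
where $c^n(t)$ is determined by $\langle P^n_t/\sigma(u^n_t)\rangle = 0$; an analogous formula holds for $P$ in the limit. The convergence $\partial_t u^n \to \partial_t u$ in $L^2$ and $u^n \to u$ uniformly, combined with uniform control on $\sigma(u^n)^{-1}$ from the separation away from $\{0,1\}$, yield $P^n \to P$ and $c^n \to c$; propagating this through $M^n = P^n + D(u^n)\nabla u^n$ and the remainder term $R^n$ gives $I^{(a)}_{[\delta,T]}(\pi^n) \to I^{(a)}_{[\delta,T]}(\pi)$. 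The principal obstacle is precisely this identification and convergence of $P^n$: without the joint spatial-temporal regularity engineered by the Dirichlet regularization, the dual function $P^n$ is defined only implicitly via a duality relation and cannot be directly compared with $P$; the construction of $\kappa_n$ is calibrated to supply exactly this smoothness while preserving boundary values and collapsing to the identity as $n \to \infty$.
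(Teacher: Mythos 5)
Your overall strategy --- reduce to the interval $[\delta,T]$ via Corollaries \ref{cor: separtion I} and \ref{cor: I hydro nul}, apply there the decomposition of Proposition \ref{p04}, and pass to the limit in each piece using the smoothing and strong continuity of the Dirichlet semigroup --- is exactly the route the paper intends (it defers to Lemmata 5.6 and 5.7 of \cite{fgln} and to Proposition \ref{p04}). However, one step is justified by an argument that does not work as written: the verification that $\pi^n$ belongs to $\Pi_\gamma$. You deduce $\epsilon\le u^n\le 1-\epsilon$ from a claimed uniform convergence $\|u^n-u\|_\infty\to0$ on $[\delta,T]\times[-1,1]$, invoking Lemma \ref{l04}. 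That lemma gives equicontinuity of $t\mapsto\<\pi_t,g\>$, not a spatial modulus of continuity of the family $\{u_t-w_t\}_{t\in[\delta,T]}$; membership in $\mc F_3$ only provides $\int_\delta^T\|\nabla u_t\|^2_{L^2}\,dt<\infty$, so this family need not be equicontinuous and $P^{(D)}_{\kappa_n(t)}$ need not converge to the identity uniformly over $t$. Moreover the statement asserts $\pi^n\in\Pi_\gamma$ for \emph{every} $n\ge1$, not only for large $n$. The correct (and simpler) argument is a maximum principle: for fixed $t$, the function $s\mapsto w_t+P^{(D)}_s[u_t-w_t]$ solves the heat equation (because $\Delta w_t=0$) with boundary data $u_t(\pm1)\in[\epsilon,1-\epsilon]$ and initial datum $u_t\in[\epsilon,1-\epsilon]$, whence $\epsilon\le u^n_t\le1-\epsilon$ on $[\delta,T]$ for every $n$, with the same $\epsilon$ as for $u$.

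A second, smaller gap: the convergence $\partial_t u^n\to\partial_t u$ in $L^2(\Omega_T)$ presupposes $\partial_t u\in L^2(\Omega_T)$, which is neither part of the definition of $\mc F_3$ nor a direct consequence of $I_{[0,T]}(\pi)<\infty$; what is controlled is $P\in L^2(\sigma(u)^{-1})$ with $\nabla P_t=-\,\partial_t u_t$ (Lemma \ref{l01}), i.e.\ an antiderivative in $x$ of $\partial_t u_t$. The comparison should therefore be carried out at the level of $P^n$, $M^n$ and of the integrated boundary quantities $a^n(t)$, $b^n(t)$ of Proposition \ref{p04}, which involve $\partial_t u^n_t$ only tested against $\Xi^n_t$, $1-\Xi^n_t$, following the scheme of Lemmas \ref{l06} and \ref{lem: aux2 densite F3}. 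In addition, on the plateau $[\delta,\delta+\delta_1]$ the time derivative of $u^n$ is $\kappa'_n(t)\,\Delta P^{(D)}_{\kappa_n(t)}[u_\delta-w_\delta]$, which must be controlled through a bound of the type $\|\Delta P^{(D)}_s f\|_{L^2}\le C\,s^{-1/2}\,\|\nabla f\|_{L^2}$ together with $\kappa'_n=\kappa'/n$ --- this is precisely why the $\mc F_2$ plateau was inserted --- a point your sketch only alludes to when ``splitting the integration into sub-intervals''. With these two repairs the argument is the intended one.
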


\begin{proof}[Proof of Proposition \ref{p05}]
Since $\pi^n$ converges to $\pi$, the assertion of the proposition
follows from the previous lemma.
\end{proof}

We conclude this section providing an explicit formula for the rate
functions of trajectories in $\Pi_\gamma$. The proof does not require
the density profile $\gamma$ to satisfy the hypotheses of Theorem
\ref{t02}. 



\begin{proof}[Proof of Proposition \ref{l09b}]
By Corollary \ref{cor: separtion I} and Corollary \ref{cor: I hydro
nul}, $I_{[0,T]}(\pi|\gamma)=I_{[\delta ,T]}(\pi)$. The rest of the
argument is straightforward from the variational formula for
$I_{[\delta ,T]}(\pi)$. We refer to the proof of Proposition 2.6 on
\cite{fgln}.
\end{proof}

\section{The large deviations principle}
\label{sec7}

The proof of the large deviations principle follows the one presented
in \cite{blm, flm}. It differes slightly from the original one
\cite{kl} as the functional $J_H(\cdot)$ is set to be $+\infty$ on
paths with infinite energy.

The proof of the lower and upper bounds do not require
$\gamma$ to satisfy the regularity conditions stated in Theorem
\ref{t02}. It is only at the end of the proof of the lower bound,
where we use the $I_{[0,T]}$-density of the set $\Pi_\gamma$, that
these hypotheses are needed. See Remark \ref{rm3}. 

The proof of the upper bound is identical to the one for
one-dimensional symmetric exclusion processes in weak contact with
reservoirs \cite{fgln} which relies on results presented in
\cite{flm}. Full details are given in Section 3.4 in \cite{these}.

\subsection*{Lower bound}

The proof of the lower bound relies on the hydrodynamic limit of
weakly asymmetric dynamics and on the
$I_{[0,T]}(\,\cdot\,|\gamma)$-density of the set $\Pi_\gamma$
introduced in the previous section.

Fix $0<t_0<T$, a profile $\gamma\colon [-1,1] \to [0,1]$, and a
function $H$ in $C([0,T] \times [-1,1])$ such that $H=0$ in
$[0,t_0] \times [-1,1]$ and $H \in C^{1,2}([t_0,T] \times [-1,1])$.
The concept of weak solutions, introduced in Definition \ref{d01} for
the hydrodynamic equation, can be extended to the equation
\begin{equation}
\label{c03}
\left\{
\begin{aligned}
& \partial_t u_t  - \nabla[D(u_t)\, \nabla u_t]
= 2 \nabla [ \sigma(u_t)\nabla H_t]
\quad\text{in}\;\; [0,T]\times (-1,1)\;,  \\
& D(u_t) \nabla u_t - 2 \sigma(u_t) \nabla H_t
= -u_t \, (1-{\mf b}) \, e^{-H_t}+{\mf b}\, [1-u_t]\,
e^{H_t} \quad\text{in}\;\; [0,T] \times \{1\} \;, \\
& D(u_t) \nabla u_t - 2 \sigma(u_t) \nabla H_t
= u_t \, (1-{\mf a}) \, e^{-H_t} - {\mf a}\, [1-u_t]\,
e^{H_t} \quad\text{in}\;\; [0,T] \times \{-1\} \;,
\end{aligned}
\right.
\end{equation}
with initial condition $\gamma$.
Corollary \ref{t03} states that there exists a unique weak solution.

Fix a function $G$ in $C([0,T] \times [-1,1])$, and consider the
generator $L_N^G$ given by $L^G_N = L_{N,0}^G + L_{N,b}^G$, where
\begin{equation*}
(L_{N,0}^Gf)(\eta)=  N^2  \sum_{x \in \Lambda_N}
r_{x,x+1}(\eta)\, e^{- [\eta(x+1)-\eta(x)]\, [G_t(\frac{x+1}{N})-G_t(\frac{x}{N})]}
[\, f(\eta^{x,x+1})-f(\eta)\,]\;, 
\end{equation*}
and the sum is carried over all $x\in \bb Z$ such that
$\{x , x+1\} \subset \Lambda_N$. On the other hand,
\begin{equation*}
\begin{aligned}
(L_{N,b}^Gf)(\eta) \, & =\, N\, r_L(\eta) \,
e^{-\, [1\, -2\eta(-N+1)\,]\, G_t(-1+\frac{1}{N})}\, [\,
f(\eta^{-N+1})-f(\eta)\,] \\
& +\, N\, r_R(\eta) \,
e^{-\, [1\, -2\eta(N-1)\,]\, G_t(1-\frac{1}{N})}\, [\, f(\eta^{N-1})-f(\eta)\,]\;,
\end{aligned}
\end{equation*}
These generators are time-dependent, though this dependence does not
appear in the notation.

For a probability measure $\mu$ on $\mf S_N$, denote by
$\mathbb{P}_{\mu}^{G,N}$ the probability measure on
$D([0,T], \mf S_N)$ induced by the Markov chain whose generator is
$L^G_N$ starting from $\mu$. Let
$\mathbb{Q}_{\mu}^{G,N} = \mathbb{P}_{\mu}^{G,N} (\Pi^N)^{-1}$.  The
next result provides a law of large numbers for the empirical measure
for weakly asymmetric gradient exclusion processes in mild contact
with reservoirs.

\begin{proposition}
\label{pf01}
Fix $0<t_0<T$, a profile $\gamma\colon [-1,1] \to [0,1]$, and a
function $H$ in $C([0,T] \times [-1,1])$ such that $H=0$ in
$[0,t_0] \times [-1,1]$ and $H \in C^{1,2}([t_0,T] \times
[-1,1])$. Let $(\mu_N:N\ge 1)$ be a sequence of probability measures
on $\mf S_N$ associated to $\gamma$.  Then, the sequence of
probability measures $(\mathbb{Q}_{\mu_N}^{H,N} : N \in \mathbb{N})$
converges weakly to the probability measure $\mathbb{Q}$ concentred on
the trajectory $\pi(t,dx)=u(t,x)dx$ where $u$ is the unique weak
solution of equation \eqref{c03} with initial condition $\gamma$.
\end{proposition}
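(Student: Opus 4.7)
The plan is to follow the classical three-step scheme for hydrodynamic limits—tightness, characterization of limit points as weak solutions of \eqref{c03}, and uniqueness. Since $H$ vanishes on $[0,t_0]\times[-1,1]$, on that initial interval $L^H_N$ coincides with $L_N$ and Theorem \ref{t01} already identifies the limit; the analysis thus concentrates on $[t_0,T]$.

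First, I would establish tightness of $(\mathbb{Q}^{H,N}_{\mu_N})_{N\ge 1}$ in $D([0,T],\mc M)$ via Aldous' criterion, using the $L^2$-convergence to zero of the quadratic variation of the martingale $M_N^F(t) = \<\pi^N_t,F\> - \<\pi^N_0,F\> - \int_0^t L^H_N \<\pi^N_s,F\>\, ds$ for test functions $F\in C^2([-1,1])$, together with a uniform bound on $L^H_N\<\pi^N_s,F\>$. An analogue of the energy estimate of Lemma \ref{l03} is also required; since the Radon-Nikodym derivative $d\mathbb{P}^{H,N}_{\mu_N}/d\mathbb{P}^N_{\mu_N}$ is the exponential of an $O(N)$-martingale controlled by the boundedness of $H$ and its derivatives on $[t_0,T]\times[-1,1]$, this estimate transfers from the symmetric case at the cost of enlarging the constant $K_0$.

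Second, to identify limit points, I would compute $L^H_N\<\pi^N_s,F_s\>$ for $F\in C^{1,2}([0,T]\times[-1,1])$, handling bulk and boundary separately. In the bulk, the Taylor expansion
\begin{equation*}
e^{-[\eta(x+1)-\eta(x)][H_s(\frac{x+1}{N})-H_s(\frac{x}{N})]} \;=\; 1 \,-\, \tfrac{1}{N}[\eta(x+1)-\eta(x)]\,(\nabla H_s)(x/N) \,+\, O(1/N^2)
\end{equation*}
splits the contribution into two pieces: the constant term reproduces the symmetric computation of Section \ref{sec2} and, after the replacement lemmas, yields $\int \nabla[D(u)\nabla u]\,F$; the first-order term collects into $\frac{1}{N}\sum_x r_{x,x+1}(\eta)\,[\eta(x)-\eta(x+1)]^2\,(\nabla H)(x/N)(\nabla F)(x/N)$, and the one- and two-block estimates of \cite[Ch.~5]{kl}, combined with the identity $E_{\nu_u}\bigl[r\,(\eta(x)-\eta(x+1))^2\bigr] = 2\chi(u)D(u) = 2\sigma(u)$, produce the weakly asymmetric bulk term $2\int \sigma(u)\,\nabla H\,\nabla F$. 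At the boundary the exponential must be treated \emph{exactly} rather than expanded; using the identity
\begin{equation*}
r_L(\eta)\,e^{-[1-2\eta(-N+1)]H_s(-1+\frac{1}{N})} \;=\; \eta(-N+1)(1-\mf a)\,e^{H_s(-1+\frac{1}{N})} \,+\, [1-\eta(-N+1)]\,\mf a\,e^{-H_s(-1+\frac{1}{N})},
\end{equation*}
and its analogue at $x=1$, together with a one-block estimate replacing $\eta(\pm(N-1))$ by the boundary trace of the limit density, one recovers the nonlinear Robin contributions of \eqref{c03}. Assembling the pieces and summing by parts yields, for any limit point, the weak formulation of \eqref{c03}.

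Third, uniqueness of weak solutions of \eqref{c03} is supplied by the extension of Corollary \ref{t03} announced immediately before the proposition, and the initial condition $u(0,\cdot) = \gamma$ is inherited from $\mu_N$ being associated to $\gamma$. The main obstacle is the boundary analysis: unlike the bulk perturbation, which is genuinely weak and controlled by a Taylor expansion, the boundary generator $L^H_{N,b}$ is tilted and produces nonlinear exponential Robin conditions rather than a vanishing correction, so one must carefully justify passing from $\eta(\pm(N-1))$ to the trace $u(s,\pm 1)$ of the limit profile—which exists thanks to the continuous representative of $\mc H^1$-functions once the energy estimate is in place. Everything else parallels the proof of Theorem \ref{t01} presented in Section \ref{sec3}.
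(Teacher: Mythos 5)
Your proposal is correct and follows essentially the route the paper intends: the paper gives no separate argument for Proposition \ref{pf01}, stating only that the proof is "similar to the one presented in Section \ref{sec3}" (i.e.\ adapt the tightness, replacement and energy estimates of Theorem \ref{t01} to the tilted generator, identify limit points with the weak formulation of \eqref{c03}, and invoke the uniqueness of Corollary \ref{t03}), which is precisely the scheme you spell out, including the correct treatment of the bulk tilt by Taylor expansion with $E_{\nu_u}[r\,(\eta(x)-\eta(x+1))^2]=2\sigma(u)$ and the exact (non-expanded) handling of the exponential boundary rates.
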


The proof of this result is similar to the one presented in Section
\ref{sec3}, and left to the reader.  For two probability measures
$\mathbb{P}$, $\mathbb{Q}$ on $D([0,T], \mf S_N)$, denote by
$\mathcal{H}_N(\mathbb{P}|\mathbb{Q})$ the relative entropy of $\bb P$
with respect to $\bb Q$:
\begin{equation*}
\mathcal{H}_N(\mathbb{P}|\mathbb{Q}) = \int \log \frac{d\bb P}{d\bb Q}\,
d\bb P\;.
\end{equation*}

Next result follows from the previous proposition and Proposition
\ref{l09b}.

\begin{lem}
\label{lf01}
Fix a density profile $\gamma\colon [-1,1] \to [0,1]$, and let
$(\eta^N:N\ge 1)$ be a sequence of configurations on $\mf S_N$
associated to $\gamma$. Under the hypotheses of the previous
proposition on $H$,
$\underset{N \to \infty}{\lim} \frac{1}{N}
\mathcal{H}(\mathbb{P}_{\eta^N}^{H,N}|\mathbb{P}_{\eta^N})=I_{[0,T]}(u|\gamma)$
where $u(t,x)$ is the unique weak solution of equation (\ref{c03}).
\end{lem}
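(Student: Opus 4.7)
The plan is to combine three ingredients: a Girsanov-type formula for the Radon--Nikodym derivative, the hydrodynamic limit of the tilted dynamics from Proposition~\ref{pf01}, and the explicit formula for the rate function on $\Pi_\gamma$ provided by Proposition~\ref{l09b}.

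I would first apply the exponential-martingale (Feynman--Kac) identity to the positive multiplicative functional $F_N^H(t,\eta) := \exp\{N\langle \pi^N(\eta), H_t\rangle\}$ to obtain
\begin{equation*}
\frac{d\mathbb{P}^{H,N}_{\eta^N}}{d\mathbb{P}_{\eta^N}}\bigg|_{\mathcal{F}_T}
= \exp\Big\{ N[\langle \pi^N_T, H_T\rangle - \langle \pi^N_0, H_0\rangle] - \int_0^T e^{-NH_s}(\partial_s + L_N)e^{NH_s}(\eta_s)\,ds\Big\}.
\end{equation*}
Using the gradient condition \eqref{14}, two summations by parts and a Taylor expansion of $H$ (smooth on $[t_0,T]$, zero on $[0,t_0]$) to absorb the prefactor $N^2$ in the bulk generator, together with the factor $1/N$ against the boundary generator, and invoking the standard replacement lemmata of Section~\ref{sec3} transported to the tilted measure, I expect
\begin{equation*}
\frac{1}{N}\log\frac{d\mathbb{P}^{H,N}_{\eta^N}}{d\mathbb{P}_{\eta^N}} \;=\; J_H(\pi^N) \;+\; o_N(1)
\end{equation*}
in $L^1(\mathbb{P}^{H,N}_{\eta^N})$, where $J_H$ is the functional appearing in the definition of $I_{[0,T]}$. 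Proposition~\ref{pf01} then gives that under $\mathbb{P}^{H,N}_{\eta^N}$ the sequence $\pi^N$ converges in probability to $\pi(t,dx)=u(t,x)\,dx$ with $u$ the unique weak solution of \eqref{c03}; uniform integrability follows from Lemma~\ref{l03} adapted to the tilted dynamics. Taking expectation with respect to $\mathbb{P}^{H,N}_{\eta^N}$ and passing to the limit then yields
\begin{equation*}
\lim_{N\to\infty}\frac{1}{N}\mathcal{H}(\mathbb{P}^{H,N}_{\eta^N}|\mathbb{P}_{\eta^N}) \;=\; J_H(u).
\end{equation*}

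Finally, I would identify $J_H(u)$ with $I_{[0,T]}(u|\gamma)$ via Proposition~\ref{l09b}. Because $H\equiv 0$ on $[0,t_0]$, equation \eqref{c03} reduces to the hydrodynamic equation on that interval, so $u_t=\rho_t$ there; classical parabolic regularity on $[t_0,T]$ for the quasilinear problem \eqref{c03} with smooth data $H$ places $u$ in $\Pi_\gamma$. A direct inspection shows that the boundary data of \eqref{c03} match exactly $\pm\mathfrak{p}_{\mathfrak{a},\mathfrak{b}}$ from \eqref{5-02}, so $H$ is the unique solution of \eqref{5-01b} associated to $u$; Proposition~\ref{l09b} then produces the explicit formula \eqref{5-03} for $I_{[0,T]}(u|\gamma)$, which is checked to equal $J_H(u)$ after one integration by parts of $L_0(H)$ against the weak formulation of \eqref{c03}.

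The main obstacle lies in the first step: controlling uniformly in $N$ the error in the Feynman--Kac expansion. The bulk contribution is handled as in the derivation of the hydrodynamic equation thanks to the gradient condition, but the delicate point is at the boundary, where the exponentials $e^{\pm H(\cdot,\pm 1)}$ in the tilted rates generate the nonlinear functional $B(u_t,H_t)$ of \eqref{11}, and where the replacement of boundary occupation variables by the macroscopic density has to be justified under the weakly asymmetric measure $\mathbb{P}^{H,N}_{\eta^N}$ rather than under the reversible reference measure.
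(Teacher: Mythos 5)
Your argument is correct and is essentially the proof the paper intends: the paper disposes of this lemma in one line by invoking Proposition \ref{pf01} and Proposition \ref{l09b}, which is precisely your combination of the Girsanov/exponential-martingale computation of the Radon--Nikodym derivative, the hydrodynamic limit of the tilted dynamics, and the identification of $\hat{J}_H(u)$ with $I_{[0,T]}(u|\gamma)$ through the elliptic equation \eqref{5-01b} and formula \eqref{5-03}. Two cosmetic points only: the quantity $\frac{1}{N}\log\frac{d\mathbb{P}^{H,N}_{\eta^N}}{d\mathbb{P}_{\eta^N}}$ should be expressed as a microscopic functional converging to $\hat{J}_H(u)$ (the literal $J_H(\pi^N)$ is $+\infty$ because $\pi^N$ is atomic), and uniform integrability needs no appeal to Lemma \ref{l03}: since $H$ is smooth and bounded, after the summations by parts this quantity is bounded by a deterministic constant $C(H)$ uniformly in $N$, so bounded convergence together with the convergence in $\mathbb{P}^{H,N}_{\eta^N}$-probability suffices.
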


Fix a density profile $\gamma\colon [-1,1]\to [0,1]$. Let
$\pi(t,dx) = u(t,x) dx$ be a trajectory in $\Pi_\gamma$. Recall from
the end of the previous section that for each $0<t\le T$, there exists
a unique solution, denoted by $H_t$, of the elliptic equation
\eqref{5-01b}.  Moreover the solution satisfies the hypotheses of
Proposition \ref{pf01} and Lemma \ref{lf01}.


Let $(\eta^N:N\ge 1)$ be a sequence of configurations on $\mf S_N$
associated to $\gamma$. The proof of Lemma 10.5.4 in \cite{kl} yields
that for every open set $\mc O \subset D([0,T], \mathcal{M})$,
\begin{align*}
\underset{N \to + \infty}{\mbox{ lim inf }} \frac{1}{N}
\log \mathbb{Q}_{\eta^N}(\mc O) \geq -\underset{\pi}
{\mbox{ inf }} I_{[0,T]}(\pi|\gamma) \;,
\end{align*}
where the infimum is carried over all trajectories $\pi(t,dx)$ in
$\Pi_\gamma \cap \mc O$. To complete the proof of the lower bound it
remains to recall that the set $\Pi_\gamma$ is
$I_{[0,T]}(\,\cdot\,|\gamma)$-dense. At this point, and only here, we
need $\gamma$ to fulfil the hypotheses of Theorem \ref{t02}.

\section{Uniqueness of weak solutions}
\label{sec8}

In this section, we prove uniqueness of weak solutions of the
hydrodynamic equation for weakly asymmetric dynamics. As many results
are standard, they are just stated, and we refer to Section 2.4 in \cite{these} for
detailed proofs.

For $\delta>0$ small, denote by $\varphi_{\delta}$ the function defined by
\begin{align*}
\varphi_{\delta}(\cdot)  \,:=\,  \frac{x^2}{2\delta}\, \chi_{[-\delta,
\delta]} (\cdot) \,+\, [\, |x|-(\delta/2)\,]
\chi_{[-\delta, \delta]^c} (\cdot) \;.
\end{align*}
Fix $H \in C([0,T]\times [-1,1])$, for which there exists $0<t_H<T$
such that $H=0$ on $[0,t_H]\times [-1,1]$ and
$H \in C^{1,2}([t_H,T]\times [-1,1])$.  The proofs of next results are
similar to the ones of \cite[Section 7]{flm}. Details can be found in Section 3.5 in 
\cite{these}.

\begin{lem}
\label{lem: varphi_detla(u)}
Fix two initial profiles $\rho_i \colon \Omega \to [0,1]$, $i=1$,
$2$. Let $u^{(i)}$ be a weak solution of the hydrodynamic equation
(\ref{c03}) with initial condition $\rho_i$. Then, for all
$0< t \leq T$,
\begin{align*}
\int_{-1}^1 \varphi_{\delta}(w_t)\, dx
- \int_{-1}^1 \varphi_{\delta}(w_0)\, dx
\, =\, &-\, \int_0^t [\, {\mf b}\, e^{H_s(1)}+(1-{\mf b})\,
e^{-H_s(1)}\, ]
\, w_s(1) \, \varphi_{\delta}'(w_s(1))\, ds \\
\,& -\, \int_0^t [\, {\mf a}\, e^{H_s(-1)}
+ (1-{\mf a}) \, e^{-H_s(-1)}]\,
w_s(-1)\, \varphi_{\delta}'(w_s(-1)) \, ds \\
& -\, \int_0^t \int_{-1}^1 \big\{\, D(u^{(2)}_s) \, \nabla u^{(2)}_s
- D(u^{(1)}_s) \, \nabla u^{(1)}_s\,\big\}
\nabla w_s \, \varphi_{\delta}''(w_s ) \, dx \, ds \\
& + \, 2 \, \int_0^t \int_{-1}^1 \big\{\, \sigma (u^{(2)}_s)
- \sigma(u^{(1)}_s) \, \big\} \, \nabla H_s\,
\nabla w_s \, \varphi_{\delta}''(w_s ) \, dx \, ds \;, 
\end{align*}
where $w=u^{(2)}-u^{(1)}$.
\end{lem}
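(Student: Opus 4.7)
The plan is to subtract the weak formulations of \eqref{c03} for $u^{(1)}$ and $u^{(2)}$ and test the resulting identity against $\varphi_{\delta}'(w_s)$. Using the paper's convention for weak solutions adapted to \eqref{c03}, for any admissible test function $\phi$ one obtains
\begin{equation*}
\< w_t, \phi_t\> - \< w_0, \phi_0\> \;=\; \int_0^t \< w_s, \partial_s \phi_s\>\, ds
\;-\; \int_0^t \< \Delta_D(s), \nabla \phi_s\>\, ds
\;+\; 2\int_0^t \< \Delta_\sigma(s)\, \nabla H_s, \nabla \phi_s\>\, ds
\;+\; \mc B(\phi)\;,
\end{equation*}
where $\Delta_D(s) = D(u^{(2)}_s)\nabla u^{(2)}_s - D(u^{(1)}_s)\nabla u^{(1)}_s$, $\Delta_\sigma(s) = \sigma(u^{(2)}_s) - \sigma(u^{(1)}_s)$, and $\mc B(\phi)$ collects the boundary contributions produced by the Robin conditions. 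Formally choosing $\phi_s = \varphi_\delta'(w_s)$, the chain rule yields $\nabla \phi_s = \varphi_\delta''(w_s)\, \nabla w_s$, while the time-derivative terms combine into $(d/ds)\int_{-1}^1 \varphi_\delta(w_s)\, dx$. This directly produces the left-hand side of the lemma and the two bulk integrals on the right-hand side.

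The boundary contributions come from evaluating $\varphi_\delta'(w)$ against the Robin conditions of \eqref{c03}. At $x=1$, the right-hand side of the condition can be rewritten as an affine function of $u$, namely $-u\,[\mf b\, e^{H} + (1-\mf b)\, e^{-H}] + \mf b\, e^{H}$, so subtracting the two boundary conditions yields
\begin{equation*}
\big[\, \Delta_D - 2\, \Delta_\sigma\, \nabla H\,\big]_{x=1}
\;=\; -\,\big[\, \mf b\, e^{H(1)} + (1-\mf b)\, e^{-H(1)}\,\big]\, w(1)\;.
\end{equation*}
Multiplying by $\varphi_\delta'(w(1))$ reproduces exactly the first boundary term of the lemma, and the symmetric computation at $x=-1$ (with $\mf a$ in place of $\mf b$ and the opposite sign from the evaluation convention $[\,\cdot\,]_{-1}^{1}$) reproduces the second boundary term.

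The main obstacle is regularity: $\varphi_\delta$ is merely $C^{1,1}$, and $w$ has the regularity of a weak solution only (in particular, $w \in L^2([0,T], \mc H^1)$), so $\varphi_\delta'(w_s)$ is not an admissible test function in $C^{1,2}([t_H,T]\times [-1,1])$. To make the computation rigorous, I would first replace $\varphi_\delta$ by a sequence of smooth convex approximations $\psi_\eta$ with $\psi_\eta \to \varphi_\delta$, $\psi'_\eta \to \varphi_\delta'$ uniformly, and $\psi''_\eta \to \varphi_\delta''$ boundedly almost everywhere, and then mollify $w$ in space-time using a symmetric kernel to obtain smooth admissible test functions. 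The bulk integrals pass to the limit by dominated convergence, using the $L^2$ bound on $\nabla w$ together with the boundedness and continuity of $D$, $\sigma$; the boundary terms pass to the limit using continuity of the trace maps $u \mapsto u(\cdot,\pm 1)$ from $\mc H^1$ to $L^2$ on the time slices. Sending the regularization parameters to zero in the order (space mollification, then $\eta \to 0$) then delivers the stated identity; the points $\pm\delta$ where $\varphi_\delta''$ jumps cause no trouble because they form a Lebesgue-null set in the relevant integrations.
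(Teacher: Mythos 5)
Your proposal is correct and follows essentially the same route as the paper, which itself only sketches this lemma by referring to Section 7 of \cite{flm} (and the thesis \cite{these}): subtract the two weak formulations of \eqref{c03}, test with $\varphi_{\delta}'(w_s)$ after smoothing $\varphi_\delta$ and mollifying $w$ with a symmetric kernel, and linearize the Robin boundary data in $u$ to produce the two boundary terms. Your bookkeeping of the bulk terms, the boundary coefficients $\mf b\, e^{H}+(1-\mf b)e^{-H}$, $\mf a\, e^{H}+(1-\mf a)e^{-H}$, and the signs from the evaluation at $x=\pm 1$ matches the stated identity, and the regularization scheme you outline is the standard one used in the cited reference.
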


\begin{cor}
\label{t03}
Fix two initial profiles $\rho_i \colon \Omega \to [0,1]$, $i=1$,
$2$. Let $u^{(i)}$ be a weak solution of the hydrodynamic equation
(\ref{c03}) with initial condition $\rho_i$. Then,
$t\mapsto \| u^{(2)}_t-u^{(1)}_t \|_1$ is non-increasing.  In
particular, there is at most one weak solution of the parabolic
equations (\ref{10}) and \eqref{c03}.  Moreover, if $H=0$, then
\begin{equation*}
\int_0^{\infty} \| u^{(2)}_t-u^{(1)}_t \|_1^2 \,<\,  \infty \;.
\end{equation*}
\end{cor}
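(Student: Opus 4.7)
The backbone is a Kruzhkov-type $L^1$-contraction built from Lemma~\ref{lem: varphi_detla(u)} by passing to the limit $\delta \to 0$. I start from the identity in the lemma and examine each of its four terms. Since $\varphi_\delta(x) \to |x|$ and $x\,\varphi_\delta'(x) \to |x|$ with uniform bounds, the left-hand side converges to $\|w_t\|_1 - \|w_0\|_1$ and the two boundary terms converge to
\[
-\int_0^t [\mf b\, e^{H_s(1)} + (1-\mf b)\, e^{-H_s(1)}]\,|w_s(1)|\,ds \;-\; \int_0^t [\mf a\, e^{H_s(-1)} + (1-\mf a)\, e^{-H_s(-1)}]\,|w_s(-1)|\,ds,
\]
which is non-positive because the bracketed quantities are strictly positive. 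The two remaining bulk terms are the heart of the argument.

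The crucial observation is that the gradient structure $D(u)\nabla u = \nabla P(u)$ with $P(\rho) = \rho + a\rho^2$ gives
\[
D(u^{(2)})\nabla u^{(2)} - D(u^{(1)})\nabla u^{(1)} \;=\; \nabla[\bar D \, w], \qquad \bar D \;:=\; 1 + a(u^{(1)} + u^{(2)}),
\]
and $\bar D$ is bounded below by $\min\{1, 1+2a\} > 0$ under the standing hypothesis $a > -1/2$. Expanding, $[D(u^{(2)})\nabla u^{(2)} - D(u^{(1)})\nabla u^{(1)}] \cdot \nabla w = \bar D (\nabla w)^2 + w\,\nabla w \cdot \nabla \bar D$. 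With the overall minus sign in the lemma, the first summand yields a non-positive contribution, which I simply discard, while the second summand vanishes in the limit $\delta \to 0$: the factor $w\,\varphi_\delta''(w) = \delta^{-1} w\,\mathbb{1}_{|w|\leq \delta}$ is dominated by $1$ and tends pointwise to $0$, whereas $|\nabla \bar D|\,|\nabla w|$ is integrable on $[0,t]\times \Omega$ by Cauchy--Schwarz using $u^{(i)} \in L^2([0,T], \mathcal H^1)$, so dominated convergence applies. The fourth bulk term is handled identically after writing $\sigma(u^{(2)}) - \sigma(u^{(1)}) = \tilde\sigma\,w$ with $\tilde\sigma$ bounded, and using that $\nabla H$ is bounded on the support of $H$. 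Collecting everything gives the monotonicity $\|w_t\|_1 \leq \|w_0\|_1$, from which uniqueness for \eqref{10} and \eqref{c03} follows immediately by taking $\rho_1 = \rho_2$.

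For the last assertion, specialize to $H = 0$: the boundary coefficients collapse to $1$, so the contraction sharpens to $\|w_t\|_1 + \int_0^t [|w_s(-1)| + |w_s(1)|]\,ds \leq \|w_0\|_1 \leq 2$, giving $\int_0^\infty |w_s(\pm 1)|\,ds < \infty$. To translate this boundary integrability into bulk $L^2$-in-time integrability, I combine the one-dimensional inequality $\|w\|_1 \leq 2(|w(-1)| + \|\nabla w\|_1)$, coming from the fundamental theorem of calculus, with $\|\nabla w\|_1^2 \leq 2\|\nabla w\|_2^2$. Squaring gives $\|w\|_1^2 \leq 8|w(-1)|^2 + 16\|\nabla w\|_2^2$; the boundary piece is integrable since $|w(\pm 1)| \leq 1$ and $\int_0^\infty |w_s(\pm 1)|\,ds < \infty$, while the gradient piece is controlled via a standard $L^2$ energy estimate. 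Concretely, multiplying $\partial_t w = \Delta(\bar D w)$ by $w$, integrating by parts against the Robin conditions $\nabla(\bar D w)|_1 = -w(1)$ and $\nabla(\bar D w)|_{-1} = w(-1)$, and absorbing the cross term $\int w\,\nabla w \cdot \nabla \bar D$ by Young's inequality into the positive dissipation $\int \bar D (\nabla w)^2$, yields $\int_0^\infty \|\nabla w\|_2^2\,dt < \infty$.

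I expect the main obstacle to be this last step, which requires recovering and exploiting the dissipation term $\bar D (\nabla w)^2$ that was simply thrown away in the $L^1$ analysis, and carefully handling the cross term $w\,\nabla w \cdot \nabla \bar D$ introduced by the $t$-dependent coefficient $\bar D$. In the $L^1$ argument that cross term was killed by the smallness of $w\,\varphi_\delta''(w)$, but here it must be absorbed via Cauchy--Schwarz into the positive dissipation, which is delicate because $\nabla \bar D$ involves $\nabla u^{(1)} + \nabla u^{(2)}$ with only $L^2([0,T], \mathcal H^1)$ regularity. The $L^1$-contraction and uniqueness themselves are routine Kruzhkov-type computations.
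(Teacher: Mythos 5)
Your first two steps are correct and are exactly the intended argument: pass to the limit $\delta\to 0$ in Lemma \ref{lem: varphi_detla(u)}, write $D(u^{(2)})\nabla u^{(2)}-D(u^{(1)})\nabla u^{(1)}=\nabla(\bar D\,w)$ with $\bar D=1+a(u^{(1)}+u^{(2)})\ge\min\{1,1+2a\}>0$, discard the non-positive dissipation, and kill the two cross terms by dominated convergence; this gives the monotonicity of $\|w_t\|_1$, uniqueness for \eqref{10} and \eqref{c03}, and, for $H=0$, the boundary integrability $\int_0^\infty\big(|w_s(-1)|+|w_s(1)|\big)\,ds\le\|w_0\|_1$.

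The gap is in the last step, where you reduce $\int_0^\infty\|w_t\|_1^2\,dt<\infty$ to $\int_0^\infty\|\nabla w_t\|_2^2\,dt<\infty$ and claim the latter from the $L^2$ energy estimate. After Young's inequality the cross term $\int w\,\nabla\bar D\cdot\nabla w$ leaves behind $\tfrac12\int w^2(\nabla\bar D)^2/\bar D$, with $(\nabla\bar D)^2\asymp(\nabla u^{(1)}+\nabla u^{(2)})^2$. Absorption into $\int\bar D(\nabla w)^2$ only disposes of the $(\nabla w)^2$ factor; nothing available controls the leftover on the infinite time horizon. Corollary \ref{cor: hydro ener fini} gives finite energy only on compact time intervals, and in fact $\int_0^\infty\|\nabla u^{(i)}_t\|_2^2\,dt=\infty$ generically, since the solutions relax to the stationary profile, whose gradient does not vanish when $\mf a\neq\mf b$; invoking smallness of $w$ instead would be circular, as no decay of $\|w_t\|_\infty$ has been established. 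So the inequality you need is both unproven by your argument and stronger than what the statement requires.

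The standard remedy (and the one behind the references the paper points to) is to run the quadratic estimate one derivative lower, so that the dissipation appears in $w$ itself and $\nabla u^{(i)}$ never enters. Set $W_t(x)=\int_{-1}^x w_t(y)\,dy$ and use it as test function in the weak formulation of the equation for $w$ (with $H=0$, the boundary fluxes of the difference are $-w_t(1)$ at $x=1$ and $w_t(-1)$ at $x=-1$). Since the flux difference in the bulk is $\nabla(\bar D w)$ and $\nabla W_t=w_t$, one integration by parts yields
\begin{equation*}
\frac{d}{dt}\,\|W_t\|_2^2 \;\le\; C_0\,\big(|w_t(-1)|+|w_t(1)|\big)\;-\;A\,\|w_t\|_1^2\,,
\qquad A=\min\{1,1+2a\}\,,
\end{equation*}
where the boundary contributions were bounded using $|W_t|\le\|w_t\|_1\le 2$ and $\bar D\le 1+2|a|$, and $\int\bar D\,w_t^2\ge A\|w_t\|_2^2\ge (A/2)\|w_t\|_1^2$ by Cauchy--Schwarz. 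Integrating in time and using the boundary integrability you already proved gives $\int_0^\infty\|w_t\|_1^2\,dt<\infty$ directly, with no need for control of $\int_0^\infty\|\nabla w_t\|_2^2\,dt$.
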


Denote by $\color{blue} C^{1+ \beta/2, 2+\beta}([0,T]\times [-1,1])$
the functions $H$ in $C^{1,2}([0,T]\times [-1,1])$ whose derivatives
$\partial^2_x H$, $\partial_t H$ are H\"older continuous in the $x$
variable with parameter $\beta$ and H\"older continuous in the $t$
variable with parameter $\beta/2$.

\begin{proposition}
\label{p07}
Fix an initial profile $\gamma\colon \Omega \to [0,1]$ in
$C^{2+\beta}([-1,1])$ for some $0<\beta<1$ and satisfying the boundary
conditions \eqref{fv-01}.  Let $u$ be the weak solution of the
hydrodynamic equation (\ref{c03}) with initial condition
$\gamma$. Then,

\begin{enumerate}
    
\item $u \in C^{1+\beta/2,2+\beta}([0,T] \times [-1,1])$

\item $0< u (t,x) <1$ for every $(t,x) \in ]0,T] \times [-1,1]$

\end{enumerate}

\noindent
In particular, for each $\delta >0$, there exists $\epsilon >0$ such
that $\epsilon \le u (t,x) \le 1-\epsilon$ for every
$(t,x) \in [\delta,T] \times [-1,1]$.
\end{proposition}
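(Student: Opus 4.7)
The plan is to split Proposition \ref{p07} into two parts: a H\"older regularity statement up to the parabolic boundary, and a strict two-sided pointwise bound. The first is established by iterating De~Giorgi--Nash--Moser and Schauder estimates for quasilinear parabolic equations with oblique boundary conditions; the second follows from the strong maximum principle together with Hopf's lemma at the lateral boundary, exploiting the specific sign structure of the Robin condition appearing in \eqref{c03}.

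For the regularity, I would rewrite \eqref{c03} in divergence form,
\[
\partial_t u \;=\; \nabla \bigl[\, D(u)\, \nabla u \;-\; 2\,\sigma(u)\, \nabla H\,\bigr]\;,
\]
with lateral conditions $D(u)\,\nabla u - 2\sigma(u)\,\nabla H = \Phi_\pm(u,H)$ at $x = \pm 1$, the nonlinearities $\Phi_\pm$ being smooth in both variables. Since $D(\rho)=1+2a\rho$ is bounded below away from $0$ on $[0,1]$ (because $a>-1/2$) and the weak solution $u$ takes values in $[0,1]$, the equation is uniformly parabolic on the range of $u$. A first application of De~Giorgi--Nash--Moser up to the boundary for oblique problems (e.g.\ Ladyzhenskaya--Solonnikov--Ural'tseva, Chapter V) yields $u \in C^{\alpha/2,\alpha}([0,T]\times[-1,1])$ for some $\alpha\in(0,1)$. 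The coefficients $D(u)$, $\sigma(u)$ and the boundary data $\Phi_\pm(u,H)$ are then H\"older continuous in $(t,x)$. Freezing them and invoking the classical Schauder theorem for linear parabolic problems with H\"older-continuous Robin data upgrades the regularity to $C^{1+\beta/2,2+\beta}$, provided the zeroth-order compatibility at the parabolic corners $(0,\pm 1)$ holds: this is exactly condition \eqref{fv-01}, imposed on $\gamma$. Because $H \equiv 0$ on $[0,t_H]$, no extra $H$-dependent compatibility is needed at $t=0$, and the exponent $2+\beta$ is dictated by the assumption $\gamma \in C^{2+\beta}$.

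For the strict bound $0 < u(t,x) < 1$ on $(0,T]\times[-1,1]$, I would argue by contradiction. Suppose $u(t_0,x_0) = 0$ with $t_0>0$. Freezing the nonlinearity at the classical solution, $u$ satisfies a uniformly parabolic linear equation with H\"older coefficients; the strong minimum principle excludes $x_0\in(-1,1)$. At $x_0 = -1$, Hopf's lemma forces $\partial_x u(t_0,-1) > 0$, whereas the boundary condition in \eqref{c03} evaluated at $u=0$ reads $D(0)\,\partial_x u(t_0,-1) = -\,\mf a\, e^{H(t_0,-1)} < 0$, a contradiction. The case $x_0 = 1$ is symmetric: Hopf yields $\partial_x u(t_0,1) < 0$, while the right-hand boundary condition at $u=0$ gives $D(0)\,\partial_x u(t_0,1) = \mf b\, e^{H(t_0,1)} > 0$. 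The upper bound $u<1$ is obtained by applying the same reasoning to $v := 1-u$; the signs of the right-hand sides of the two boundary conditions evaluated at $u=1$ again contradict Hopf's lemma. The uniform estimate $\epsilon\le u\le 1-\epsilon$ on $[\delta,T]\times[-1,1]$ follows by continuity and compactness.

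The main obstacle is bibliographical rather than conceptual: one must locate a Schauder estimate accommodating the genuinely nonlinear Robin condition of \eqref{c03}. My plan circumvents this via the iteration scheme above (first $C^{\alpha}$, then linearised Schauder). A secondary subtlety is the limited smoothness of $H$ at the junction $t=t_H$, where $\partial_t H$ may jump; this is resolved by running the regularity argument separately on $[0,t_H]$, where \eqref{c03} reduces to the autonomous hydrodynamic equation \eqref{10} with initial datum $\gamma$ satisfying \eqref{fv-01}, and on $[t_H,T]$, using $u(t_H,\cdot)\in C^{2+\beta}$ as a new initial datum automatically compatible with the continuous Robin data on the second interval.
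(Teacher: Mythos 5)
Your proposal is correct in substance and reaches the same conclusions, but the mechanism differs from the paper's on both points. For the regularity, the paper does not perform the two-step De~Giorgi--Schauder bootstrap you describe; instead it passes to $v=u-\gamma$, puts $v$ in the form (7.1)--(7.2) of Chapter V of Ladyzhenskaya--Solonnikov--Ural'tseva, and invokes Theorem~V.7.4 there directly. The technical obstruction the paper actually has to handle is that one of the structural hypotheses of that theorem (the second condition in their~(7.34)) fails for large negative values of~$v$; this is patched by truncating the coefficient $b$ below $v=-2$ and then verifying a posteriori, via $u\ge 0$ and $\gamma\le 1$, that $v>-3/2$ so the truncation is never active. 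Your bootstrap route is a legitimate alternative and may be cleaner for readers who prefer to stay inside the natural range $[0,1]$ from the outset, but be aware that the oblique De~Giorgi and Schauder theorems of LSU carry their own structural/growth hypotheses that you would still need to verify (or truncate around) just as the paper does; you do not address this. For the strict bound, you can dispense with Hopf's lemma at the boundary: since $\sigma(0)=\sigma(1)=0$, when $u$ hits $0$ or $1$ at $x=\pm 1$ the Robin condition in \eqref{c03} collapses to an explicit sign on $D\cdot\nabla u$ alone (the $\nabla H$ term disappears), and this immediately forces $u$ outside $[0,1]$ in a one-sided neighbourhood, contradicting the a~priori bound — this is the paper's argument, which avoids invoking Hopf and only uses the interior maximum principle (Protter--Weinberger, Theorem~3.3.5) to rule out interior touching. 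Your Hopf argument is equally valid but slightly heavier machinery. Your handling of the junction at $t=t_H$ and the role of the compatibility condition \eqref{fv-01} at the parabolic corners is consistent with the paper, and the final compactness deduction of the uniform bound on $[\delta,T]$ is the same in both.
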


\begin{proof}
The proof is based on \cite[Theorem V.7.4]{Ld}.  Fix an initial
profile $\gamma\colon \Omega \to [0,1]$ satisfying the hypothesis of
the proposition, and let $u$ be the weak solution of the hydrodynamic
equation (\ref{c03}) with initial condition $\gamma$.

Let $v= u-\gamma$. Then, $v$ satisfies equations (7.1), (7.2) of
Chapter V in \cite{Ld} for some coefficients $a(x,v)$,
$b(x,v,\nabla v)$ and $\psi (x,v)$, and initial condition
$v(0,\cdot) =0$. It can be checked that all the hypotheses of
\cite[Theorem V.7.4]{Ld} are fulfilled, but the second condition in
equation (7.34), which is violated for large negative values of $v$.

Modify $b$ for this condition to hold when $v$ takes large negative
values, say when $v<-2$. With this modifications all hypotheses of
\cite[Theorem V.7.4]{Ld} are fulfilled. According to \cite[Theorem
V.7.4]{Ld}, there exists a solution in $C^{1,2}(\bb R \times [-1,1])$
to (7.1), (7.2) with the modified $b$ and initial condition
$v(0,\cdot)=0$.

Let $t_0$ be the first time that $v$ attains $-2$. We claim that
$t_0=\infty$. Assume, by contradiction that this is not the case.  As
the solution starts from $v=0$ and $v$ is continuous, $t_0>0$.  In the
time interval $[0,t_0]$, $v >-2$. Hence, in this interval the modified
$b$ coincides with the original $b$. In particular, in the time
interval $[0,t_0]$, $u=v+\gamma$ is the solution of the hydrodynamic
equation with initial condition $\gamma$. Since $u$ is positive,
$v = u-\gamma>-\gamma >-3/2$ on $[0,t_0]$, in contradiction to the
fact that $v(t_0,x) = -2$ for some $x\in[-1,1]$.

Hence $v$ is a solution in $\bb R \times [-1,1]$ to the equation
(7.1), (7.2) with initial condition $v(0,\cdot)=0$ for the unmodified
coefficient $b$, and $u=v+\gamma$ is the solution of the hydrodynamic
equation with initial condition $\gamma$ on the same region. As
$v\in C^{1,2}(\bb R \times [-1,1])$ and $\gamma\in C^{2}([-1,1])$,
$u \in C^{1,2}(\bb R \times [-1,1])$, as claimed.

We claim that $u(t,\cdot) \in ]0,1[$ at the boundary for all
$t>0$. Indeed, if $u(t,-1)= 1$, say, then, by the boundary condition
$\nabla u (t,-1)>0$ so that $u(t,x)>1$ for some $x\in [-1,1[$, in
contradiction with the fact that $u$ takes value in the interval
$[0,1]$.  To complete the proof of (2), it remains to recall the
maximum principle Theorem 3.3.5 \cite{pw}.
\end{proof}

\begin{lem}
\label{lem: regularite H}
Fix a density profile $\gamma\colon [-1,1]\to [0,1]$ and $\pi$ in
$\Pi_\gamma$. For all $0 < t \leq T$, there exists a unique
solution $H_t$ in $C^2([-1,1])$ of the elliptic equation
\eqref{5-01b}. Moreover, $H_t=0$ for $0\le t\le \delta$, where
$\delta>0$ is the constant appearing in item (a) of the definition of
the set $\Pi_\gamma$, and $H \in C^{\infty}([\delta,T]\times [-1,1])$.
\end{lem}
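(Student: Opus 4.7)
The plan is to view \eqref{5-01b}, at each fixed $t$, as a one-dimensional two-point boundary value problem for $H(t,\cdot)$ with $u$ given, and to reduce existence and uniqueness to the solvability of a scalar monotone equation in a single real parameter. Regularity in $(t,x)$ will then follow from the implicit function theorem applied to that scalar equation.

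For the interval $[0,\delta]$, I first observe that item (a) of Definition \ref{d04} gives $u=\rho$, where $\rho$ is the weak solution of \eqref{10}. Since $\mf p_\varrho(a,0)=\varrho-a$, plugging $H\equiv 0$ into \eqref{5-01b} reproduces \eqref{10} exactly. Thus $H\equiv 0$ solves \eqref{5-01b} on $[0,\delta]$, and uniqueness (shown next) forces $H_t=0$ there.

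For $t>\delta$, I would fix $t$ and integrate the bulk equation in $x$. Writing $v=u(t,\cdot)$, $g=\partial_t u(t,\cdot)$ and $G(x)=\int_{-1}^x g(y)\,dy$, the bulk equation $2\partial_x(\sigma(v)\partial_x H)=\partial_x(D(v)\partial_x v)-g$ integrates once to
\begin{equation*}
2\,\sigma(v(x))\,\partial_x H(t,x)\;=\;D(v(x))\,\partial_x v(x)\,-\,G(x)\,+\,\mf p_{\mf a}\bigl(v(-1),H(t,-1)\bigr)\;,
\end{equation*}
the constant of integration being fixed by the left boundary condition in \eqref{5-01b}. By item (b) of Definition \ref{d04}, $\epsilon\le v\le 1-\epsilon$, so $\sigma(v)$ is bounded below and a second integration from $x=-1$ expresses $H(t,x)$ as an explicit affine function of the single scalar $K:=H(t,-1)$, with coefficients depending smoothly on $v$, $\partial_x v$ and $G$.

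Evaluating the previous identity at $x=1$ and combining with the right boundary condition in \eqref{5-01b} reduces the problem to finding $K\in\bb R$ such that
\begin{equation*}
\Theta(K)\;:=\;\mf p_{\mf a}\bigl(v(-1),K\bigr)\,+\,\mf p_{\mf b}\bigl(v(1),H(t,1;K)\bigr)\;=\;G(1)\;.
\end{equation*}
Since $\partial_M \mf p_\varrho(a,M)=(1-a)\varrho e^M+a(1-\varrho)e^{-M}>0$ whenever $a,\varrho\in(0,1)$, and $K\mapsto H(t,1;K)$ is affine with positive slope, $\Theta$ is strictly increasing, continuous, and surjective onto $\bb R$. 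The intermediate value theorem then produces a unique $K=K(t)$, yielding existence and uniqueness of $H_t\in C^2([-1,1])$. For smoothness, $u\in C^\infty([\delta,T]\times[-1,1])$ makes the data entering $\Theta$ smooth in $t$, and $\partial_K\Theta>0$ lets the implicit function theorem give $K\in C^\infty([\delta,T])$, whence $H\in C^\infty([\delta,T]\times[-1,1])$. The main obstacle is the monotonicity analysis in this last step, which crucially uses that $v$ is bounded away from $0$ and $1$; without this, $\sigma(v)$ could vanish and the reduction to the scalar equation would break down.
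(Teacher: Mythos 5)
Your proposal is correct in substance and, for existence and smoothness, follows essentially the paper's route: on $[0,\delta]$ the identity $\mf p_{\varrho}(a,0)=\varrho-a$ shows that $H\equiv 0$ solves \eqref{5-01b}, and on $[\delta,T]$, where $u$ is smooth and bounded away from $0$ and $1$, one integrates the bulk equation in $x$ and obtains an explicit expression for $H_t$, from which joint smoothness follows (the paper's proof says exactly this, "one can derive an explicit expression for the solution $H_t$, and check that it is smooth in space and time"; your implicit-function-theorem step on the shooting map $\Theta$ is a clean way to make the $t$-smoothness precise). Where you genuinely diverge is uniqueness. The paper subtracts the bulk equations for two solutions, multiplies by the difference, integrates by parts, and uses the boundary conditions together with the strict monotonicity of $M\mapsto \mf p_{\varrho}(a,M)$ to exhibit a sum of three nonnegative terms equal to zero; you instead reduce the boundary value problem to the scalar equation $\Theta(K)=G(1)$ and use strict monotonicity and surjectivity of $\Theta$. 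Both arguments rest on the same monotonicity of $\mf p_\varrho(a,\cdot)$, but the energy argument is more robust: it needs neither $\sigma(u)$ bounded below nor a pointwise meaning for $\partial_t u_t$ and $\nabla u_t$ at the fixed time, which is relevant on $(0,\delta]$, where $u_t=\rho_t$ is only a weak solution of \eqref{10}; your shooting reduction, as you yourself note, requires $\epsilon\le u\le 1-\epsilon$ and enough spatial regularity of the data at the given time, so invoking it to force $H_t=0$ on $(0,\delta]$ is more delicate than the paper's integration-by-parts uniqueness.

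One small inaccuracy, which does not affect the argument: after the second integration, $H(t,x;K)=K+A(x)+\mf p_{\mf a}\bigl(v(-1),K\bigr)\,B(x)$ with $B(x)=\int_{-1}^x [2\sigma(v(y))]^{-1}dy\ge 0$, so $H(t,\cdot;K)$ is affine in $\mf p_{\mf a}(v(-1),K)$ but not in $K$ itself; in particular $K\mapsto H(t,1;K)$ is not affine. What you actually need, and what does hold, is that this map is strictly increasing and tends to $\pm\infty$ as $K\to\pm\infty$, since $\partial_K \mf p_{\mf a}(v(-1),K)>0$ when $0<\mf a<1$ and $\epsilon\le v(-1)\le 1-\epsilon$; with this replacement $\Theta$ is still continuous, strictly increasing and onto $\bb R$, and your existence, uniqueness and implicit-function-theorem steps go through unchanged.
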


\begin{proof}
The proof is straightforward. As $u$ follows the hydrodynamic equation
in the interval $]0,\delta]$, $H_t=0$ is a solution of \eqref{5-01b}
for $t$ in this interval. On the other hand, for $\delta\le t\le T$,
as $u$ is smooth and bounded away from $0$ and $1$, one can derive an
explicit expression for the solution $H_t$, and check that it is
smooth in space and time.

For uniqueness, consider two solutions $H^{(i)}_t$, $i=1$,
$2$. Subtract the bulk equations, multiply by the difference
$H^{(2)}_t - H^{(1)}_t$, integrate by parts, and replace at
the boundary the gradients of $H^{(i)}_t$ using the boundary
conditions. These computations yield that the sum of three positive
terms is equal to $0$, proving uniqueness. A detailed proof can be
found in Lemma 3.6.1 in \cite{these}.
\end{proof}

\end{document}